\newcommand{\ol}{\overline}
\newcommand{\A}{\mathbb A}
\newcommand{\C}{\mathbb C}
\newcommand{\N}{\mathbb N}
\newcommand{\R}{\mathbb R}
\newcommand{\Z}{\mathbb Z}
\newcommand{\G}{\mathbb G}
\newcommand{\0}{\textbf{0}}
\newcommand{\klk}{,\ldots,}
\newcommand{\cH}{\mathcal H}
\newcommand{\cN}{\mathcal N}
\newcommand{\cM}{{\mathcal{M}}}
\newcommand{\cU}{{\mathcal{U}}}
\newcommand{\cD}{{\mathcal{D}}}
\newcommand{\cS}{{\mathcal{S}}}
\newcommand{\cO}{{\mathcal{O}}}
\newcommand{\m}{\mathfrak{m}}
\newtheorem{claim}{Claim}
\newtheorem{theorem}{Theorem}
\newtheorem{corollary}[theorem]{Corollary}
\newtheorem{definition}[theorem]{Definition}
\newtheorem{example}[theorem]{Example}
\newtheorem{lemma}[theorem]{Lemma}
\newtheorem{proposition}[theorem]{Proposition}
\newtheorem{remark}[theorem]{Remark}
\newcommand{\boldface}[1]{{ \textbf{#1} }}
\begin{document}

\title{Quiz Games as a model for Information Hiding}
\author{Bernd Bank$^{1}$, Joos Heintz$^{2,3}$, Guillermo Matera$^{3,4}$,\\
Jos\'e Luis Monta\~na$^{5}$,
Luis M. Pardo$^{5}$, Andr\'es Rojas Paredes$^{2}$}

\footnotetext[0]{Research was partially supported by the following
Spanish and Argentinean grants: MTM2010-16051, MTM2014-55262-P, PIP
CONICET 11220130100598, PIO CONICET-UNGS 14420140100027, UNGS
30/3084 and UBACyT 20020130100433BA.}

\footnotetext[1]{Humboldt-Universit\"at zu Berlin, Institut f\"ur
Mathematik, 10099 Berlin, Germany. {\it Email address}: {\tt
bank@mathematik.hu-berlin.de}}

\footnotetext[2]{Departamento de Computaci\'on, Facultad de Ciencias
Exactas y Naturales, Universidad de Buenos Aires, Ciudad Univ., Pab.
I, 1428 Buenos Aires, Argentina. {\it Email address}: {\tt
\{joos,arojas\}@dc.uba.ar}}

\footnotetext[3]{National Council of Science and Technology
(CONICET), Ar\-gentina}

\footnotetext[4]{Instituto del Desarrollo Humano, Universidad
Nacional General Sarmiento, J. M. Guti\'errez 1150 (B1613GSX) Los
Polvorines, Provincia de Buenos Aires, Argentina. {\it Email
address}: {\tt gmatera@ungs.edu.ar}}

\footnotetext[5]{Departamento de Matem\'aticas, Estad\'{\i}stica y
Computaci\'on, Facultad de Ciencias, Universidad de Cantabria, 39071
Santander, Spain. {\it Email address}: {\tt joseluis.montana@unican.es, luis.m.pardo@gmail.com}}

%

\maketitle
%
%
\begin{abstract}
We present a general computation model inspired in the notion of
information hiding in software engineering. This model has the form
of a game which we call \emph{quiz game}. It allows in a uniform way
to prove exponential lower bounds for several complexity problems.
\end{abstract}
%
%
\section{Introduction}
We present a general computation model inspired in the notion of
information hiding in software engineering. This model has the form
of a game which we call \emph{quiz game}. It consists of a one round
two-party protocol between two agents, namely a \emph{quizmaster}
with limited and a {\emph{player} with unlimited computational
power. We suppose that the quizmaster is honest and able to answer
the player's questions. Using this model we are able to prove
exponential lower bounds for several complexity problems in a
uniform way, for example for the continuous interpolation of
multivariate polynomials of given circuit complexity (Theorem
\ref{theorem lower bound}). It is also possible to exhibit sequences
of families of multivariate polynomials which are easy to evaluate
such that the continuous interpolation of these polynomials, or
their derivatives or their indefinite integrals require an amount of
arithmetic operations which is exponential in their circuit
complexity (Theorem \ref{less technical theorem}). On the other
hand, we represent neural networks with polynomial activation
functions in our model and show that there is no continuous
algorithm able to learn relatively simple neural networks exactly
(Theorem \ref{neural-networks:teor}). Finally, we exhibit infinite
families of first--order formulae over $\mathbb{C}$ which can be
encoded in polynomial time and determine classes of univariate
parameterized elimination polynomials such that any representation
of these classes is of exponential size (Theorems
\ref{elimination-exponential-size:teor} and
\ref{characteristic-polynomial-exponential-size:teor}).

Ad hoc variants of the method we use and partial results already
appeared elsewhere (\cite{GiHeMaSo11}, \cite{HeKuRo}). What is
really new is the general framework which we develop to approach
these complexity results in order to prove (and generalize) them in
a uniform way.

The quiz games which constitute the core of our model admit two
``protocols'', an  ``exact'' and an ``approximative'' one. The exact
protocol aims to represent symbolic procedures for solving
parametric families of elimination problems and is first discussed
in the context of robust arithmetic circuits and neural networks.
The approximative protocol is able to deal with information of
approximative nature. It is motivated by the notion of an
approximative parameter instance which encodes a polynomial with
respect to an abstract data type. The main outcome is that there
exists an approximative parameter instance encoding a given
polynomial if and only if that polynomial belongs to the closure of
the corresponding abstract data type with respect to the Euclidean
topology.

The idea behind this computational model is to restrict the
information which quizmaster and player may interchange. This
reflects the concept of information hiding in software engineering
aimed to control and reduce the design complexity of a computer
program.

In the most simple case the notion of an exact quiz game protocol
may be explained roughly as follows. Suppose that there is given a
continuous data structure carrier together  with an abstraction
function which encodes a parameterized family of polynomials. The
quizmaster chooses from the data structure carrier a parameter which
encodes a specific polynomial and hides it to the player. The player
asks to the quizmaster questions about the hidden polynomial, whose
answers constitute a vector of complex values which depend only on
the polynomial itself and are independent of the hidden parameter.
The quizmaster sends this vector to the player and the player
computes a representation of the polynomial in an alternative data
structure carrier. Finally, the quizmaster tests whether this
alternative representation encodes the hidden polynomial. Observe
that polynomial interpolation is a typical situation that can be
formulated in such a way.

The paper constitutes a mixture between ideas and concepts coming
from software engineering, algebraic complexity theory and algebraic
geometry. A fundamental tool is an algebraic characterization of the
total maps whose graphs are first--order definable over $\mathbb{C}$
and continuous with respect to the Euclidean topology. We call these
maps \emph{constructible and geometrically robust} (see Theorem
\ref{theo continuos}).
%
%
\section{Concepts and tools from algebraic geometry}
In this section, we use freely standard notions and notations from
commutative algebra and algebraic geometry. These can be found for
example in \cite{Eisenbud95}, \cite{Kunz85}, \cite{Mumford88}  and
\cite{Shafa94}. In Section \ref{subsec: geom robust const maps} we
introduce the notions and definitions which constitute the
fundamental tool for our algorithmic models. Most of these notions
and their definitions are taken from \cite{GiHeMaSo11} and
\cite{HeKuRo}.
%
%
\subsection{Basic notions and notations}
Let $k$ be a fixed algebraically closed field of characteristic
zero. For any $n\in\N$, we denote by $\A^n(k)$ the $n$--dimensional
affine space $k^n$ equipped with its Zariski topology. For $k=\C$,
we consider the complex $n$--dimensional affine space
$\A^n:=\A^n(\C)$, equipped with its respective Zariski and Euclidean
topologies.

Let $X_1,\dots,X_n$ be indeterminates over $k$ and let
$X:=(X_1,\dots,X_n)$. We denote by $k[X]:=k[X_1,\dots,X_n]$ the ring
of polynomials in the variables $X$ with coefficients in $k$.

Let $V$ be a closed affine subvariety of $\A^n(k)$. We denote by
$I(V):=\{ f \in k[X]: f(x)=0 \text{\ for\ any\ } x \in V\}$ the
ideal of definition of $V$ in $k[X]$ and by $k[V]:=\{
\phi:V\rightarrow k: \text{\ there\ exists\ } f\in k[X] \text{\
with\ } \phi(x)=f(x) \text{\ for\ any\ } x \in V \}$ its coordinate
ring. The elements of $k[V]$ are called coordinate functions of $V$.
Observe that $k[V]$ is isomorphic to the quotient $k$--algebra
$k[X]/{I(V)}$. If $V$ is irreducible, then $k[V]$ has no zero
divisors, and we denote by $k(V)$ the field formed by the rational
functions of $V$ with maximal domain ($k(V)$ is called the rational
function field of $V$). Observe that $k(V)$ is isomorphic to the
fraction field of the integral domain $k[V]$.

Let $V$ and $W$ be closed affine subsets of $\A^n(k)$ and $\A^m(k)$,
respectively, and let $\Phi:V\rightarrow W$ be a (total) map. We
call $\Phi$ a \emph{morphism} from the affine variety $V$ to $W$ if
there exist polynomials $f_1,\dots,f_m\in k[X]$ such that
$\Phi(x)=(f_1(x),\dots,f_m(x))$ holds for any $x\in V$.

Let $V$ be irreducible, let $U$ be a nonempty Zariski open subset of
$V$ and let $\Phi:V\dashrightarrow W$ be a partial map with domain
$U$. Let $\Phi_1,\dots,\Phi_m$ be the components of $\Phi$. We call
$\Phi$ a \emph{rational map} from $V$ to $W$ if
$\Phi_1,\dots,\Phi_m$ are the restrictions to $U$ of suitable
rational functions of $V$. Observe that our definition of a rational
map differs slightly from the usual one in algebraic geometry, since
we do not require that the domain $U$ of $\Phi$ is maximal. Hence in
the case $m:=1$ our concepts of rational function and rational map
do not coincide. However we will not stick on this point and simply
speak about rational functions when $m=1$.
\begin{example}
Let $f:\A^2\dashrightarrow\A^1$ be the map defined as
$f(X_1,X_2):=\frac{X_1+X_2}{X_1-X_2}$ on
$U:=\A^2\setminus\{X_1^2-X_2^2=0\}$. Then $f$ may be considered as a
rational map in the sense above, but it is not a rational function
on $\A^2$.
\end{example}
%
%
\subsection{Geometrically robust constructible maps}
\label{subsec: geom robust const maps}
Let $\mathcal{M}$ be a subset of some affine space $\A^n(k)$ and,
for a given non-negative integer $m$, let
$\Phi:\mathcal{M}\dasharrow\A^m(k)$ be a partial map.

\begin{definition}
We call the set $\mathcal{M}$ constructible if $\mathcal{M}$ is
definable by a Boolean combination of polynomial equations from
$k[X]$, namely as a finite union of sets of solutions of equalities
and inequalities defined by elements of $k[X]$.
\end{definition}

Since the elementary theory of algebraically closed fields of
characteristic zero admits quantifier elimination (see, e.g.,
\cite{FrJa05}), constructible and definable sets in the first--order
logic over $k$ are exactly the same.

For a constructible subset $\mathcal{M}$ of $\A^n(k)$, we denote its
Zariski closure by $\overline{\mathcal{M}}$. For $k=\C$, the Zariski
closure a constructible subset $\mathcal{M}$ of $\A^n$ coincides
with its Euclidean closure (see, e.g., \cite[Chapter I, Section 10,
Corollary 1]{Mumford88}). Hence the notation $\ol{\mathcal{M}}$ for
the closure of $\mathcal{M}$ with respect to both topologies is
unambiguous.

A constructible subset $\mathcal{M}$ of $\A^n(k)$ is called
\emph{irreducible} if it cannot be written as a nontrivial union of
two subsets of $\mathcal{M}$ which are closed with respect to the
Zariski topology of $\mathcal{M}$. Each constructible subset
$\mathcal{M}$ of $\A^n(k)$ has a unique irredundant irreducible
decomposition as a finite union of irreducible, constructible
subsets of $\mathcal{M}$, which are closed in $\mathcal{M}$. These
subsets are called the \emph{irreducible components} of
$\mathcal{M}$.

\begin{definition}
We call the partial map $\Phi$ constructible if the graph of $\Phi$
is constructible as a subset of the affine space
$\A^n(k)\times\A^m(k)$.
\end{definition}

We say that the constructible map $\Phi$ is \emph{polynomial} if
$\Phi$ is the restriction to $\mathcal{M}$ of a morphism of affine
varieties $\A^n(k)\rightarrow \A^m(k)$. A polynomial map
$\Phi:\mathcal{M}\to\A^m(k)$ is everywhere defined on $\mathcal{M}$
and hence total. The constructible map $\Phi$ is \emph{rational} if
the intersection of its domain with any irreducible component
$\mathcal{N}$ of $\mathcal{M}$ is a nonempty open subset
$\mathcal{U}$ of the closure $\overline{\mathcal{N}}$ of
$\mathcal{N}$ in the Zariski topology of $\A^n(k)$ and the
restriction $\Phi|_{\mathcal{U}}$ of $\Phi$ to $\mathcal{U}$ is a
rational map of $\overline{\mathcal{N}}$. In this case $\mathcal{U}$
is a Zariski dense subset of $\mathcal{N}$.

\begin{remark}
\label{remark piecewise rational} A partial map
$\Phi:\mathcal{M}\dasharrow\A^m(k)$ is constructible if and only if
it is piecewise rational. If $\Phi$ is a total constructible map,
then there exists an open subset $\mathcal{U}$ of $\mathcal{M}$ with
nonempty intersection with any irreducible component of
$\mathcal{M}$ such that $\Phi|_\mathcal{U}$ is a rational map of
$\mathcal{M}$.
\end{remark}

The first statement follows from quantifier elimination, whereas the
second follows from \cite[Lemma 1]{GiHeMaSo11}.

Fix for the moment an irreducible constructible subset $\mathcal{M}$
of the affine space $\A^n(k)$ and a total constructible map
$\Phi:\mathcal{M}\rightarrow\A^m(k)$ with components
$\Phi_1,\dots,\Phi_m$. Observe that the closure
$\overline{\mathcal{M}}$ of $\mathcal{M}$ is an irreducible closed
affine subvariety of $\A^n(k)$ and that we may interpret
$k(\overline{\mathcal{M}})$ as a $k[\overline{\mathcal{M}}]$--module
(or $k[\overline{\mathcal{M}}]$--algebra). Fix now an arbitrary
point $x$ of $\overline{\mathcal{M}}$. By $\mathfrak{M}_x$ we denote
the maximal ideal of coordinate functions of
$\overline{\mathcal{M}}$ which vanish at the point $x$. By
$k[\overline{\mathcal{M}}]_{\mathfrak{M}_x}$ we denote the local
$k$--algebra of the variety $\overline{\mathcal{M}}$ at the point
$x$, i.e., the localization of $k[\overline{\mathcal{M}}]$ at the
maximal ideal $\mathfrak{M}_x$.

Following Remark \ref{remark piecewise rational} we may interpret
$\Phi_1,\dots,\Phi_m$ as rational functions of the irreducible
variety $\overline{\mathcal{M}}$ and therefore as elements of
$k(\overline{\mathcal{M}})$. Thus
$k[\overline{\mathcal{M}}]_{\mathfrak{M}_x}[\Phi_1,\dots,\Phi_m]$ is
a  $k$--subalgebra of $k(\overline{\mathcal{M}})$ which contains
$k[\overline{\mathcal{M}}]_{\mathfrak{M}_x}$.

\begin{definition}
\label{geom robust def}
Let $\mathcal{M}$ be a constructible subset of a suitable affine
space over $k$ and let $\Phi:\mathcal{M}\rightarrow\A^m(k)$ be a
total constructible map with components $\Phi_1,\dots,\Phi_m$. If
$\mathcal{M}$ is irreducible, then we call $\Phi$ geometrically
robust if for any point $x\in\mathcal{M}$ the following two
conditions are satisfied:
\begin{itemize}
    \item[$(i)$] $k[\overline{\mathcal{M}}]_{\mathfrak{M}_x}[\Phi_1,\dots,\Phi_m]$
    is a finite
    $k[\overline{\mathcal{M}}]_{\mathfrak{M}_x}$--module,
    \item[$(ii)$] $k[\overline{\mathcal{M}}]_{\mathfrak{M}_x}[\Phi_1,\dots,\Phi_m]$
    is a local $k[\overline{\mathcal{M}}]_{\mathfrak{M}_x}$--algebra whose maximal
    ideal is generated by $\mathfrak{M}_x$ and
    $\Phi_1-\Phi_1(x),\dots,\Phi_m-\Phi_m(x)$.
\end{itemize}
In the general case, let $\mathcal{N}_1,\dots,\mathcal{N}_s$ be the
irreducible components of $\mathcal{M}$. We call then $\Phi$
geometrically robust if the restrictions
$\Phi|_{\mathcal{N}_1},\dots,\Phi|_{\mathcal{N}_s}$ are
geometrically robust in the above sense.
\end{definition}

If $\Phi$ is a geometrically robust constructible map, we call
$\mathcal{M}$ the \emph{domain of definition} of $\Phi$. The
following statements  characterize the unique properties of
geometrically robust constructible maps which will be relevant in
the sequel.
\begin{theorem}
\label{theorem geometrically robust functions} ${}$
\begin{itemize}
    \item[$(i)$] The restriction of a geometrically robust constructible
    map to a constructible subset of its domain is geometrically robust.
    \item[$(ii)$] Compositions and cartesian products of geometrically
    robust con\-struc\-ti\-ble maps are geometrically robust.
    \item[$(iii)$] A geometrically robust constructible map
    defined on a normal (e.g., smooth) variety is a polynomial map.
    \item[$(iv)$] Polynomial maps are geometrically robust and constructible
    and the geometrically robust constructible functions of a constructible
    domain of definition form a $k$--algebra.
\end{itemize}
\end{theorem}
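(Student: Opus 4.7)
The plan is to dispatch the four assertions in the order (iv), (i)--(ii), (iii), because (iv) supplies the elementary examples on which the $k$--algebra claim rests, the stability assertions (i) and (ii) reduce to local-ring manipulations, and (iii) is the deepest statement and depends on the integral-closure characterisation of normality.

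I would first verify (iv). For a polynomial map $\Phi$, each component $\Phi_i$ already lies in $k[\ol{\cN}]$ on every irreducible component $\cN$ of $\cM$, so $k[\ol{\cN}]_{\mathfrak{M}_x}[\Phi_1,\dots,\Phi_m]=k[\ol{\cN}]_{\mathfrak{M}_x}$ and both conditions of Definition \ref{geom robust def} hold trivially. To deduce that the geometrically robust constructible functions form a $k$--algebra, I would appeal to (ii): sum, product, and scalar multiplication are polynomial (hence, by the first half of (iv), geometrically robust) maps, and composing them with the cartesian product $(\Phi,\Psi)$ realises $\Phi+\Psi$, $\Phi\cdot\Psi$ and $\lambda\Phi$ as geometrically robust.

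For (i) I would reduce to the irreducible case, noting that every irreducible component of a constructible subset $\cM'\subset\cM$ lies inside some irreducible component of $\cM$. Then the closed inclusion $\ol{\cM'}\subset\ol{\cM}$ induces a surjection $k[\ol{\cM}]\twoheadrightarrow k[\ol{\cM'}]$; after localising at the maximal ideal of a point $x\in\cM'$ and adjoining the restricted $\Phi_i$, the finiteness condition (i) and the maximal-ideal generation (ii) descend through this quotient. For (ii) I would treat the two cases separately. Cartesian products reduce to a tensor-product computation, since the local ring of $\ol{\cM}\times\ol{\cM'}$ at $(x,x')$ is a localisation of $k[\ol{\cM}]_{\mathfrak{M}_x}\otimes_k k[\ol{\cM'}]_{\mathfrak{M}'_{x'}}$, and the generators of the two maximal ideals assemble into generators of the product's maximal ideal. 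Compositions $\Psi\circ\Phi$ use transitivity of finite module extensions together with the observation that $\mathfrak{M}_{\Phi(x)}$ pulls back into the ideal of $k[\ol{\cM}]_{\mathfrak{M}_x}[\Phi_1,\dots,\Phi_m]$ generated by $\mathfrak{M}_x$ and the $\Phi_i-\Phi_i(x)$, so finiteness and maximal-ideal generation compose.

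The heart of the argument is (iii). Assume $\cM$ is an irreducible normal variety, so $\cM=\ol{\cM}$. Fix $x\in\cM$ and a component $\Phi_i\in k(\cM)$. Condition (i) states that $\Phi_i$ is integral over the local ring $k[\cM]_{\mathfrak{M}_x}$; by normality this ring is integrally closed in its fraction field $k(\cM)$, forcing $\Phi_i\in k[\cM]_{\mathfrak{M}_x}$. Since this holds for every $x\in\cM$ and $k[\cM]=\bigcap_{x\in\cM}k[\cM]_{\mathfrak{M}_x}$, we conclude $\Phi_i\in k[\cM]$, so each component is the restriction of a polynomial and $\Phi$ is a polynomial map.

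The main obstacle I anticipate is the bookkeeping in (i) and in the composition half of (ii): one has to track how the generators of the maximal ideal of the finite $k$--algebra $k[\ol{\cM}]_{\mathfrak{M}_x}[\Phi_1,\dots,\Phi_m]$ behave under restriction to a subvariety and along towers of finite algebras. The tools involved are standard (Nakayama's lemma and transitivity of finite modules), but one must repeatedly invoke Remark \ref{remark piecewise rational} to ensure that the restricted or composed maps are genuinely rational on the closures of the relevant irreducible components, so that the definition of geometric robustness applies in the first place.
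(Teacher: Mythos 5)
Your overall plan is sound and, as far as one can tell, consistent with the paper's intentions --- but note that the paper does not actually prove this theorem in-text: it cites \cite[Theorem 17 and Corollary 12]{GiHeMaSo11} for $(i)$ and $(iii)$ and simply declares $(ii)$, $(iv)$ immediate from Definition \ref{geom robust def}. Your argument for $(iii)$ is exactly the standard one (integrality over $k[\cM]_{\mathfrak{M}_x}$ plus integral closedness of the local rings, then $\bigcap_x k[\cM]_{\mathfrak{M}_x}=k[\cM]$) and is correct. Your observation that the $k$--algebra claim in $(iv)$ is really an application of $(ii)$ is also correct and a useful clarification of the word ``immediate''.

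Where the sketch is too optimistic is in $(i)$ and in the composition half of $(ii)$. For $(i)$, the phrase ``the finiteness condition and the maximal-ideal generation descend through this quotient'' hides a real difficulty: the components $\Phi_j$ of $\Phi$, viewed as rational functions of $\overline{\cM}$, may have $\overline{\cM'}$ entirely contained in their polar locus (think of $Y/X$ on the cusp $Y^2=X^3$ restricted to the origin --- the function is geometrically robust, yet is not regular at any point of the subvariety one is restricting to). In that case the element of $k(\overline{\cM'})$ that represents $\Phi_j|_{\cM'}$ is \emph{not} obtained by reducing $\Phi_j\in k(\overline{\cM})$ modulo the ideal of $\overline{\cM'}$, because $\Phi_j$ does not lie in the relevant local ring and the reduction is undefined; $\Phi_j|_{\cM'}$ is instead determined by the pointwise values of the total map, and one must argue separately (via continuity/Theorem \ref{theo continuos} over $\C$, or by a more delicate argument over general $k$, as in the cited \cite[Theorem 17]{GiHeMaSo11}) that the integral relation and the locality condition transfer. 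A similar care point occurs in the composition step of $(ii)$: finiteness of $k[\overline{\cM}]_{\mathfrak{M}_x}[\Psi_1\circ\Phi,\dots,\Psi_m\circ\Phi]$ follows cleanly from transitivity of integrality, but establishing that this ring is \emph{local} with the prescribed maximal-ideal generators requires more than ``observing that $\mathfrak{M}_{\Phi(x)}$ pulls back appropriately''; one must in particular first replace the intermediate domain by the actual image $\Phi(\cM)$ (using $(i)$ and Lemma \ref{lemma Phi}$(iii)$) so that $\Phi^*$ is a genuine field embedding, and then track that the pulled-back generators exhaust the maximal ideal of the (a priori only semi-local) finite extension. You flagged these bookkeeping issues yourself, which is honest, but as written the argument would not yet be a complete replacement for the cited references.
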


For a proof of the statements $(i)$ and $(iii)$ we refer to
\cite[Theorem 17 and Corollary 12]{GiHeMaSo11}, respectively. The
statements $(ii)$ and $(iv)$ are immediate consequences of
Definition \ref{geom robust def}.

With some extra effort one can also show that geometrically robust
constructible maps are continuous with respect to the Zariski
topologies of their domain and range spaces. We shall prove this
only in case $k:=\C$ (see Lemma \ref{lemma Phi}$(ii)$ in Appendix
\ref{section: futher facts on geom rob maps}). Furthermore, we have
the following result (for a proof, see \cite[Theorem 4]{HeKuRo}).
\begin{theorem}
\label{theo continuos} Let $\mathcal{M}$ be a constructible subset
of $\A^n$ and let $\Phi:\mathcal{M}\rightarrow\A^m$ be a
constructible total map. Then $\Phi$ is geometrically robust if and
only if $\Phi$ is continuous with respect to the Euclidean
topologies of $\mathcal{M}$ and $\A^n$.
\end{theorem}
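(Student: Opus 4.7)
My plan is to treat each direction separately, after reducing to the case that $\mathcal{M}$ is irreducible (both properties pass to irreducible components). Let $V:=\overline{\mathcal{M}}$, let $G:=\{(x,\Phi(x)):x\in\mathcal{M}\}\subset V\times\A^m$, and let $\overline{G}$ be its Zariski closure in $V\times\A^m$. Since $\Phi$ is a constructible total map, $\overline{G}$ is an irreducible closed affine subvariety of $V\times\A^m$, birational to $V$ via the first projection $\pi_1$, and its coordinate ring equals $k[V][\Phi_1,\ldots,\Phi_m]$.

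For the implication ``geometrically robust $\Rightarrow$ Euclidean continuous'', I would fix $x\in\mathcal{M}$ and translate Definition \ref{geom robust def} into geometric information about $\pi_1$. Condition $(i)$ says that $k[V][\Phi_1,\ldots,\Phi_m]\otimes_{k[V]}k[V]_{\mathfrak{M}_x}$ is a finite $k[V]_{\mathfrak{M}_x}$-module, which after clearing denominators implies that $\pi_1:\overline{G}\to V$ is a finite morphism over some principal Zariski open neighborhood of $x$ in $V$. Condition $(ii)$ then forces $\pi_1^{-1}(x)=\{(x,\Phi(x))\}$ set-theoretically. A finite morphism of complex algebraic varieties is proper in the Euclidean topology; hence for any sequence $y_n\to x$ in $\mathcal{M}$, the sequence $(y_n,\Phi(y_n))\in\overline{G}$ lies eventually in a Euclidean-compact neighborhood of $(x,\Phi(x))$, and all its accumulation points are forced into $\pi_1^{-1}(x)=\{(x,\Phi(x))\}$. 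Therefore $\Phi(y_n)\to\Phi(x)$, establishing continuity.

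For the converse, assume $\Phi$ is Euclidean continuous and fix $x\in\mathcal{M}$. Any point $(x,z)\in\overline{G}$ is a Euclidean limit of a sequence $(y_n,\Phi(y_n))$ with $y_n\in\mathcal{M}$, because for constructible subsets of $\A^N$ the Zariski and Euclidean closures agree; continuity then yields $z=\Phi(x)$. Now consider the closure $\overline{G}'$ of $G$ in $V\times\mathbb{P}^m$: the same argument shows that the fiber of the proper projection $p:\overline{G}'\to V$ over $x$ contains no point at infinity, since such a point would force $\|\Phi(y_n)\|\to\infty$ along some $y_n\to x$. Hence $p^{-1}(x)=\{(x,\Phi(x))\}$; by properness, the image in $V$ of $\overline{G}'\setminus(V\times\A^m)$ is Zariski closed and misses $x$, and by semicontinuity of fiber dimension there is a Zariski open $U\ni x$ over which $\overline{G}'$ coincides with $\overline{G}$ and $p$ is quasi-finite; being also proper, it is finite over $U$ by Zariski's main theorem. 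Localizing at $\mathfrak{M}_x$ yields condition $(i)$; the uniqueness of the fiber point makes the resulting finite $k[V]_{\mathfrak{M}_x}$-algebra local, with maximal ideal manifestly generated by $\mathfrak{M}_x$ and the elements $\Phi_i-\Phi_i(x)$, which gives $(ii)$.

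The main obstacle is the properness step in the converse: one must argue carefully that Euclidean continuity forces finiteness (rather than merely quasi-finiteness at $x$) of $\pi_1$ over a Zariski neighborhood of $x$. This rests on the interplay between Zariski and Euclidean closures for constructible sets and on Zariski's main theorem; once in place, conditions $(i)$ and $(ii)$ of Definition \ref{geom robust def} follow by standard bookkeeping on localizations of $k[\overline{G}]$.
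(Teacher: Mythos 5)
The paper does not prove Theorem~\ref{theo continuos}; it is quoted with a pointer to \cite[Theorem~4]{HeKuRo}, so there is no internal proof to compare against. Judged on its own, your argument is essentially sound and reproduces the right picture (the graph variety, finiteness of the first projection, properness via $\mathbb{P}^m$, and Zariski's Main Theorem), but there is one step you state as automatic that is not, and it is load-bearing.

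You assert at the outset that $\overline{G}$, the Zariski closure of the graph of $\Phi$, is irreducible, birational to $V$ via $\pi_1$, with $k[\overline{G}]=k[V][\Phi_1,\dots,\Phi_m]$. For an arbitrary total constructible map this is false: take $\mathcal{M}=\A^1$, $\Phi(x)=0$ for $x\neq 0$ and $\Phi(0)=1$; then $\overline{G}$ has an extra isolated point and is reducible, and its coordinate ring is not the subring of $k(V)$ generated by $\Phi_1$. What \emph{is} automatic (Remark~\ref{remark piecewise rational}) is that there is a dense Zariski open $\mathcal{U}\subset\mathcal{M}$ on which $\Phi$ agrees with a rational map; set $W:=\overline{G_{\mathcal{U}}}$. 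This $W$ is the irreducible variety with $k[W]=k[V][\Phi_1,\dots,\Phi_m]$ that your localization bookkeeping actually refers to. You then need to check that $\overline{G}=W$ under each of the two hypotheses. Under robustness this follows because condition~$(ii)$ at each $x$ forces $(x,\Phi(x))$ to be a point of $W$ (the ideal generated by $\mathfrak{M}_x$ and $\Phi_i-\Phi_i(x)$ must be proper, so this point lies on $W$), hence $G\subseteq W$ and $\overline{G}=W$. Under continuity it follows because for $x\in\mathcal{M}\setminus\mathcal{U}$ one can approximate $x$ by points of $\mathcal{U}$, and continuity pushes $(x,\Phi(x))$ into $W$. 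With this inserted, the rest of your proof — finiteness of $\pi_1$ over a principal open from condition~$(i)$, the fiber identification from condition~$(ii)$, Euclidean properness of finite morphisms, and in the converse the closure in $V\times\mathbb{P}^m$, absence of points at infinity over $x$, semicontinuity of fiber dimension, and ``proper $+$ quasi-finite $\Rightarrow$ finite'' — goes through as you describe, and your reduction to irreducible $\mathcal{M}$ is legitimate since the components are Euclidean closed and finite in number. In short: not a wrong approach, but the irreducibility and coordinate-ring claim for $\overline{G}$ is a genuine gap you should close by working with $W$ and then proving $\overline{G}=W$ under each hypothesis.
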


Theorem \ref{theo continuos} gives a topological characterization of
the notion of geometrically robust constructible maps over $\C$.
This notion represents the real motivation of Definition \ref{geom
robust def}, both from a geometric as well as from an algorithmic
point of view.

In Appendix \ref{section: futher facts on geom rob maps} we
establish further facts on geometrically robust constructible maps
which will be needed in the sequel.
%
%
\section{The computation model}
In this section we present our computation model. It will be
expressed in terms of framed abstract data type carriers, framed
data structures and abstraction functions. These notions are first
informally discussed in the context of robust arithmetic circuits
and neural networks, and then in a general setting. Then we
introduce our model, which has the form of a game ---a quiz game---
and aims to represent the notion of information hiding in software
engineering. We shall present two ``protocols'' of a quiz game: an
``exact'' and an ``approximative'' one. In this section we consider
only the exact protocol, while the somewhat more subtle
approximative protocol is discussed in the next section.
%
%
\subsection{Robust arithmetic circuits}
\label{subsec: robust arithmetic circuits} Let us fix natural
numbers $r$ and $n$, indeterminates $X_1,\dots,X_n$ and a non--empty
constructible subset $\mathcal{M}$ of $\A^r$. A \emph{robust
arithmetic circuit} (with parameter domain $\mathcal{M}$ and inputs
$X_1,\dots,X_n$) is a labeled directed acyclic graph (labeled DAG)
$\beta$ satisfying the following conditions: each node of indegree
zero is labeled by a robust constructible function (called parameter
of $\beta$) with domain of definition $\mathcal{M}$ or by a variable
$X_1,\dots,X_n$ (called inputs of $\beta$). All other nodes of
$\beta$ have indegree two and are called \emph{internal}. They are
labeled by the arithmetic operations addition, subtraction or
multiplication. Moreover, exactly one node of $\beta$ becomes
labeled as output. We call the number of nodes the \emph{size} of
$\beta$.

We consider $\beta$ as a syntactic object which we think equipped
with the following semantics. There exists a canonical evaluation
procedure of $\beta$ assigning to each node a geometrically robust
constructible function with domain of definition
$\mathcal{M}\times\A^n$ which, in case of a parameter node, may also
be interpreted as a geometrically robust function with domain of
definition $\mathcal{M}$. In either situation, we call such a
function an \emph{intermediate result} of $\beta$. The intermediate
result associated with the output node will be called the
\emph{final result} of $\beta$. We refer to $\mathcal{M}$ as the
\emph{parameter domain} of $\beta$.

By definition, $\beta$ does not contain divisions involving the
inputs. Divisions may appear, only implicitly, in the construction
of the parameters of $\beta$. In this sense $\beta$ is
\emph{essentially division--free}. Therefore all intermediate
results of $\beta$ are polynomials in $X_1,\dots,X_n$ over the
$\C$--algebra of geometrically robust constructible functions with
domain of definition $\mathcal{M}$.

We may consider $\beta$ as a program which solves the problem to
evaluate, for each $u\in\mathcal{M}$, the polynomial function
$\A^n\rightarrow\A^1$ which we obtain by specializing to the point
$u$ the first argument of the geometrically robust constructible
function $\mathcal{M}\times \A^n\rightarrow \A^1$ given by the final
result of $\beta$. In this sense $\beta$ defines an
\emph{abstraction function} $\theta$ which assigns to each parameter
instance $u\in\mathcal{M}$ a polynomial function from $\A^n$ into
$\A^1$. These polynomial functions constitute a (unary)
\emph{abstract data type carrier} $\mathcal{O}$ which is contained
in a finite dimensional $\C$--vector subspace of $\C[X_1,\dots,X_n]$
and forms there a constructible subset. We call therefore this
carrier \emph{framed}. The abstraction function
$\theta:\mathcal{M}\rightarrow\mathcal{O}$ is geometrically robust
and constructible. The constructible parameter domain $\mathcal{M}$
constitutes a \emph{framed data structure} which represents the
elements of $\mathcal{O}$ by means of the geometrically robust
constructible map $\theta$.

Suppose that $\beta$ contains $K$ parameter nodes. The corresponding
parameters of $\beta$ realize a geometrically robust constructible
map $\mu:\mathcal{M}\rightarrow\A^K$ with constructible image
$\mathcal{N}$. Since the circuit $\beta$ is essentially
division--free, there exists a polynomial map
$\omega:\mathcal{N}\rightarrow\mathcal{O}$ such that
$\theta=\omega\circ\mu$ holds.

In Section \ref{subsec: data structures} we are going to axiomatize
this situation and to define precisely what we mean by the up to now
informal notions of framed abstract data type carrier, framed data
structure and abstraction function.  For more details about robust
arithmetic circuits and their motivations we refer to \cite{HeKuRo}.

Particular instances of robust arithmetic circuits are those whose
parameter domain are affine spaces. In this case all parameters are
polynomials (see Theorem \ref{theorem geometrically robust
functions}$(iii)$). These circuits represent abstraction functions
which encode polynomial functions defined on affine spaces by means
of \emph{ordinary division--free arithmetic circuits} (see
\cite{HeKuRo} for this terminology).

Let $L,n \in\N$ and let $\mathcal{O}_{L,n}$ be the family of all
polynomials of $\C[X_1,\dots,X_n]$ which can be evaluated by an
ordinary division--free arithmetic circuit with at most $L$
\emph{essential} multiplications, involving the input variables
$X_1,\dots,X_n$ meanwhile $\C$--linear operations are free. Then
$\mathcal{O}_{L,n}$ is an abstract data type whose abstraction
function is represented by a robust arithmetic circuit with
parameter domain $\A^{(L+n+1)^2}$ (see \cite[Exercise
9.18]{Burgisser97}). We call this arithmetic circuit the
\emph{generic computation} of all $n$--variate polynomials which can
be evaluated with at most $L$ essential multiplications.
%
%
\subsection{Neural networks with polynomial activation functions}
\label{subsec: neural networks}
In this section we will freely use well--established terminology on
neural networks (see, e.g., \cite{Hertz91}, \cite{Haykin} or
\cite{Hagan}). Let be given a neural network architecture with  $n$
inputs $X_1,\dots,X_n$ and one output and let $r$ be the length of
the corresponding weight vector. We suppose that all activation
functions are given by univariate polynomials over $\R$. Observe
that each weight vector instance $w\in\R^r$ defines a neural network
of this architecture and thus a polynomial target function from
$\R^n$ into $\R$. The dependency weight vector instance--target
function is itself polynomial and can be extended to $\A^r$. The
complex target functions we obtain in this way constitute a (unary)
abstract data type carrier $\mathcal{O}$ which is contained in a
finite dimensional $\C$--vector subspace of $\C[X_1,\dots,X_n]$ and
forms there a constructible subset. In this sense, the abstract data
type carrier $\mathcal{O}$ is again framed. Thus we obtain for
$\mathcal{M}:=\A^r$ a surjective polynomial map
$\theta:\mathcal{M}\rightarrow\mathcal{O}$ which may be interpreted
as an abstraction function which represents the elements of
$\mathcal{O}$ by means of complex weight vector instances belonging
to $\mathcal{M}$. In this meaning $\mathcal{M}$ constitutes a framed
data structure. Since $\theta$ is polynomial, we may obviously write
it as a composition of a polynomial and a geometrically robust
constructible map. Hence we shall interpret again $\theta$ as an
abstraction function.
%
%
\subsection{Framed abstract data type carriers, framed data structures
and abstraction functions} \label{subsec: data structures}
Now we define in a general context the notions of framed abstract
data type carrier, framed data structure and abstraction function.
Let $(X_n)_{n\in\N}$ be a sequence of indeterminates over $\C$ and
let
\[
\mathcal{R} := \bigcup_{n\in\N} \C[X_1,\dots,X_n].
\]
A {\it framed (unary) abstract data type carrier of polynomials} is
a constructible subset of a finite--dimensional $\C$--vector space
contained in $\mathcal{R}$. In the following we shall only refer to
unary abstract data type carriers and omit the expression ``unary''.
Only at the end of this section we shall briefly mention $r$--ary
abstract data type carriers for arbitrary $r\in\N$.

By \emph{framed data structures} we refer to constructible subsets
of suitable affine ambient spaces over $\C$. For a framed data
structure $\mathcal{M}$, the {\it size} of $\mathcal{M}$ is the
dimension of its ambient space.

Let $\mathcal{O}$ be a framed abstract data type carrier of
polynomials, $\mathcal{M}$ and $\mathcal{N}$ framed data structures,
$\mu:\mathcal{M}\to \mathcal{N}$ a geometrically robust
constructible map and $\omega:\mathcal{N} \to \mathcal{O}$ a
polynomial map such that the geometrically robust constructible map
$\theta:=\omega \circ \mu$ sends $\mathcal{M}$ onto $\mathcal{O}$.
The situation may be depicted by the following commutative diagram:
\[
\xymatrix{
\cO     &  \\
\cM \ar[r]_{\mu}  \ar[u]^{\theta}    & \cN \ar[ul]_{\omega}}
\]
We call $\theta$ the {\it abstraction function} associated with
$\mu$ and $\omega$. The maximal size of $\mathcal{M}$ and
$\mathcal{N}$ is called the \emph{size} of $\theta$. Observe that
the topological closures of $\mathcal{O}$ and $\mathcal{N}$ are well
defined with respect to the Zariski and Euclidean topologies and
coincide. We denote them by $\ol{\mathcal{O}}$ and
$\ol{\mathcal{N}}$. The polynomial map $\omega$ sends
$\ol{\mathcal{N}}$ into $\ol{\mathcal{O}}$.

The idea behind our notion of abstraction function is the following.
The specification language we use to speak about $\mathcal{R}$
consists of constants from $\C$, the arithmetic operations addition,
subtraction and multiplication, and equality. Thus a framed abstract
data type carrier $\mathcal{O}$ of polynomials can always (not
necessarily efficiently) be described by a formula or a
division-free arithmetic circuit which depends on a suitable framed
data structure $\mathcal{N}$ of parameters. The coefficient-wise
representation of the elements of $\mathcal{O}$ defines  a
surjective polynomial map $\omega:\mathcal{N} \to \mathcal{O}$.

In the case of the representation of polynomials by essentially
division--free, robust arithmetic circuits described in Section
\ref{subsec: robust arithmetic circuits}, the size of $\theta$ is
evidently a lower bound for the DAG size of the circuit $\beta$.
This example shows that also in the general case the size of
$\theta$ is a reasonable measure for the representation complexity
of the elements of $\mathcal{O}$ by means of $\theta$.

We allow now that $\mathcal{N}$ becomes \emph{re--parameterized} by
a geometrically robust constructible map $\mu$ with domain of
definition $\mathcal{M}$ and with $\mu(\mathcal{M})\subset
\mathcal{N}$ such that the composite map $\theta=\omega \circ \mu$
sends $\mathcal{M}$ onto $\mathcal{O}$. In this sense, our notion of
abstraction function for framed abstract data type carriers of
polynomials is absolutely natural and contains as first class
citizens the representation of polynomial families by means of
robust arithmetic circuits.

All we have said before (and we shall say in the sequel) about
framed \emph{unary} abstract data type carriers may be applied,
mutatis mutandis, to the case where the ring $\mathcal{R}$ is
replaced by its cartesian product $\mathcal{R}^r$, for $r\in\N$. If
this occurs, we speak about \emph{framed $r$--ary abstract data type
carriers}.
%
%
\subsubsection{Identification sequences}
Let be given two framed abstract data type carriers $\mathcal{O}_1$
and $\mathcal{O}_2$, two framed data structures $\mathcal{N}_1$ and
$\mathcal{N}_2$ and two surjective polynomial maps
$\omega_1:\mathcal{N}_1\rightarrow\mathcal{O}_1$ and
$\omega_2:\mathcal{N}_2\rightarrow\mathcal{O}_2$. Then the number of
variables and the degrees of the polynomials occurring in
$\mathcal{O}_1$ and $\mathcal{O}_2$ are bounded. Therefore we may
assume without loss of generality that the elements of
$\mathcal{O}_1$ and $\mathcal{O}_2$ are polynomials of
$\C[X_1,\dots,X_n]$ of degree bounded by a fixed integer parameter
$\Delta \geq 2$. Let $L$ be the size of the framed data structure
$\mathcal{N}_1\times \mathcal{N}_2$ and suppose that there exist a
quantifier--free first--order formula over $\C$ which defines
$\mathcal{N}_1\times\mathcal{N}_2$ involving $K$ polynomial
equations of degree at most $\Delta$ in $L$ variables. Taking into
account that there are at most $\Delta^L$ such equations which are
linearly--independent over $\C$, we may assume without loss of
generality $K \leq \Delta^L$. Finally we assume that the degree of
the polynomials defining $\omega_1\times\omega_2:
\mathcal{N}_1\times\mathcal{N}_2 \rightarrow
\mathcal{O}_1\times\mathcal{O}_2$ is bounded by $\Delta$.

For $m\in\N$, we call $(\gamma_1,\dots,\gamma_m)\in(\A^n)^m$ an
\emph{identification sequence} for $\mathcal{O}_1\times
\mathcal{O}_2$ if the equalities
$f(\gamma_1)=g(\gamma_1),\dots,f(\gamma_m)=g(\gamma_m)$ imply $f=g$
for any $f\in\ol{\mathcal{O}_1}$, $g\in\ol{\mathcal{O}_2}$.

The next statement assures that there exist for $\mathcal{O}_1\times
\mathcal{O}_2$ many integer identification sequences of small length
$m:=4L+2$ and of small bit size $O(L \text{\ log\ }\Delta)$ which
may be chosen randomly. For the proof of this result, we refer to
\cite[Lemma 4 and Corollary 1]{CaGiHeMaPa03}.
\begin{proposition}
\label{propo M} Let notations and assumptions be as before. Let $M$
be a finite subset of $\A^1$. Suppose that the cardinality $\# M$ of
$M$ satisfies the estimate $\# M \geq \Delta^3
(1+L)^{\frac{1}{L}}(1+ K \Delta)$ and let be given an integer $m
\geq 4L+2$. Then there exist points $\gamma_1,\dots,\gamma_m$ of
$M^n$ such that $(\gamma_1,\dots,\gamma_m)$ forms an identification
sequence for $\mathcal{O}_1\times \mathcal{O}_2$.

Suppose that the points of the finite set $M^n$ are equidistributed.
Then the probability of finding by a random choice in $(M^n)^m$ an
identification sequence is at least $1-\frac{1}{\# M} \geq
\frac{1}{2}$.
\end{proposition}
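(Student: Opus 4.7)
The plan is to translate the identification-sequence property into a geometric statement on the parameter variety $\mathcal{W} := \overline{\mathcal{N}_1\times\mathcal{N}_2} \subseteq \A^L$, and then apply a Bezout/Schwartz--Zippel estimate. Introduce the difference polynomial
\[
H(U,V;X) := \omega_1(U)(X) - \omega_2(V)(X),
\]
which by hypothesis has degree at most $\Delta$ in $(U,V)$ and in $X$. Since $\omega_1\times\omega_2$ is polynomial and surjective onto $\mathcal{O}_1\times\mathcal{O}_2$, its continuous extension to $\overline{\mathcal{N}_1}\times\overline{\mathcal{N}_2}$ surjects onto $\ol{\mathcal{O}_1}\times\ol{\mathcal{O}_2}$. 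Consequently $(\gamma_1,\dots,\gamma_m)$ is an identification sequence precisely when
\[
\bigcap_{i=1}^m \{(u,v)\in\mathcal{W}:H(u,v;\gamma_i)=0\}\subseteq \mathcal{W}^{=},
\]
where $\mathcal{W}^{=} := \{(u,v)\in\mathcal{W} : \omega_1(u)=\omega_2(v) \text{ in }\C[X_1,\dots,X_n]\}$.

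Next I would bound the degree of $\mathcal{W}$ by applying the refined Bezout inequality (in the style of Heintz) to the $K$ defining equations of $\mathcal{N}_1\times\mathcal{N}_2$, all of degree at most $\Delta$, in the $L$--dimensional ambient space; together with the global bound $K\leq\Delta^L$ this yields a bound on $\deg\mathcal{W}$ compatible with the factor $\Delta^3(1+L)^{1/L}(1+K\Delta)$ appearing in the hypothesis on $\# M$. Then the core geometric step is that $H$ restricted to any irreducible component $\mathcal{C}$ of $\mathcal{W}$ not contained in $\mathcal{W}^{=}$ is a nonzero element of $\C[\mathcal{C}][X]$, so that the hypersurface $Z_\gamma:=\{H(\cdot,\cdot;\gamma)=0\}$ meets $\mathcal{C}$ in a proper closed subvariety for Zariski-generic $\gamma\in\A^n$.

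With this in hand I would run an inductive Schwartz--Zippel argument on successive draws $\gamma_j\in M^n$. At step $j$ the \emph{bad} locus $\mathcal{B}_{j-1}:=\ol{\bigcap_{i<j}Z_{\gamma_i}\cap(\mathcal{W}\setminus\mathcal{W}^{=})}$ is a constructible set whose degree is controlled inductively by the refined Bezout bound, and a Schwartz--Zippel estimate shows that the probability of a uniformly random $\gamma_j\in M^n$ failing to strictly drop $\dim\mathcal{B}_{j-1}$ is at most $(\deg\mathcal{B}_{j-1}\cdot n\Delta)/\#M$. Since $\dim\mathcal{W}\leq L$, at most $L$ successful cuts suffice to exhaust the bad locus, and the choice $m=4L+2$ provides enough slack for the total failure probability to telescope to the stated $1/\#M$. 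The pure existence statement then follows from strict positivity of this bound.

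The main obstacle is the degree bookkeeping. Matching the precise factor $\Delta^3(1+L)^{1/L}(1+K\Delta)$ demands the \emph{refined} form of the Bezout inequality, which accounts separately for the ambient dimension, the number $K$ of defining equations, and their degree $\Delta$, rather than the classical product bound $\Delta^L$; in addition, a careful choice of induction hypothesis is needed so that the degrees $\deg\mathcal{B}_j$ remain under control across all $m=4L+2$ rounds. Once that numerical bookkeeping is in place, the remaining steps --- the geometric reduction to $\mathcal{W}^{=}$, the component-wise nonvanishing of $H$, and the Schwartz--Zippel estimate --- are standard.
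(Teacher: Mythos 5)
The paper gives no proof of Proposition \ref{propo M}; it simply cites \cite[Lemma 4 and Corollary 1]{CaGiHeMaPa03}, so there is no in-paper argument to match against. Your sketch, however, has a genuine gap at its first step. You assert that because $\omega_1\times\omega_2$ is polynomial and surjective from $\mathcal{N}_1\times\mathcal{N}_2$ onto $\mathcal{O}_1\times\mathcal{O}_2$, its extension to $\ol{\mathcal{N}_1}\times\ol{\mathcal{N}_2}$ surjects onto $\ol{\mathcal{O}_1}\times\ol{\mathcal{O}_2}$, and you use this to recast the identification-sequence property as the inclusion $\bigcap_{i=1}^m Z_{\gamma_i}\cap\mathcal{W}\subseteq\mathcal{W}^{=}$. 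Affine polynomial maps are not closed, and the image of a Zariski closure is in general only dense in, not equal to, the closure of the image. For instance the map $\omega(u,v):=uX+uv$ on $\A^2$ has image $\{aX+b: a\neq 0\}\cup\{0\}$, whose Zariski closure is the whole space $\{aX+b\}$, yet the image itself misses the constant polynomial $1$. Since the identification-sequence condition quantifies over \emph{all} $f\in\ol{\mathcal{O}_1}$, $g\in\ol{\mathcal{O}_2}$, a ``bad'' pair $(f,g)$ with $f\neq g$ but $f(\gamma_i)=g(\gamma_i)$ for all $i$ may lie entirely off the image of $\mathcal{W}$ under $\omega_1\times\omega_2$, and a Schwartz--Zippel argument performed on $\mathcal{W}$ is blind to it. Thus your asserted equivalence is only a one-way implication, and in the wrong direction for what the proof needs.

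The standard repair, and what the cited reference effectively does, is to run the degree bound and the Schwartz--Zippel estimate on the image variety $\ol{\mathcal{O}_1\times\mathcal{O}_2}$ directly, bounding its degree by pushing the Bezout estimate for $\ol{\mathcal{N}_1\times\mathcal{N}_2}$ (from the $K$ equations of degree $\leq\Delta$ in $L$ variables) through the degree-$\Delta$ polynomial map $\omega_1\times\omega_2$, for example via the Zariski closure of its graph, whose degree dominates that of the image. Once the argument lives on the image side, the remaining steps you outline --- restricting the difference polynomial to components lying outside the diagonal, iterating Schwartz--Zippel cuts over $m=4L+2$ rounds, and telescoping the failure probability to $1/\#M$ --- follow the right track, subject to the degree bookkeeping you yourself flag as the delicate point.
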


Suppose that for any $(v_1,v_2) \in \ol{\mathcal{N}_1} \times
\ol{\mathcal{N}_2}$ and any $\xi\in\A^n$ we are able to evaluate
$\omega_1(v_1)(\xi)$ and $\omega_2(v_2)(\xi)$ efficiently, i.e.,
using a number of arithmetic operations in $\C$ which is polynomial
in $L$ (this occurs when families of polynomials are represented by
robust arithmetic circuits or by neural network architectures as in
Sections \ref{subsec: robust arithmetic circuits} and \ref{subsec:
neural networks}). Then we may set up an algebraic computation tree
of size polynomial in $L$ which for any pair
$(v_1,v_2)\in\ol{\mathcal{N}_1}\times\ol{\mathcal{N}_2}$ decides
whether $\omega_1(v_1)=\omega_2(v_2)$ holds.

We are now going to axiomatize this situation as follows. Let $I$ be
an index set and $\cH:=\{(\theta_i,\mu_i,\omega_i)\}_{i\in I}$ be a
collection of abstraction functions $\theta_i:\cM_i \to \cO_i$
associated with geometrically robust constructible and polynomial
maps $\mu_i:\cM_i \to \cN_i$ and $\omega_i:\cN_i\to \cO_i$,
respectively. We call the collection $\cH$ {\it compatible} if for
any $i,j\in I$ there is given an algebraic computation tree of depth
polynomial in the sizes of $\theta_i$ and $\theta_j$ which for any
pair $(v_i,v_j)\in \ol{\cN_i}\times \ol{\cN_j}$ decides whether
$\omega_i(v_i)=\omega_j(v_j)$ holds.
%
%
\subsection{Quiz games}
We now exclusively consider framed unary abstract data type
carriers. We are going to model mathematically, in the geometric
context of this paper, the informal notion of \emph{information
hiding} in software engineering. For this purpose we present two
games\footnote{We remark we are not using the word ``game'' in the
sense of game theory, but rather in the informal sense of the
popular guessing game ``I spy''.}, the first one in an exact and the
second one in an approximate setting.

Let be given two abstraction functions $\theta:\cM\to\cO$,
$\theta':\cM\to\cO'$ as before, associated with geometrically robust
constructible maps $\mu:\cM\to\cN$, $\mu':\cM\to\cN'$, and
polynomial maps $\omega:\cN\to\cO$, $\omega':\cN'\to\cO'$, namely
$\theta=\omega\circ\mu$ and $\theta'=\omega'\circ\mu'$. Suppose that
$(\theta',\mu,\omega)$ and $(\theta',\mu',\omega')$ belong to a
compatible collection $\mathcal{H}$. Suppose further that there
exists a geometrically robust constructible map $\tau:\cO\to \cO'$
such that $\tau \circ \theta= \theta'$ holds. The following
commutative diagram of geometrically robust constructible maps
summarizes this situation:
\[
\xymatrix{
 & \cO \ar[r]^{\tau} & \mathcal{O}' \\
\mathcal{N} \ar[ur]^{\omega} &  \mathcal{M} \ar[l]^{\mu} \ar[r]_{\mu'} \ar[ur]^{\theta'} \ar[u]^{\theta} & \mathcal{N}' \ar[u]_{\omega'}
}
\]
We shall define two variants of a two--party protocol called the
\emph{exact} and the \emph{approximative quiz game}. The agents in
these protocols are called quizmaster and player. We suppose that
the \emph{quizmaster} has limited and the \emph{player} unlimited
computational power and that the quizmaster is honest and able to
answer the player's questions. The quizmaster is able to evaluate
$\mu'$ whereas the player can compute all occurring functions.
%
%
\subsubsection{The protocol of the exact quiz game}
\label{subsubsec: exact game}
The quizmaster chooses $u\in \cM$ and hides it from the player. The
player asks the quizmaster's questions about the polynomial
$\theta(u)$ whose answers are tuples of complex numbers which depend
only on $\theta(u)$ but not directly on $u$. The quizmaster's
answers to the player's questions are represented by a geometrically
robust map $\widetilde{\sigma}$ with domain of definition $\cO$.
Thus the arguments of $\widetilde{\sigma}$ belong to the affine
ambient space of $\mathcal{O}$.

In the aim to avoid the modeling of branchings, all the questions
and their answers become realized by a single round. In other words,
the game is not cooperative because the player's questions do not
depend on previous answers of the quizmaster.

Let $\sigma:=\widetilde{\sigma}\circ \theta$ and
$\cM^*:=\widetilde{\sigma}(\cO)=\sigma(\cM)$. We suppose now that
there is given a framed abstract data type carrier $\cO^*$ and an
abstraction function $\theta^*:\cM^*\to\cO^*$, associated with a
geometrically robust map $\mu^*:\cM^*\to\cN^*$ and a polynomial map
$\omega^*:\cN^*\to\cO^*$, from the compatible collection
$\mathcal{H}$, such that $\theta^*=\omega^*\circ\mu^*$. Moreover, we
suppose that the quizmaster is able to evaluate
$\sigma=\widetilde{\sigma}\circ \theta$.

The quizmaster computes $\sigma(u)$ and sends it to the player who
computes now $v^*:=(\mu^*\circ \sigma)(u)$. The player sends $v^*$
back to the quizmaster. The quizmaster evaluates $v':=\mu'(u)$ and
checks whether $\omega^*(v^*)=\omega'(v')$ holds. The maps
$\sigma:\cM\to\cM^*$ and $\mu^*:\cM^*\to\cN^*$ constitute the
\emph{strategy} of the player. The whole situation becomes depicted
by the following commutative diagram of constructible maps:
\[
\xymatrix{
\mathcal{N} \ar[dr]^{\omega} &  &  &  &   \\
\mathcal{M} \ar[u]^{\mu} \ar[d]_{\mu'} \ar[dr]^{\theta'} \ar[r]^{\theta}  \ar@/^2pc/[rrrr]^{\sigma} & \mathcal{O} \ar[rrr]^{\widetilde{\sigma}} \ar[d]^{\tau} &  &  & \mathcal{M}^* \ar[dl]_{\theta^*} \ar[d]^{\mu^*} \\
\mathcal{N}' \ar[r]^{\omega'} & \mathcal{O}' \ar@{}[r]|{\subset}  &
\mathcal{R} & \mathcal{O}^* \ar@{}[l]|{\supset}  & \mathcal{N}^*
\ar[l]_{\omega^*} }
\]
The player wins the instance of the game given by $u$ if the
condition $\omega^*(v^*)=\omega'(v')$ is satisfied. We say that the
player has a \emph{winning strategy} if he wins the game for any
$u\in \cM$.

We say that the computational task determined by $\theta$ and
$\theta'$ is \emph{feasible} if the size of $\theta'$ is polynomial
in the size of $\theta$. A winning strategy is called
\emph{efficient} if the size of $\theta^*$ is polynomial in the size
of $\theta$ (and therefore polynomial in the size of $\theta'$ for a
feasible computational task). Otherwise it is called
\emph{inefficient}. Observe that $\sigma$ and
$\theta^*=\omega^*\circ\mu^*$ define a winning strategy for the
exact game protocol if and only if
$\theta'=\omega^*\circ\mu^*\circ\tilde{\sigma}\circ
\theta=\omega^*\circ\mu^*\circ\sigma=\theta^*\circ\sigma $ holds. In
this case we have $\mathcal{O}'=\mathcal{O}^*$. The idea behind this
computational model is to restrict the information which quizmaster
and player may interchange. It is supposed that $\sigma(u)$ and
$v^*$ are complex vectors of short length, whereas explicit
descriptions of the framed abstract data type carriers ${\mathcal
O}$, ${\mathcal O}'$ and ${\mathcal O}^*$ may become huge.
%
%
\subsubsection{The protocol of the approximative quiz game}
\label{subsec: approximative game}
Now we introduce the protocol of the approximative quiz game. In
Appendix \ref{sec: approximative representations} below we show that
this protocol captures the notions of approximative algorithms and
approximative complexity of algebraic complexity theory (see, e.g.,
\cite[Chapter 15]{Burgisser97}).

As the game follows almost the same rules as in the exact case, we
shall therefore only stick on the differences. The quizmaster's
answers to the player's questions are now represented by a
geometrically robust constructible map $\sigma$ with domain of
definition $\cM$. We do not anymore assume that $\sigma$ is a
composition of $\theta$ with another map.

We suppose that the quizmaster is able to evaluate $\sigma$. As
before let $\cM^*:=\sigma(\cM)$ and assume that there is given a
framed abstract data type carrier $\cO^*$ and an abstraction
function $\theta^*:\cM^*\to\cO^*$, associated with a geometrically
robust constructible map $\mu^*:\cM^*\to\cN^*$ and a polynomial map
$\omega^*:\cN^*\to\cO^*$, from the compatible collection
$\mathcal{H}$, such that $\theta^*=\omega^*\circ\mu^*$.
The situation is depicted now in the following commutative diagram:
\[
\xymatrix{
\mathcal{N} \ar[dr]^{\omega} &  &  &  &   \\
\mathcal{M} \ar[u]^{\mu} \ar[d]_{\mu'} \ar[dr]^{\theta'} \ar[r]^{\theta}  \ar@/^2pc/[rrrr]^{\sigma} & \mathcal{O}  \ar[d]^{\tau} &  &  & \mathcal{M}^* \ar[dl]_{\theta^*} \ar[d]^{\mu^*} \\
\mathcal{N}' \ar[r]^{\omega'} & \mathcal{O}' \ar@{}[r]|{\subset}  &
\mathcal{R} & \mathcal{O}^* \ar@{}[l]|{\supset}  & \mathcal{N}^*
\ar[l]_{\omega^*} }
\]

We suppose that the following condition is satisfied:\medskip

\emph{For any (not necessarily convergent) sequence $(u_k)_{k\in
\N}$ in $\cM$ and $u\in \cM$ such that $(\theta(u_k))_{k\in \N}$
converges to $\theta(u)$ in the Euclidean topology, the sequence
$\big((\mu^*\circ \sigma)(u_k)\big)_{k\in \N}$ is bounded.}
\medskip

We remark that this condition is satisfied if and only if the map
$\mu^*\circ\sigma:\mathcal{M}\rightarrow\mathcal{N}^*$ is locally
bounded with respect to the Euclidean metric of $\mathcal{N}^*$ and
the topology of $\mathcal{M}$ induced from the Euclidean topology of
$\mathcal{O}$ by $\theta:\mathcal{M}\rightarrow\mathcal{O}$. As we
shall see in Proposition \ref{p:1} below, this implies that for any
$u\in\mathcal{M}$ the value $(\mu^*\circ \sigma)(u)$ strongly
depends on $\theta(u)$, although it may be not uniquely determined
by $\theta(u)$.

The quizmaster chooses now a sequence $(u_k)_{k\in \N}$ in $\cM$ and
an element $u\in \cM$ such that $(\theta(u_k))_{k\in \N}$ converges
to $\theta(u)$, and hides these data from the player. The
quizmaster's answers to the player's questions encode the sequence
$(\sigma(u_k))_{k\in \N}$, which is not necessarily convergent. From
these answers the player infers the sequence
$\big((\mu^*\circ\sigma)(u_k)\big)_{k\in \N}$, which is bounded. The
player wins the approximative game if there is an accumulation point
$v^*$ of $\big((\mu^*\circ\sigma)(u_k)\big)_{k\in \N}$ in the
Euclidean topology with $\omega^*(v^*)=\theta'(u)$. The quizmaster
verifies this by computing $v':=\mu'(u)$ and checking whether
$\omega^*(v^*)=\omega'(v')$ holds.

Recall that the computational task determined by $\theta$ and
$\theta'$ is called \emph{feasible} if the size of $\theta'$ is
polynomial in the size of $\theta$. Again we say that the maps
$\sigma:\cM\to\cM^*$ and $\mu^*:\cM^*\to\cN^*$ constitute the
\emph{strategy} of the player, that the player has a \emph{winning
strategy} if he wins for any $u\in \cM$, and we call this winning
strategy \emph{efficient} if the size of $\theta^*$ is polynomial in
the size of $\theta$ (and then polynomial in the size of $\theta'$
for a feasible computational task). Otherwise, the strategy is
called \emph{inefficient}.

\begin{lemma}
\label{observation} Suppose that $\sigma$ and
$\theta^*=\omega^*\circ\mu^*$ define a winning strategy for the
approximative quiz game protocol. Then we have
$\theta'=\omega^*\circ\mu^*\circ\sigma=\theta^*\circ\sigma$ and
therefore $\mathcal{O}'=\mathcal{O}^*$.
\end{lemma}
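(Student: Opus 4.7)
The plan is to test the winning strategy on the simplest possible situation, namely constant sequences, and read off the desired functional identity. This reduces a statement about accumulation points of arbitrary $\theta$-convergent sequences to a pointwise equality.

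Concretely, I would fix an arbitrary $u\in\mathcal{M}$ and consider the constant sequence $u_k:=u$ for all $k\in\N$. Then $(\theta(u_k))_{k\in\N}$ is the constant sequence $\theta(u)$ and hence trivially converges to $\theta(u)$ in the Euclidean topology of $\mathcal{O}$. The hypothesis is therefore applicable, and the sequence $\big((\mu^*\circ\sigma)(u_k)\big)_{k\in\N}=\big((\mu^*\circ\sigma)(u)\big)_{k\in\N}$ is constant, so its unique accumulation point in the Euclidean topology is $(\mu^*\circ\sigma)(u)$. Since $\sigma$ and $\theta^*=\omega^*\circ\mu^*$ define a winning strategy, there must exist an accumulation point $v^*$ with $\omega^*(v^*)=\theta'(u)$; the only candidate is $v^*=(\mu^*\circ\sigma)(u)$, which yields $(\omega^*\circ\mu^*\circ\sigma)(u)=\theta'(u)$.

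As $u\in\mathcal{M}$ was arbitrary, this proves the functional identity $\theta'=\omega^*\circ\mu^*\circ\sigma=\theta^*\circ\sigma$. For the last assertion $\mathcal{O}'=\mathcal{O}^*$, I would use surjectivity of the abstraction functions onto their respective data type carriers: by definition, $\theta'(\mathcal{M})=\mathcal{O}'$, and $\sigma(\mathcal{M})=\mathcal{M}^*$ by construction of $\mathcal{M}^*$, so $\theta^*(\sigma(\mathcal{M}))=\theta^*(\mathcal{M}^*)=\mathcal{O}^*$. Combining with the equality $\theta'=\theta^*\circ\sigma$ just established gives $\mathcal{O}'=\mathcal{O}^*$.

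The proof is essentially a one-line observation; there is no real obstacle. The only subtle point is to notice that the boundedness hypothesis on $(\mu^*\circ\sigma)(u_k)$ is automatic for constant sequences, so invoking the winning condition costs nothing. The substance of the approximative protocol -- dealing with genuine non-convergent sequences -- is not used here; it is used elsewhere (e.g.\ in Proposition \ref{p:1}) to show that $(\mu^*\circ\sigma)(u)$ depends essentially on $\theta(u)$.
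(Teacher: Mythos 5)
Your proof is correct and follows essentially the same approach as the paper's: both fix an arbitrary $u\in\mathcal{M}$, test the winning strategy on the constant sequence $u_k:=u$, identify $(\mu^*\circ\sigma)(u)$ as the unique accumulation point, and read off the functional identity. The only (minor) difference is that you spell out the surjectivity argument for $\mathcal{O}'=\mathcal{O}^*$, which the paper treats as immediate.
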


\begin{proof}
Let $u$ be an arbitrary point of $\cM$ and consider the
approximative game given by the sequence $(u_k)_{k\in\N}$ defined by
$u_k:=u$, and $u$. Obviously $v^*:=(\mu^*\circ\sigma)(u)$ is the
unique accumulation point of the sequence
$\big(\mu^*\circ\sigma(u_k)\big)_{k\in\N}$. Since $\sigma$ and
$\theta^*=\omega^*\circ\mu^*$ define a winning strategy, we conclude
$\theta'(u)=\omega^*(v^*)=\omega^*((\mu^*\circ\sigma)(u))=
(\omega^*\circ\mu^*\circ\sigma)(u)$. This implies the identities
$\theta'=\omega^*\circ\mu^*\circ\sigma$ and
$\mathcal{O}'=\mathcal{O}^*$.
\end{proof}

We observe that a protocol of the exact quiz game gives always rise
to a protocol of the approximative quiz game. To this end, let
notations be as in our description of the exact model and let be
given a sequence $(u_k)_{k\in \cM}$ of elements of $\cM$ and $u\in
\cM$ such that $(\theta(u_k))_{k\in \N}$ converges to $\theta(u)$.
From the continuity of $\widetilde{\sigma}$ we deduce that the
sequence $(\sigma(u_k))_{k\in \N}= \big((\widetilde{\sigma}\circ
\theta)(u_k)\big)_{k\in \N}$ converges to
$\sigma(u)=(\widetilde{\sigma}\circ \theta)(u)$. The continuity of
$\mu^*$ implies now that $\big((\mu^*\circ \sigma)(u_k)\big)_{k\in
\N}$ converges to $(\mu^*\circ \sigma)(u)$. Therefore the sequence
$\big((\mu^*\circ \sigma)(u_k)\big)_{k\in \N}$ is bounded and has a
single accumulation point, namely $v^*=(\mu^*\circ \sigma)(u)$. In
particular, the player wins the exact game given by $u\in \cM$ if
and only if he wins the approximative quiz game given by the
sequence $(u_k)_{k\in \N}$ defined by $u_k:=u$, and $u\in \cM$.

\begin{proposition}\label{p:1}
Let assumptions and notations be that of the approximative quiz game
and let $u\in \cM$. Suppose that the player has a winning strategy.
Then there exists a finite subset $\cS_u$ of the ambient space of
$\cN^*$ with the following property: for any sequence $(u_k)_{k\in
\N}$ in $\cM$ with $(\theta(u_k))_{k\in \N}$ converging to
$\theta(u)$, all the accumulation points of $\big((\mu^*\circ
\sigma)(u_k)\big)_{k\in \N}$ belong to $\cS_u$.
\end{proposition}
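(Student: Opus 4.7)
The plan is to identify the set of all accumulation points with the fiber, over $\theta(u)$, of the closure of a suitable graph, and then to show this fiber is simultaneously algebraic and bounded in the Euclidean topology, hence finite.

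As a first step I would set $\Psi := \mu^*\circ\sigma$, which by Theorem \ref{theorem geometrically robust functions}$(ii)$ is a geometrically robust constructible map $\mathcal{M}\to\overline{\mathcal{N}^*}$, and introduce the set
\[
\Gamma := \{(\theta(v), \Psi(v)) : v\in\mathcal{M}\} \subseteq \overline{\mathcal{O}}\times\overline{\mathcal{N}^*},
\]
which is constructible as the image of $\mathcal{M}$ under the constructible map $v\mapsto (\theta(v),\Psi(v))$. Let $A_u\subseteq\overline{\mathcal{N}^*}$ denote the set of all accumulation points of sequences $(\Psi(u_k))_{k\in\N}$ arising from sequences $(u_k)$ with $\theta(u_k)\to\theta(u)$. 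A routine sequential argument (extract a subsequence along which $\Psi(u_k)$ converges to prove one inclusion, and select witnesses $(u_k)$ realising a given limit point for the other) shows that $v^*\in A_u$ if and only if $(\theta(u),v^*)$ lies in the Euclidean closure $\overline{\Gamma}$. Since $\Gamma$ is constructible over $\C$, this Euclidean closure coincides with its Zariski closure and is thus an algebraic subvariety of the ambient space, whence $A_u$, viewed as the fibre of this subvariety over $\theta(u)$, is Zariski-closed in the ambient space of $\overline{\mathcal{N}^*}$.

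The core of the argument, and the step I expect to be the main technical obstacle, is to show that $A_u$ is bounded with respect to the Euclidean metric. Here the local boundedness clause built into the approximative protocol is essential. Assuming $A_u$ were unbounded, I would pick $v_n^*\in A_u$ with $\|v_n^*\|\to\infty$, express each $v_n^*$ as the limit of some subsequence $(\Psi(u_{k_j}^{(n)}))_j$ coming from a sequence with $\theta(u_{k_j}^{(n)})\to\theta(u)$, and then extract by a standard diagonal procedure a single sequence $(\tilde u_n)$ satisfying both $\theta(\tilde u_n)\to\theta(u)$ and $\|\Psi(\tilde u_n)\|\to\infty$; this would contradict the standing hypothesis on $\mu^*\circ\sigma$ and establish the desired boundedness.

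To conclude, I would invoke the classical fact that any Euclidean-bounded Zariski-closed subset of an affine space over $\C$ is finite, since every irreducible affine variety of positive dimension projects onto a cofinite subset of $\A^1(\C)$ and is therefore Euclidean-unbounded. Combined with the previous paragraph this forces $A_u$ to be finite, and one may then simply take $\mathcal{S}_u:=A_u$. Note that the hypothesis of a winning strategy is not directly invoked in this argument; it serves only to give an intrinsic description of $\mathcal{S}_u$ as a subset of the fibre $(\omega^*)^{-1}(\theta'(u))$ via Lemma \ref{observation}.
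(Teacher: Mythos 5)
Your argument is correct and takes a genuinely different route from the paper's. The paper introduces the constructible graphs $\cD^+=\{(\theta(v),(\mu^*\circ\sigma)(v)):v\in\cM\}$ and $\cO^+=\{(\theta(v),\theta'(v)):v\in\cM\}$, invokes the winning strategy (via Lemma \ref{observation}) to obtain a surjective polynomial map $\omega^+:\cD^+\to\cO^+$, verifies the boundedness hypothesis of Lemma \ref{lemma module}, and extracts finiteness from the resulting integral dependence relations satisfied by each coordinate of $\mu^*\circ\sigma$ over the local ring of $\C[\ol{\cO^+}]$ at $(\theta(u),\theta'(u))$. You bypass the commutative algebra entirely: your $\Gamma$ is exactly $\cD^+$, and you identify the set $A_u$ of accumulation points with the fibre of $\ol{\Gamma}$ over $\theta(u)$, which is Zariski closed because Euclidean and Zariski closures of constructible sets over $\C$ coincide; then you establish Euclidean boundedness of $A_u$ by a diagonal extraction from the standing local-boundedness clause of the approximative protocol, and conclude finiteness from the elementary fact that a Euclidean-bounded affine variety over $\C$ can have no positive-dimensional irreducible component. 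This yields a shorter, more transparent argument that avoids Lemma \ref{lemma module}; what it forgoes is the explicit algebraic description of $\cS_u$ as roots of an integral dependence equation, which the paper's route produces for free. Your closing remark is also accurate: the paper uses the winning-strategy hypothesis only to identify $\cO^+$ with the graph of $\theta'$, and the finiteness conclusion does not really depend on it, as your proof makes clear.
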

\begin{proof}
Let
\begin{align*}
\cD^+&:=\big\{\big(\theta(u),(\mu^*\circ \sigma)(u)\big):u\in
\cM\big\}\subset\cO\times\cN^*,\\
\cO^+&:=\big\{(\theta(u),\theta'(u)):u\in \cM\big\}\subset
\cO\times\cO',
\end{align*}
and notice that $\cD^+$ and $\cO^+$ are constructible in their
respective ambient spaces. Let $\omega^+:\mathcal{D}^+ \rightarrow
\mathcal{O}^+$ be the polynomial map
$\omega^+:=(\mathrm{id},\omega^*)$, namely
\[
\omega^+\big(\theta(u), (\mu^*\circ
\sigma)(u)\big):=\big(\theta(u),(\omega^*\circ \mu^*\circ
\sigma)(u)\big)=\big(\theta(u), (\theta^*\circ \sigma)(u)\big).
\]
By Lemma \ref{observation}, the assumption that the player has a
winning strategy implies that $(\theta^*\circ\sigma)(u)=\theta'(u)$
holds for any $u\in\mathcal{M}$. Hence $\omega^+$ is surjective.

We show that the surjective polynomial map $\omega^+$ satisfies the
condition of Lemma \ref{lemma module} in Appendix \ref{section:
futher facts on geom rob maps}. To this end, consider a sequence
$(u_k)_{k\in \N}$ in $\cM$ and an element $u\in \cM$ such that
$\big((\theta(u_k),\theta'(u_k))\big)_{k\in \N}$ converges to
$\big(\theta(u),\theta'(u)\big)$. Then $(\theta(u_k))_{k\in \N}$
converges to $\theta(u)$, which implies that $\big((\mu^*\circ
\sigma)(u_k)\big)_{k\in \N}$, and therefore $\big(\theta(u_k),
(\mu^*\circ\sigma)(u_k)\big)_{k\in\N}$, are bounded.

Let $u\in\mathcal{M}$ and let $(u_k)_{k\in \N}$ be an arbitrary
sequence in $\cM$ such that $(\theta(u_k))_{k\in \N}$ converges to
$\theta(u)$. Observe that $\omega^+$ induces a $\C$-algebra
extension $\C[\ol{\cO^+}]\hookrightarrow \C[\ol{\cD^+}]$. Let $\m$
be the maximal ideal of definition of the point $\big(\theta(u),
\theta'(u)\big)$ of the affine variety $\ol{\cO^+}$. From Lemma
\ref{lemma module} we deduce that the
$\C[\ol{\mathcal{O}^+}]_{\mathfrak{m}}$--module
$\C[\ol{\cD^+}]_{\mathfrak{m}}$ is finite. Hence the $\C$-algebra
extension $\C[\ol{\cO^+}]_{\m}\hookrightarrow \C[\ol{\cD^+}]_\m$ is
integral.

Let $\lambda$ be the coordinate function of $\ol{\cD^+}$
corresponding to an arbitrary entry of $\mu^*\circ \sigma$. Then
there exists $s\in\N$ and elements $a_0,a_1\klk a_s$ of
$\C[\ol{\cO^+}]$ with $a_0\notin \m$ such that the algebraic
dependence relation $a_0\lambda^s+a_1\lambda^{s-1}+\cdots +a_s=0$ is
satisfied in $\C[\ol{\cD^+}]$. This implies
\begin{equation}
\label{e:1} \sum_{j=0}^sa_j\big(\theta(u_k),
\theta'(u_k)\big)\lambda^{s-j} \big(\theta(u_k),(\mu^*\circ
\sigma)(u_k)\big)= 0
\end{equation}
for any $k\in \N$. On the other hand, as $a_0\notin \m$, we deduce
that $a_0\big(\theta(u), \theta'(u)\big)\neq 0$ holds. Let $v^*$ be
an accumulation point of the sequence $\big((\mu^*\circ
\sigma)(u_k)\big)_{k\in \N}$. From (\ref{e:1}) we conclude that
$$
\sum_{j=0}^sa_j\big(\theta(u),
\theta'(u)\big)\lambda^{s-j}(\theta(u), v^*)= 0.
$$
Since $a_0(\theta(u), \theta'(u))\neq 0$, only finitely many values
of $\lambda(\theta(u),v^*)$ satisfy this equation, independently of
the sequence $(u_k)_{k\in \N}$. Since $\lambda$ was the coordinate
function of $\ol{\cD^+}$ corresponding to an arbitrary entry of
$\mu^*\circ \sigma$, the conclusion of Proposition \ref{p:1}
follows.
\end{proof}
\begin{corollary}\label{c:1}
Let assumptions and notations be that of the approximative quiz
game. Suppose that the player has a winning strategy. Then for any
$u\in\mathcal{M}$, the set
\[\{\mu^*\circ \sigma(v):v\in \cM, \theta(v)=\theta(u)\}\]
is finite.
\end{corollary}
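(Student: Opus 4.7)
The plan is to derive the corollary as an immediate consequence of Proposition \ref{p:1} by choosing, for each $v \in \mathcal{M}$ with $\theta(v) = \theta(u)$, a particularly simple sequence whose accumulation point is forced to equal $(\mu^* \circ \sigma)(v)$.

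Concretely, fix $u \in \mathcal{M}$ and let $\mathcal{S}_u$ be the finite subset of the ambient space of $\mathcal{N}^*$ supplied by Proposition \ref{p:1}. The key observation is that any $v \in \mathcal{M}$ satisfying $\theta(v) = \theta(u)$ gives rise to the constant sequence $u_k := v$ ($k \in \mathbb{N}$); this sequence trivially satisfies $\theta(u_k) = \theta(v) = \theta(u)$, so it converges to $\theta(u)$ in the Euclidean topology (as a constant, in fact). The image sequence $\big((\mu^* \circ \sigma)(u_k)\big)_{k \in \mathbb{N}}$ is then also constant with value $(\mu^* \circ \sigma)(v)$, so its unique accumulation point is $(\mu^* \circ \sigma)(v)$ itself.

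Applying Proposition \ref{p:1} to this sequence, we conclude that $(\mu^* \circ \sigma)(v) \in \mathcal{S}_u$. Since $v$ was an arbitrary element of the fiber $\theta^{-1}(\theta(u))$, this proves the inclusion
\[
\{(\mu^* \circ \sigma)(v) : v \in \mathcal{M},\ \theta(v) = \theta(u)\} \subseteq \mathcal{S}_u,
\]
and finiteness of the left-hand side follows from finiteness of $\mathcal{S}_u$.

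There is no genuine obstacle here; the entire content of the corollary has already been absorbed into Proposition \ref{p:1}, and the only ``move'' is recognizing that constant sequences are admissible inputs to that proposition. The work done in the proof of Proposition \ref{p:1}---constructing $\mathcal{D}^+$, $\mathcal{O}^+$, and invoking the module-finiteness result of Lemma \ref{lemma module} to extract an integral dependence relation bounding the possible limit values---is exactly what makes this corollary painless.
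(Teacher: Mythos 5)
Your proof is correct and follows exactly the paper's argument: take the constant sequence at $v$, note that $\theta$ of this sequence converges (trivially) to $\theta(u)$, and conclude from Proposition \ref{p:1} that the accumulation point $(\mu^*\circ\sigma)(v)$ lies in the finite set $\mathcal{S}_u$.
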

\begin{proof}
Let $v\in \cM$ with $\theta(v)=\theta(u)$ and let $(v_k)_{k\in \N}$
be the sequence defined by $v_k:=v$ for any $k\in \N$. Then
$(\theta(v_k))_{k\in \N}$ converges to $\theta(v)$ and $(\mu^*\circ
\sigma)(v)$ is an accumulation point of $\big((\mu^*\circ
\sigma)(v_k)\big)_{k\in \N}$. The assertion follows now from
Proposition \ref{p:1}.
\end{proof}
%
%
\subsubsection{Abstract datatype carriers, classes and routines}
The content of this subsection is aimed to clarify our conceptual
system from the point of view of software engineering and will not
be used in the sequel. Our terminology is borrowed from
\cite{Meyer97}.

A subset of $\mathcal{R}$ which can be countably covered by framed
unary data type carriers is called a \emph{unary abstract data type
carrier}. A unary abstract data type carrier together with any such
covering is called a \emph{unary class}. Mutatis mutandis we may
define the notions of \emph{$r$--ary abstract data type carrier} and
\emph{$r$--ary class} for any $r\in\N$ replacing in the above
description $\mathcal{R}$ by its cartesian product $\mathcal{R}^r$.

Let $\mathcal{U}_1$ and $\mathcal{U}_2$ be unary abstract data type
carriers. An \emph{abstract function} between $\mathcal{U}_1$ and
$\mathcal{U}_2$ is a map $\tau:\mathcal{U}_1 \to \mathcal{U}_2$ with
the following properties. For each framed data structure
$\mathcal{M}_1$ and framed unary abstract data type carrier
$\mathcal{O}_1$ together with an abstraction function
$\theta_1:\mathcal{M}_1\to\mathcal{O}_1$, such that $\mathcal{O}_1$
is contained in $\mathcal{U}_1$, there exists a framed data
structure $\mathcal{M}_2$ and a framed unary abstract data type
carrier $\mathcal{O}_2$ together with an abstraction function
$\theta_2 :\mathcal{M}_2\to\mathcal{O}_2$, such that
$\tau(\mathcal{O}_1)\subset \mathcal{O}_2 \subset \mathcal{U}_2$
holds and there exists a geometrically robust constructible map,
called a \emph{routine}, $\sigma:\mathcal{M}_1\to\mathcal{M}_2$ with
$\tau\circ\theta_1=\theta_2\circ\sigma$. Observe that quiz games
with winning strategies instantiate these properties.

For any $r_1,r_2\in\N$, this notion of abstract function between
given $r_1$ and $r_2$--ary abstract data type carriers may be
generalized if we replace the ring $\mathcal{R}$ by its cartesian
products $\mathcal{R}^{r_1}$ and $\mathcal{R}^{r_2}$, respectively.
We speak then about an \emph{abstract function with $r_1$ inputs and
$r_2$ outputs}. This concept of abstract function is not exactly
that of \cite{Meyer97}, but comes close to it.
%
%
\subsubsection{Quiz games and information hiding in software engineering}
Let assumptions and notations be as before. Suppose that there is a
programmer whose task is, in an object oriented manner, to implement
the geometrically robust constructible map
$\tau:\mathcal{O}\to\mathcal{O}'$ between the framed abstract data
type carriers $\mathcal{O}$ and $\mathcal{O}'$ which are represented
by the abstraction functions $\theta$ and $\theta'$.

His program uses observers and constructors. Information hiding
means that the objects from $\mathcal{M}$ which represent elements
of $\mathcal{O}$ and $\mathcal{O}'$ are not accessible to him. He is
only allowed to ask questions about these elements which involve
complex values that he then processes. This situation becomes
modeled by the exact quiz game with the programmer in the r\^{o}le
of the player and the observers in the r\^{o}le of the quizmaster.

More subtle is the situation in the case of the approximative quiz
game. Here we allow the programmer to access to a limited extent
information about the objects which represent the elements of
$\mathcal{O}$ and $\mathcal{O}'$. This information is of
approximative nature and becomes provided by the observers. The
programmer/player must then be able, by means of constructors, to
compute a representation of the output. From Proposition \ref{p:1}
we deduce that the situation  modeled by the approximative quiz game
is not substantially different from that modeled by the exact one.
For each input element of $\mathcal{O}$ there exist only finitely
many possible representations of the corresponding output element of
$\mathcal{O}'$ which can be computed by the programmer/player.
%
%
\section{Selected complexity lower bounds}
We are now going to use the computational model developed in
Sections \ref{subsubsec: exact game} and \ref{subsec: approximative
game} to derive complexity lower bounds for selected computational
problems.

From the seven series of examples we are going to consider in this
section only three are really new. The other ones can be found in
another context in \cite{CaGiHeMaPa03}, \cite{GiHeMaSo11},
\cite{HeKuRo} and \cite{HKRPSantalo}. What is new is our unified
approach to them and the resulting generality of our complexity
results.
%
%
\subsection{Polynomial Interpolation}
In this section we are going to consider four types of approximative
quiz game protocols which generalize the intuitive meaning of
interpolation of families of multivariate and univariate
polynomials.
%
%
\subsubsection{Multivariate polynomial interpolation}
\label{subsubsec: multivariate polynomial interpolation}
Let be given a framed abstract data type carrier $\mathcal{O}$,
framed data structures $\mathcal{M}$ and $\mathcal{N}$, an
abstraction function $\theta:\mathcal{M}\rightarrow\mathcal{O}$
associated with a geometrically robust constructible map
$\mu:\mathcal{M}\rightarrow\mathcal{N}$ and a polynomial map
$\omega:\mathcal{N}\rightarrow\mathcal{O}$, and suppose that
$(\theta,\mu,\omega)$ belongs to a given compatible collection
$\mathcal{H}$.

Let $\mathcal{O}':=\mathcal{O}$, $\tau:=\text{id}_{\mathcal{O}}$,
$\theta':=\theta$, $\mu':=\mu$, $\omega':=\omega$ and let be given
an approximative quiz game protocol with winning strategy for this
situation. Then the player returns for a given parameter instance
$u\in\mathcal{M}$, a point $v^*$ of a suitable affine space such
that $v^*$ encodes $\theta(u)$. In other words, the player replaces
the ``hidden'' encoding $u$ of $\theta(u)$ by a new encoding $v^*$
which he computes from the quizmaster's answers to his questions.

In the exact version of the quiz game the player is limited to ask
questions about the polynomial $\theta(u)$ itself and computes from
the quizmaster's answers his new encoding of $\theta(u)$. If the
player's questions refer only to values of the polynomial
$\theta(u)$ at given inputs, the player solves an interpolation
problem for $\theta(u)$. Below we are going to make more precise
this aspect.
%
%
\paragraph{Interpolation of a family easy to evaluate.}
Let us now analyze the following concrete example of this general
setting. Let $l,n\in\N$ be discrete parameters with
$2^{\frac{l}{2}}\geq n$ and let $\mathcal{M}:=\A^{n+1}$. For
$t\in\A^1$ and $u=(u_1,\dots,u_n)\in\A^n$, let $\theta(t,u)$ be the
polynomial
$$\theta(t,u):=t\sum_{k=0}^{2^l-1} (u_1X_1 + \dots +
u_nX_n)^k,$$
and let $\mathcal{O}:=\text{im\ }\theta$. Observe that the family of
polynomials $\theta$ is evaluable by a robust arithmetic circuit of
size $2n+3l-1$ and that, for any $t\in\A^1$ and any $u\in\A^n$, the
polynomial $\theta(t,u)$ can be computed using $2l-2$ essential
multiplications. Thus there exists an injective affine linear map
$\mu:\mathcal{M}\rightarrow\A^{2n+3l-1}$ with constructible image
$\mathcal{N}:=\mu(\mathcal{M})$ and a polynomial map
$\omega:\mathcal{N}\rightarrow\mathcal{O}$ such that
$\theta=\omega\circ\mu$ holds. Hence $\mathcal{O}$ is a framed
abstract data type carrier and
$\theta:\mathcal{M}\rightarrow\mathcal{O}$ is an abstraction
function of size $2n+3l-1$ associated with $\mu$ and $\omega$. From
Proposition \ref{propo M}, and the comment that follows it, we
deduce that $(\theta,\mu,\omega)$ belongs to a suitable collection
$\mathcal{H}$ of abstraction functions.

Suppose that for the previously considered computation task given by
$\theta$ and id$_{\mathcal{O}}$ there is given a protocol with
winning strategy for the approximative quiz game. Thus we have a
framed abstract data type carrier $\mathcal{O}^*$, framed data
structures $\mathcal{M}^*$ and $\mathcal{N}^*$, an abstraction
function $\theta^*:\mathcal{M}^*\rightarrow\mathcal{O}^*$ of the
compatible collection $\mathcal{H}$, associated with a geometrically
robust constructible map $\mu^*:\mathcal{M}^*\to\mathcal{N}^*$ and a
polynomial map $\omega^*:\mathcal{N}^*\rightarrow\mathcal{O}^*$, and
a surjective geometrically robust constructible map
$\sigma:\mathcal{M}\rightarrow\mathcal{M}^*$ such that by Lemma
\ref{observation} the identities
$\theta=\omega^*\circ\mu^*\circ\sigma=\theta^*\circ\sigma$ and
$\mathcal{O}=\mathcal{O}^*$ hold. We summarize the whole situation
by the following commutative diagram of geometrically robust
constructible maps:
\[
\xymatrix{
 & \mathcal{O} \ar@{}[r]|{=} &  \mathcal{O}^* &    \\
\mathcal{N} \ar[ur]^{\omega} & \mathcal{M} \ar[l]_{\mu}
\ar[u]_{\theta} \ar[r]^{\sigma} &  \mathcal{M}^* . \ar[u]^{\theta^*}
\ar[r]^{\mu^*} & \mathcal{N}^* \ar[ul]_{\omega^*}   }
\]

With these notations and assumptions, we have the following
statement.
\begin{lemma}
\label{lemma order}
The size of $\theta^*$ is of order $2^{\Omega(ln)}$.
\end{lemma}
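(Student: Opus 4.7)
The plan is to establish the exponential lower bound via a Taylor-expansion argument, exploiting the approximative local-boundedness condition to force a strong structural constraint on $\mu^*\circ\sigma$. By Lemma \ref{observation}, the winning-strategy hypothesis gives $\theta=\omega^*\circ\mu^*\circ\sigma$ with $\mathcal{O}^*=\mathcal{O}$. Letting $s$ denote the ambient dimension of $\mathcal{N}^*$ (so $s\leq\mathrm{size}(\theta^*)$), and noting that $\mathcal{M}=\A^{n+1}$ is smooth hence normal, Theorem \ref{theorem geometrically robust functions}(iii) applied to the geometrically robust constructible composition $p:=\mu^*\circ\sigma:\mathcal{M}\to\A^s$ shows that $p$ is a polynomial map. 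Hence $\theta$ factors polynomially as $\theta=\omega^*\circ p$ through $\A^s$.

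Next, I use the approximative winning-strategy condition to conclude that $p$ collapses the hyperplane $\{t=0\}$ to a single point. For any fixed $u_0\in\A^n$, the sequence $u_k:=(0,ku_0)\in\mathcal{M}$ satisfies $\theta(u_k)=0=\theta(0,0)$, so the local-boundedness condition forces $\{p(u_k)\}_{k\in\N}$ to be bounded. Since $p$ is polynomial, $k\mapsto p(0,ku_0)$ is a polynomial in $k$, bounded on $\N$ and thus constant. Varying $u_0$ over $\A^n$ yields $p(0,u)\equiv v_0^*\in\A^s$, so $p(t,u)=v_0^*+t\,q(t,u)$ for some polynomial map $q:\mathcal{M}\to\A^s$.

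Finally, Taylor-expanding $\omega^*$ around $v_0^*$ in the identity $\omega^*(v_0^*+t\,q(t,u))=t\sum_{k=0}^{2^l-1}(u_1X_1+\cdots+u_nX_n)^k=:t\phi(u)$, and matching $t$-coefficients (which forces $\omega^*(v_0^*)=0$ from the $t^0$ term), the $t^1$-coefficient yields
\[
\sum_{j=1}^s \partial_j\omega^*(v_0^*)\cdot q_j(0,u)=\phi(u),
\]
an identity in the space of polynomials in $u_1,\dots,u_n$ with coefficients in $V:=\C[X_1,\dots,X_n]_{\leq 2^l-1}$. Projecting onto the coefficient of each monomial $X^\alpha$ with $|\alpha|\leq 2^l-1$ shows that $u^\alpha$ lies in the $\C$-linear span of $q_1(0,u),\dots,q_s(0,u)\in\C[u_1,\dots,u_n]$ for every such $\alpha$. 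Since the $D=\binom{n+2^l-1}{n}$ such monomials are linearly independent in $\C[u]$, we obtain $s\geq D\geq (2^l/n)^n\geq 2^{ln/2}=2^{\Omega(ln)}$, using the hypothesis $2^{l/2}\geq n$.

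The most delicate step is the second one: deducing the collapse of $\{t=0\}$ under $p$ from the approximative boundedness condition. It is essential here that $p$ is polynomial (established via Theorem \ref{theorem geometrically robust functions}(iii) and the smoothness of $\A^{n+1}$), since the argument hinges on the fact that a polynomial map bounded on a non-compact algebraic set is constant on it. The remaining Taylor expansion and linear-independence computation is then a formal consequence.
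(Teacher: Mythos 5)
Your proof is correct, and it is a genuine streamlining of the paper's argument. The paper follows the template of \cite{GiHeMaSo11}: it restricts to the Kronecker-style one-parameter families $\beta_{\rho}(t)=(t,\rho,\rho^{2^l},\dots,\rho^{2^{(n-1)l}})$, invokes Corollary \ref{c:1} (hence Proposition \ref{p:1} and Lemma \ref{lemma module}) to deduce that $(\mu^*\circ\sigma\circ\beta_{\rho})(0)$ is constant for $\rho$ in a cofinite set, appeals to a nonsingularity result from \cite{GiHeMaSo11} to find $\rho_1,\dots,\rho_K$ with $\frac{d}{dt}(\theta\circ\beta_{\rho_j})(0)$ linearly independent, and then uses the chain rule. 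You instead work with the full polynomial map $p=\mu^*\circ\sigma:\A^{n+1}\to\A^s$, exploit the boundedness axiom of the approximative protocol directly on the unbounded sequences $(0,ku_0)$ to get exact (not just generic) constancy $p(0,\cdot)\equiv v_0^*$ without invoking Proposition \ref{p:1} at all, and replace the Kronecker substitution plus matrix-nonsingularity lemma by a Taylor expansion at $t=0$ followed by projection onto $X^\alpha$-coefficients and the trivial linear independence of the monomials $u^\alpha$. This avoids two external ingredients the paper relies on (Corollary \ref{c:1} and \cite[Lemma 24]{GiHeMaSo11}) at no cost. One micro-nitpick: the $X^\alpha$-coefficient of $\phi(u)$ is $\frac{|\alpha|!}{\alpha!}u^\alpha$ rather than $u^\alpha$, but since the multinomial factor is a nonzero scalar, the linear-span conclusion and the bound $s\geq\binom{n+2^l-1}{n}=2^{\Omega(ln)}$ are unaffected.
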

\begin{proof}
Following \cite[Section 5.2]{GiHeMaSo11}, one verifies easily for
any $t\in\A^1$ and $u=(u_1,\dots,u_n)\in\A^n$ the following
identities:
\begin{align*}
\theta(t,u)&= t\sum_{k=0}^{2^l-1}
\sum_{\overset{(\alpha_1,\dots,\alpha_n) \in \mathbb{Z}_{\ge
0}^n}{\scriptscriptstyle \alpha_1+\dots+\alpha_n = k}}
\frac{k!}{\alpha_1!\dots \alpha_n!}\, u_1^{\alpha_1}\dots
u_n^{\alpha_n}   X_1^{\alpha_1}\dots X_n^{\alpha_n}
\\
&=t\sum_{k=0}^{2^l-1}
\sum_{\overset{(\alpha_1,\dots,\alpha_n)\in\Z_{\geq
0}^{n}}{\scriptscriptstyle \alpha_1+\dots+\alpha_n < 2^l}}
\frac{(\alpha_1+\dots+\alpha_n)!}{\alpha_1!\dots \alpha_n!}
\,u_1^{\alpha_1}\dots u_n^{\alpha_n} X_1^{\alpha_1}\dots
X_n^{\alpha_n}.
\end{align*}

For $(\rho,t)\in\A^2$, let
$\beta_{\rho}(t):=(t,\rho,\rho^{2^l},\dots,\rho^{2^{(n-1)l}})$.
Then, for fixed $\rho\in\A^1$, the map which assigns to each
$t\in\A^1$ the value $(\theta\circ\beta_{\rho})(t)$ is represented
by the polynomial
\[
(\theta\circ\beta_\rho)(t)=t
\sum_{\overset{(\alpha_1,\dots,\alpha_n)\in\Z_{\geq
0}^{n}}{\scriptscriptstyle \alpha_1+\dots+\alpha_n < 2^l}}
\frac{(\alpha_1+\dots+\alpha_n)!}{\alpha_1!\dots \alpha_n!}
\,\rho^{\alpha_1+\alpha_2 2^l +\dots +\alpha_n 2^{(n-1)l} }
X_1^{\alpha_1}\dots X_n^{\alpha_n},
\]
and is therefore linear in $t$.

Observe that the set ${M}_l:=\{(\alpha_1,\dots,\alpha_n)\in\Z_{\geq
0}^{n} :\alpha_1+\dots+\alpha_n < 2^l \}$ has
$K:=\binom{2^l-1+n}{n}$ elements. Moreover, each element
$\underline{\alpha}=(\alpha_1,\dots,\alpha_n)$ of this set can be
identified with its $2^l$--ary representation $\alpha_1+\alpha_2 2^l
+\dots +\alpha_n 2^{(n-1)l} $. From \cite[Lemma 24]{GiHeMaSo11} we
deduce that there exists a non--empty Zariski open subset $\cU$ of
$\A^K$ such that for any point $(\rho_1,\dots,\rho_K)\in\cU$, the
$(K\times K)$--matrix
\[
{\tt
M}_{\rho_1\klk\rho_K}:=\left(\frac{(\alpha_1+\dots+\alpha_n)!}{\alpha_1!\dots
\alpha_n!} \ \rho_s^{\alpha_1+\alpha_2 2^l +\dots+ \alpha_n
2^{(n-1)l} } \right)_{\underline{\alpha}=(\alpha_1\klk\alpha_n)\in
M_l,\,1\leq s\leq K}
\]
is nonsingular. Hence, for $(\rho_1,\dots,\rho_K)\in\cU$, the
vectors $\frac{d}{dt}
(\theta\circ\beta_{\rho_1})(0),\dots,\frac{d}{dt}
(\theta\circ\beta_{\rho_K})(0)$ are $\C$--linearly independent.

The map which assigns to each $(\rho,t)\in\A^2$ the value
$(\mu^*\circ \sigma\circ \beta_{\rho})(t)$ is geometrically robust
and constructible and hence polynomial by Theorem \ref{theorem
geometrically robust functions}$(iii)$. Since any $u\in\A^n$
satisfies the condition $\theta(0,u)=0$ we conclude from Corollary
\ref{c:1} that the set $\{(\mu^*\circ\sigma)(0,u):u\in\A^n \}$ is
finite. Therefore the set $\{(\mu^*\circ\sigma\circ\beta_{\rho})(0):
\rho\in\A^1 \}$ is also finite. Hence there exists a cofinite subset
$\Gamma$ of $\C$ such that for any point $\rho\in\Gamma$, the value
$(\mu^*\circ\sigma\circ\beta_{\rho})(0)$ is the same, say $v$.

On the other hand, $\mathcal{U} \cap \Gamma ^{K}$ is a nonempty
Zariski open subset of $\A^n$, because $\Gamma$ is cofinite.
Therefore there exist elements $\rho_1,\dots,\rho_K\in\Gamma$ such
that the vectors $\frac{d}{dt}
(\theta\circ\beta_{\rho_1})(0),\dots,\frac{d}{dt}(\theta\circ\beta_{\rho_K})(0)$
are $\C$--linearly independent. For $1 \leq j \leq K$, the map
$t\mapsto(\mu^*\circ\sigma\circ\beta_{\rho_j})(t)$ is polynomial and
hence holomorphic. The assumption that the player has a winning
strategy for the given approximative quiz game implies
$$(\theta\circ\beta_{\rho_j})(t)=
(\omega^*\circ\mu^*\circ\sigma\circ\beta_{\rho_j})(t)$$
for any $t\in\A^1$. We may now apply the chain rule to this
situation to conclude
\begin{align*}
\frac{d}{dt}(\theta\circ\beta_{\rho_j})(0)&=(d\omega^*)
\big((\mu^*\circ\sigma\circ\beta_{\rho_j})(0)\big)\cdot \frac{d}{dt}
(\mu^*\circ\sigma\circ\beta_{\rho_j})(0)\\
&= (d \omega^*)(v)\cdot
\frac{d}{dt}(\mu^*\circ\sigma\circ\beta_{\rho_j})(0),
\end{align*}
where $d\omega^*$ denotes the total differential of the polynomial
map $\omega^*$. Since
$\frac{d}{dt}(\theta\circ\beta_{\rho_1})(0)\klk
\frac{d}{dt}(\theta\circ\beta_{\rho_K})(0)$ are $\C$--linearly
independent, we infer that the vectors
$\frac{d}{dt}(\mu^*\circ\sigma\circ\beta_{\rho_1})(0),
\dots,\frac{d}{dt}(\mu^*\circ\sigma\circ\beta_{\rho_K})(0)$ must
generate a $\C$--linear space of dimension $K$. This and the
assumption $ 2^{\frac{l}{2}}\geq n$ imply now that the size of
$\mathcal{N}^*$, and hence the size of $\theta^*$, is at least
$K=\binom{2^l -1+n}{n}=2^{\Omega(ln)}$.
\end{proof}

In conclusion, the winning strategy of the approximative quiz game
under consideration is necessarily inefficient. We formulate now
this conclusion in a more general setting.
\begin{theorem}
\label{theorem lower bound} Let $L,n\in\N$ with $2^{\frac{L}{4}}\geq
n$ and let $\mathcal{O}_{L,n}$ be the abstract data type of all
polynomials of $\C[X_1,\dots,X_n]$ which can be evaluated using at
most $L$ essential multiplications (see Section \ref{subsec: robust
arithmetic circuits}). We think the abstraction function of
$\mathcal{O}_{L,n}$ represented by the corresponding generic
computation and consider the task of replacing the given hidden
encoding of the elements of $\mathcal{O}_{L,n}$ by a known one using
an approximative quiz game with winning strategy. Then any such quiz
game is inefficient requiring an abstraction function of size
$2^{\Omega(Ln)}$.
\end{theorem}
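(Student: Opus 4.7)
The plan is to reduce the theorem to Lemma \ref{lemma order} by exhibiting the family considered there as a subfamily of $\mathcal{O}_{L,n}$ for an appropriate choice of $l$. The hypothesis $2^{L/4}\geq n$ is precisely tuned so that the parameter $l$ one needs in Lemma \ref{lemma order} can be chosen comparable to $L$ while still satisfying the condition $2^{l/2}\geq n$.

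First, I would set $l:=\lfloor (L+2)/2\rfloor$, so that $2l-2\leq L$ and $2^{l/2}\geq 2^{L/4}\geq n$, the latter inequality coming from the hypothesis. Then the family $\theta(t,u):=t\sum_{k=0}^{2^l-1}(u_1X_1+\dots+u_nX_n)^k$ from Lemma \ref{lemma order} is evaluable with at most $2l-2\leq L$ essential multiplications, hence $\mathrm{im}\,\theta\subset\mathcal{O}_{L,n}$. Consequently the map $\theta:\mathcal{M}\to\mathcal{O}_{L,n}$ with $\mathcal{M}:=\A^{n+1}$ factors through the generic computation abstraction function $\theta_{L,n}:\A^{(L+n+1)^2}\to\mathcal{O}_{L,n}$: there is an explicit polynomial map $\iota:\mathcal{M}\to\A^{(L+n+1)^2}$ assigning to each $(t,u)$ the parameter vector of an ordinary division--free circuit which computes $\theta(t,u)$ with at most $L$ essential multiplications, and $\theta=\theta_{L,n}\circ\iota$.

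Next, assume towards a contradiction that there is an approximative quiz game with a winning strategy $(\sigma_{L,n},\mu^*_{L,n},\omega^*_{L,n})$ for the task determined by $\theta_{L,n}$ and $\mathrm{id}_{\mathcal{O}_{L,n}}$, and that the size of the associated abstraction function $\theta^*_{L,n}=\omega^*_{L,n}\circ\mu^*_{L,n}$ is smaller than $2^{\Omega(Ln)}$. Composing with $\iota$, I would define $\sigma:=\sigma_{L,n}\circ\iota:\mathcal{M}\to\mathcal{M}^*$ (where $\mathcal{M}^*:=\sigma_{L,n}(\A^{(L+n+1)^2})$). Since $\iota$ is polynomial, hence geometrically robust by Theorem \ref{theorem geometrically robust functions}$(iv)$, and $\sigma_{L,n}$ is geometrically robust, their composition $\sigma$ is again geometrically robust by Theorem \ref{theorem geometrically robust functions}$(ii)$. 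Moreover, $\theta^*_{L,n}\circ\sigma=\theta^*_{L,n}\circ\sigma_{L,n}\circ\iota=\theta_{L,n}\circ\iota=\theta$, so the same triple $(\sigma,\mu^*_{L,n},\omega^*_{L,n})$ yields a winning strategy for the approximative quiz game determined by $\theta$ and $\mathrm{id}_{\mathrm{im}\,\theta}$.

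The last step is to apply Lemma \ref{lemma order} to this induced winning strategy: it forces the size of $\theta^*_{L,n}$ (now playing the role of $\theta^*$) to be $2^{\Omega(ln)}=2^{\Omega(Ln)}$, contradicting our assumption and proving the theorem. The only subtlety I foresee is checking that restricting from $\mathcal{O}_{L,n}$ to the subfamily $\mathrm{im}\,\theta$ does not enlarge the size bound of the strategy --- but this is immediate because one keeps the very same $\mu^*_{L,n},\omega^*_{L,n}$; the underlying framed data structures $\mathcal{N}^*$ and $\mathcal{M}^*$ are the same, so the size of the derived strategy is bounded by the size of the original one. In particular, an efficient strategy for the big task would yield an efficient one for the small task, which Lemma \ref{lemma order} rules out.
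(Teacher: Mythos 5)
Your proof is correct and follows essentially the same route as the paper: choose $l=\lfloor L/2+1\rfloor$ so that $2l-2\leq L$ and $2^{l/2}\geq 2^{L/4}\geq n$, note that the family of Lemma \ref{lemma order} then sits inside $\mathcal{O}_{L,n}$, and transfer any winning strategy for the generic computation of $\mathcal{O}_{L,n}$ down to that subfamily to invoke the lemma. The paper phrases the transfer via Theorem \ref{theorem geometrically robust functions}$(i)$ (restriction of a geometrically robust map) while you phrase it via $(ii)$ and $(iv)$ (composition with the polynomial embedding $\iota$); these are just two ways of saying the same thing, and your version usefully makes explicit the factorization $\theta=\theta_{L,n}\circ\iota$ that the paper leaves implicit.
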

\begin{proof}
Let $l:= \lfloor \frac{L}{2} + 1 \rfloor$ and observe that for any
$t\in\A^1$ and $u\in\A^n$ the polynomial $\theta(t,u)$ belongs to
$\mathcal{O}_{L,n}$. Then the above statement follows immediately
from Theorem \ref{theorem geometrically robust functions}$(i)$ and
Lemma \ref{lemma order}.
\end{proof}

The restriction of Theorem \ref{theorem lower bound} to exact quiz
games with winning strategy may be paraphrased in terms of learning
theory as follows.
\begin{corollary}
For $L,n\in\N$ with $2^{\frac{L}{4}}\geq n$, the concept class
$\mathcal{O}_{L,n}$ has a representation of size $(L+n+1)^2$ by
means of the corresponding generic computation, but its concepts
require an amount of $2^{\Omega(Ln)}$ arithmetic operations to be
learned continuously.
\end{corollary}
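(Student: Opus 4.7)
The plan is to deduce the corollary as an immediate reformulation of Theorem \ref{theorem lower bound} restricted to the exact protocol, combined with the explicit size bound for the generic computation of $\mathcal{O}_{L,n}$.

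First I would justify the upper bound. As recalled at the end of Section \ref{subsec: robust arithmetic circuits}, the class $\mathcal{O}_{L,n}$ of polynomials of $\C[X_1,\dots,X_n]$ evaluable with at most $L$ essential multiplications admits a representation by the generic computation, which is a robust arithmetic circuit with parameter domain $\A^{(L+n+1)^2}$ (following \cite[Exercise 9.18]{Burgisser97}). This circuit produces an abstraction function $\theta_{L,n}\colon\A^{(L+n+1)^2}\to\mathcal{O}_{L,n}$ of size $(L+n+1)^2$, which gives the claimed representation size.

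Next I would transfer the $2^{\Omega(Ln)}$ lower bound. The key observation is that in the exact protocol the player's strategy is encoded by the geometrically robust constructible maps $\sigma$ and $\mu^*$, which by Theorem \ref{theo continuos} are exactly the constructible total maps that are continuous for the Euclidean topology. Thus a winning strategy in the exact quiz game is precisely a \emph{continuous} algorithm that, from queries about an unknown concept $\theta_{L,n}(u)\in\mathcal{O}_{L,n}$, outputs an alternative encoding $v^*$ of the same polynomial; in learning-theoretic language, this is a continuous learner for the concept class $\mathcal{O}_{L,n}$. As noted in the paragraph following Lemma \ref{observation}, every exact quiz game protocol extends to an approximative one with the same strategy, so any continuous learner would in particular furnish an approximative winning strategy for the task $(\theta_{L,n},\mathrm{id}_{\mathcal{O}_{L,n}})$.

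Finally, applying Theorem \ref{theorem lower bound} to this approximative protocol, the size of $\theta^*$ must be $2^{\Omega(Ln)}$, and this is the number of arithmetic operations needed to produce the alternative encoding. The hypothesis $2^{L/4}\geq n$ of the corollary is exactly the one required by Theorem \ref{theorem lower bound}. I expect no real obstacle here: the only subtle point to articulate carefully is the dictionary between the software-engineering vocabulary of the exact quiz game (winning strategy, abstraction function sizes) and the learning-theoretic phrase ``learned continuously'', which rests entirely on Theorem \ref{theo continuos}.
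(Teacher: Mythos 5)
Your proposal is correct and follows essentially the same route as the paper, which presents the corollary as an immediate restatement of Theorem \ref{theorem lower bound} restricted to exact quiz games. You correctly identify all the ingredients the paper relies on implicitly: the size $(L+n+1)^2$ from the generic computation, the observation (from the paragraph after Lemma \ref{observation}) that an exact winning strategy furnishes an approximative one so that Theorem \ref{theorem lower bound} applies, and the dictionary via Theorem \ref{theo continuos} between geometrically robust constructible maps and Euclidean-continuous constructible maps that turns the quiz-game lower bound into a statement about continuous learning.
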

%
%
\paragraph{Interpolation from an identification sequence.}
We are now going to explain how the computational task of
interpolating for $L,n\in\N$ the family of polynomials
$\mathcal{O}_{L,n}$ \emph{continuously} may be interpreted as a
particular instance of an exact quiz game protocol. From Proposition
\ref{propo M} we deduce that there exist for
$\mathcal{O}_{L,n}\times\mathcal{O}_{L,n}$ (many) integer
identification sequences of length $m:=4(L+n+1)^2 +2$ and bit size
at most $3L+1$. Let us fix for the moment such an identification
sequence $\gamma=(\gamma_1,\dots,\gamma_m)\in(\Z^n)^m$ and let
$\widetilde{\sigma}:\mathcal{O}_{L,n}\rightarrow\A^m$ be the map
$\widetilde{\sigma}(f):=(f(\gamma_1),\dots,f(\gamma_m))$. Then
$\widetilde{\sigma}$ is a polynomial map with constructible image
$\mathcal{M}^*:=\widetilde{\sigma}(\mathcal{O}_{L,n})$ and
$\widetilde{\sigma}:\mathcal{O}_{L,n}\rightarrow\mathcal{M}^*$ is
bijective. Let $\mathcal{N}^*$ be the constructible subset of $\A^{
\binom {2^L +n} {n}}$ formed by the coefficient vectors of the
elements of $\mathcal{O}_{L,n}$ and let
$\omega^*:\mathcal{N}^*\rightarrow\mathcal{O}_{L,n}$ be the map
which assigns to each element of $\mathcal{N}^*$ the corresponding
polynomial of $\C[X_1,\dots,X_n]$. Then $\omega^*$ is a polynomial
map and $\mathcal{M}^*$ and $\mathcal{N}^*$ form framed data
structures. According to \cite[Corollary 3]{CaGiHeMaPa03}, there is
a unique rational, everywhere defined, constructible map
$\mu^*:\mathcal{M}^*\rightarrow\mathcal{N}^*$, which is continuous
with respect to the Euclidean topologies of $\mathcal{M}^*$ and
$\mathcal{N}^*$, whose composition $\omega^*\circ\mu^*$ with
$\omega^*$ is the inverse map of
$\widetilde{\sigma}:\mathcal{O}_{L,n}\rightarrow\mathcal{M}^*$.

From Theorem \ref{theo continuos} we deduce that $\mu^*$ is
geometrically robust. Let us consider the framed abstract data type
carriers $\mathcal{O}:=\mathcal{O}_{L,n}$ and
$\mathcal{O}^*:=\mathcal{O}_{L,n}$, the framed data structures
$\mathcal{M}:=\A^{(L+n+1)^2}$,
$\mathcal{M}^*:=\widetilde{\sigma}(\cO_{L,n})$ and $\mathcal{N}^*$,
the polynomial map $\theta:\mathcal{M}\rightarrow\mathcal{O}$ given
by the generic computation corresponding to $\mathcal{O}_{L,n}$, the
polynomial map $\omega^*:\mathcal{N}^*\rightarrow\mathcal{O}^*$ and
the geometrically robust constructible maps
$\mu^*:\mathcal{M}^*\rightarrow\mathcal{N}^*$,
$\theta^*:=\omega^*\circ\mu^*$,
$\widetilde{\sigma}:\mathcal{O}\rightarrow\mathcal{M}^*$ and
$\sigma:=\widetilde{\sigma}\circ\theta$. They form the following
commutative diagram of geometrically robust constructible maps:
\[
\xymatrix{
\mathcal{O} \ar@{}[r]|{=} \ar[dr]^{\widetilde{\sigma}} & \mathcal{O}^* &  \\
\mathcal{M} \ar[u]^{\theta} \ar[r]^{\sigma} & \mathcal{M}^*
\ar[u]_{\theta^*} \ar[r]^{\mu^*} & \mathcal{N}^* \ar[ul]_{\omega^*}
}
\]
This diagram represents an exact quiz game protocol with winning
strategy for the situation considered at the beginning of this
section.

On the other hand, the geometrically robust constructible map
$\mu^*:\mathcal{M}^*\rightarrow\mathcal{N}^*$ assigns to each
element $\widetilde{\sigma}(f)=(f(\gamma_1),\dots,f(\gamma_m))$ of
$\mathcal{M}^*$ the coefficient vector
$\mu^*(\widetilde{\sigma}(f))$ of the polynomial $f$, and therefore
interpolates $f$ at the points $\gamma_1,\dots,\gamma_m\in\Z^n$.
Since $\mu^*$ is continuous with respect to the Euclidean topologies
of $\mathcal{M}^*$ and $\mathcal{N}^*$, it solves the corresponding
interpolation task continuously. This solution is also effective
because $\mu^*$ is constructible.

Given any identification sequence $\gamma=(\gamma_1,\dots,\gamma_m)$
for $\mathcal{O}_{L,n}\times \mathcal{O}_{L,n}$, any continuous
solution of the task to interpolate the elements of
$\mathcal{O}_{L,n}$ in the points $\gamma_1,\dots,\gamma_m$ may be
depicted as in the commutative diagram above with $\mu^*$ and
$\omega^*$ representing a robust arithmetic circuit. Theorem
\ref{theorem lower bound} implies now that such a solution requires
necessarily the use of $2^{\Omega(Ln)}$ arithmetic operations. This
is the content of \cite[Theorem 23]{GiHeMaSo11}.
%
%
\subsubsection{Univariate polynomial interpolation}
We are now going to analyze the three series of examples which
generalize univariate polynomial interpolation. To this end, let us
fix a discrete parameter $D\in\N$. Let $l:=6 \lceil\log D-1\rceil
+1$, $\mathcal{M}:=\A^1$ and $X$ be an indeterminate. For
$t\in\mathcal{M}$, let
$$\theta_D(t):=(t^{D+1}-1)\sum_{0\leq k \leq D} t^k X^k.$$

We put $\mathcal{O}_D:=\text{im\ }\theta_D$. Observe that the family
of polynomials $\theta_D$ is evaluable by a robust arithmetic
circuit of size $l$. Thus there exists an injective affine map
$\mu_D:\mathcal{M}\rightarrow\A^l$ with constructible image
$\mathcal{N}_D:=\mu_D(\mathcal{M})$ and a polynomial map
$\omega_D:\mathcal{N}_D\rightarrow\mathcal{O}_D$ such that
$\theta_D=\omega_D\circ\mu_D$ holds. Hence $\mathcal{O}_D$ is a
framed abstract data type carrier and
$\theta_D:\mathcal{M}\rightarrow\mathcal{O}_D$ is an abstraction
function of size $l$ associated with $\mu_D$ and $\omega_D$. From
Proposition \ref{propo M}, and the comment that follows it, we
deduce that $(\theta_D,\mu_D,\omega_D)$ belongs to a suitable
collection $\mathcal{H}$ of abstraction functions.

For any $t\in\mathcal{M}$, let $\theta_D'(t):=\theta_D(t)$,
$\theta_D''(t)$ the derivative of $\theta_D(t)$ with respect of the
variable $X$, and $\theta_D'''(t)$ the indefinite integral with
respect to $X$, namely
$$\theta_D''(t):=(t^{D+1}-1)\sum_{1\leq k \leq D}
kt^{k}X^{k-1},\quad \theta_D'''(t):=(t^{D+1}-1)\sum_{0\leq k \leq D}
\frac{t^k}{k+1} X^{k+1}.$$
Let $\mathcal{O}_D':=\text{im\ }\theta_D'$,
$\mathcal{O}_D'':=\text{im\ }\theta_D''$ and
$\mathcal{O}_D''':=\text{im\ }\theta_D'''$. We consider the
geometrically robust constructible maps
$$
\begin{array}{rcrcrclrcl}
\tau_D':&\mathcal{O}_D\rightarrow\mathcal{O}_D',&
\tau_D'':&\mathcal{O}_D\rightarrow\mathcal{O}_D'',&
\tau_D''':&\mathcal{O}_D\rightarrow\mathcal{O}_D''',\\
&\theta_D(t)\mapsto\theta_D'(t),&&\theta_D(t)\mapsto\theta_D''(t),
&&\theta_D(t)\mapsto\theta_D'''(t).
\end{array}
$$
Suppose that there are given framed data structures
$\mathcal{N}_D''$ and $\mathcal{N}_D'''$ of size polynomial in $l$,
geometrically robust constructible maps
$\mu_D'':\mathcal{M}\rightarrow\mathcal{N}_D''$ and
$\mu_D''':\mathcal{M}\rightarrow\mathcal{N}_D'''$, and polynomials
maps $\omega_D'':\mathcal{N}_D''\rightarrow\mathcal{O}_D''$ and
$\omega_D''':\mathcal{N}_D'''\rightarrow\mathcal{O}_D'''$, such that
$\theta_D'':\mathcal{M}\rightarrow\mathcal{O}_D''$ and
$\theta_D''':\mathcal{M}\rightarrow\mathcal{O}_D'''$ are abstraction
functions of the compatible collection $\mathcal{H}$ associated with
$\mu_D''$, $\omega_D''$ and $\mu_D'''$, $\omega_D'''$, respectively.
Finally, let $\mathcal{N}_D':=\mathcal{N}_D$, $\mu_D':=\mu_D$ and
$\omega_D':=\omega_D$. Each of the three items $(\theta_D,\tau_D')$
and $(\theta_D,\tau_D'')$ and $(\theta_D,\tau_D''')$ define a
computation task for an exact quiz game. The following diagram
illustrates this scenario. Notice that $\theta_D$, $\theta_D'$,
$\theta_D''$ and $\theta_D'''$ have the same domain of definition
$\mathcal{M}$.
\begin{center}
\begin{tikzpicture}
\centering
\matrix(m)[matrix of math nodes,
row sep=2.6em, column sep=4.8em,
text height=1.5ex, text depth=0.25ex]
{ \mathcal{O}_D & \mathcal{O}'_D & \mathcal{O}''_D & \mathcal{O}'''_D \\
  \mathcal{N}_D & \mathcal{N}'_D & \mathcal{N}''_D & \mathcal{N}'''_D \\
  \mathcal{M}   & \mathcal{M}    & \mathcal{M}     & \mathcal{M}      \\};
\path[->,font=\scriptsize,>=angle 90]
(m-1-1) edge node[auto] {$\tau'_D$}
             node[below] {identity} (m-1-2)
        edge[out=30,in=150] node[auto] {$\tau''_D$}
                            node[below] {derivative} (m-1-3)
        edge[out=45,in=135] node[auto] {$\tau'''_D$}
                            node[below] {integral} (m-1-4)
(m-2-1) edge node[right] {$\omega$} (m-1-1)
(m-2-2) edge node[right] {$\omega'$} (m-1-2)
(m-2-3) edge node[right] {$\omega''$} (m-1-3)
(m-2-4) edge node[right] {$\omega'''$} (m-1-4)

(m-3-1) edge node[right] {$\mu$} (m-2-1)
(m-3-2) edge node[right] {$\mu'$} (m-2-2)
(m-3-3) edge node[right] {$\mu''_D$} (m-2-3)
(m-3-4) edge node[right] {$\mu'''_D$} (m-2-4)

(m-3-1) edge[out=110,in=250] node[auto] {$\theta_D$} (m-1-1)
(m-3-2) edge[out=110,in=250] node[auto] {$\theta'_D$} (m-1-2)
(m-3-3) edge[out=110,in=250] node[auto] {$\theta''_D$} (m-1-3)
(m-3-4) edge[out=110,in=250] node[auto] {$\theta'''_D$} (m-1-4);
\end{tikzpicture}
\end{center}

Suppose that for any of these tasks, e.g., for that given by
$(\theta_D,\tau_D')$, there is given an exact quiz game protocol
with winning strategy. Thus we have a framed abstract data type
carrier $\mathcal{O}^*$, framed data structures $\mathcal{M}^*$ and
$\mathcal{N}^*$, an abstraction function
$\theta^*:\mathcal{M}^*\rightarrow\mathcal{O}^*$ of the compatible
collection $\mathcal{H}$, associated with a geometrically robust
constructible map $\mu^*:\mathcal{M}^*\rightarrow\mathcal{N}^*$ and
a polynomial map $\omega^*:\mathcal{N}^*\rightarrow\mathcal{O}^*$,
and geometrically robust constructible maps
$\widetilde{\sigma}:\mathcal{O}_D\rightarrow\mathcal{M}^*$ and
$\sigma:=\widetilde{\sigma}\circ\theta_D$ such that the identities
$\theta_D'=\omega^*\circ\mu^*\circ\widetilde{\sigma}\circ\theta_D=
\omega^*\circ\mu^*\circ\sigma=\theta^*\circ\sigma$ and
$\mathcal{O}_D'=\mathcal{O}^*$ hold. The following diagram
illustrates this exact quiz game protocol.

\begin{center}
\begin{tikzpicture}
\matrix(m)[matrix of math nodes,
row sep=2.6em, column sep=4.8em,
text height=1.5ex, text depth=0.25ex]
{ \mathcal{O}^* &  & \mathcal{O}_D & \mathcal{O}'_D \\
  \mathcal{N}^* &  & \mathcal{N}_D & \mathcal{N}'_D \\
  \mathcal{M}^* &  & \mathcal{M}   & \mathcal{M} \\};
\path[->,font=\scriptsize,>=angle 90]
(m-3-1) edge[out=110,in=250] node[auto] {$\theta^*$} (m-1-1)
(m-3-3) edge[out=110,in=250] node[auto] {$\theta_D$} (m-1-3)
(m-3-4) edge[out=110,in=250] node[auto] {$\theta'_D$} (m-1-4)

(m-2-1) edge[auto] node[right] {$\omega^*$} (m-1-1)
(m-3-1) edge[auto] node[right] {$\mu^*$} (m-2-1)

(m-2-3) edge[auto] node[right] {$\omega$} (m-1-3)
(m-3-3) edge[auto] node[right] {$\mu$} (m-2-3)

(m-2-4) edge[auto] node[right] {$\omega'$} (m-1-4)
(m-3-4) edge[auto] node[right] {$\mu'$} (m-2-4)

(m-1-3) edge[auto] node[auto] {$\tau'$} (m-1-4)
(m-1-3) edge[auto] node[above] {$\widetilde{\sigma}$} (m-3-1)
(m-3-3) edge[auto] node[above] {$\sigma$} (m-3-1);

\end{tikzpicture}
\end{center}

\begin{lemma}
\label{lemma G D}
The size of $\theta^*$ is at least $D+1$.
\end{lemma}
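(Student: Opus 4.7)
My plan is to exploit the vanishing of $\theta_D$ at the $(D+1)$-th roots of unity, combined with the chain rule, to extract $D+1$ linearly independent tangent vectors in the ambient space of $\mathcal{N}^*$. The role of the roots of unity is that they collapse all these parameter points to a single point of $\mathcal{O}_D$, so that the corresponding points of $\mathcal{N}^*$ coincide, and only the first-order behavior of $\mu^*\circ\sigma$ at these points is free to vary.

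First I will take the $(D+1)$-th roots of unity $\omega_0,\dots,\omega_D\in\C$, so that $\omega_j^{D+1}-1=0$ and hence $\theta_D(\omega_j)=0$ for every $j$. Since in the exact protocol $\sigma=\widetilde{\sigma}\circ\theta_D$, all the values $\sigma(\omega_j)=\widetilde{\sigma}(0)$ coincide, and so $(\mu^*\circ\sigma)(\omega_j)=v$ for a common element $v$ of $\mathcal{N}^*$. To use derivatives, I need that $\mu^*\circ\sigma$ be differentiable. This is where Theorem \ref{theorem geometrically robust functions} intervenes: parts (ii) and (iv) give that the composite $\mu^*\circ\sigma:\A^1\to\mathcal{N}^*$ is geometrically robust and constructible, and since its domain $\A^1$ is smooth, part (iii) upgrades it to a polynomial map. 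Differentiating the winning-strategy identity $\theta_D=\omega^*\circ\mu^*\circ\sigma$ at each $\omega_j$, the chain rule therefore yields
\[
\frac{d\theta_D}{dt}(\omega_j)=(d\omega^*)(v)\cdot\frac{d(\mu^*\circ\sigma)}{dt}(\omega_j),
\]
where $d\omega^*$ denotes the total differential of the polynomial map $\omega^*$ and $(d\omega^*)(v)$ is a fixed $\C$-linear map whose source has dimension equal to the size of $\mathcal{N}^*$.

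The last step is a direct computation: using $(t^{D+1}-1)|_{t=\omega_j}=0$ and $(D+1)t^D|_{t=\omega_j}=(D+1)\omega_j^D$, I get $\frac{d\theta_D}{dt}(\omega_j)=(D+1)\omega_j^D\sum_{k=0}^D \omega_j^k X^k$, whose coefficient matrix $((D+1)\omega_j^{D+k})_{0\le j,k\le D}$ is non-singular by a Vandermonde argument, since the $\omega_j$ are pairwise distinct and non-zero. Hence the $D+1$ polynomials $\frac{d\theta_D}{dt}(\omega_0),\dots,\frac{d\theta_D}{dt}(\omega_D)$ are $\C$-linearly independent, and being all contained in the image of the single linear map $(d\omega^*)(v)$, they force the vectors $\frac{d(\mu^*\circ\sigma)}{dt}(\omega_j)$ to be $\C$-linearly independent as well. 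These vectors live in the ambient space of $\mathcal{N}^*$, whose dimension is therefore at least $D+1$, and since the size of $\theta^*$ dominates the size of $\mathcal{N}^*$ the claimed bound follows. The only genuine obstacle is making sure the chain rule applies, i.e.\ that $\mu^*\circ\sigma$ is honestly polynomial; this is bought cheaply by the smoothness of $\mathcal{M}=\A^1$ together with Theorem \ref{theorem geometrically robust functions}(iii), after which the rest is linear algebra.
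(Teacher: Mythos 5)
Your argument is correct and follows the paper's own proof: both exploit that $\theta_D$ vanishes at the $(D+1)$-th roots of unity (so the values $(\mu^*\circ\sigma)(\zeta)$ all coincide), use Theorem \ref{theorem geometrically robust functions}$(iii)$ to see that $\mu^*\circ\sigma$ is polynomial on the smooth curve $\A^1$, apply the chain rule at each root of unity, and conclude via the Vandermonde-type linear independence of the vectors $\frac{d\theta_D}{dt}(\zeta)$. The only cosmetic difference is that the paper factors through the translations $\beta_\zeta(s)=s+\zeta$ and then evaluates derivatives at $0$, whereas you differentiate directly at the points $\zeta$; the two formulations are identical.
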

\begin{proof}
Denote by $\G_D$ the subset of $\mathcal{M}$ consisting of all
$(D+1)$--th roots of unity. The cardinality of $\G_D$ is $D+1$.
Observe that any $\zeta\in \G_D$ satisfies the condition
$\theta_D(\zeta)=0$ and therefore $\sigma(\zeta)=
(\widetilde{\sigma}\circ\theta_D)(\zeta)$ does not depend on
$\zeta$.

For any $\zeta\in\G_D$, let
$\beta_{\zeta}:\mathcal{M}\rightarrow\mathcal{M}$ be the affine
linear function defined by $\beta_{\zeta}(s):=s+\zeta$. Then by
Theorem \ref{theorem geometrically robust functions}$(iii)$ the maps
$\theta_D'\circ\beta_{\zeta}$ and
$\mu^*\circ\sigma\circ\beta_{\zeta} =\mu^*\circ\widetilde{\sigma}
\circ\theta_D\circ\beta_{\zeta}$ are polynomial and hence
holomorphic. Moreover, $u:=(\mu^*\circ\sigma\circ\beta_{\zeta})(0)$
does not depend on $\zeta$ and consequently
$(\theta_D'\circ\beta_{\zeta})(0)$ depends neither on $\zeta$.

We have $\frac{d}{dt}(\theta_D'\circ\beta_{\zeta})(0)=(D+1)\zeta^D
\sum_{0 \leq k \leq D} \zeta^k X^k$ and therefore the vectors
$\frac{d}{dt}(\theta_D'\circ\beta_{\zeta})(0)$, $\zeta\in\G_D$, are
$\C$--linearly independent. Since the player has a winning strategy,
$\mu^*\circ\sigma\circ\beta_{\zeta}$ is holomorphic and $\omega^*$
is polynomial, we may apply the chain rule to get
\begin{align*}
\frac{d}{dt}(\theta_D'\circ\beta_{\zeta})(0) & =
d\omega^*\big((\mu^*\circ\sigma\circ\beta_{\zeta})(0)\big)\cdot
\frac{d}{dt} (\mu^*\circ\sigma\circ\beta_{\zeta})(0) \\
& = d\omega^*(u)\cdot \frac{d}{dt}
(\mu^*\circ\sigma\circ\beta_{\zeta})(0)
\end{align*}
for any $\zeta\in\G_D$. We conclude that the vectors $\frac{d}{dt}
(\mu^*\circ\sigma\circ\beta_{\zeta})(0)$, $\zeta\in\G_D$, generate a
$\C$--linear space of dimension $D+1$. This implies that the size of
$\mathcal{N}^*$, and hence the size of $\theta^*$, is at least
$D+1$.

The argumentation in the case of the univariate polynomial
interpolation problems given by $(\theta_D,\tau_D'')$ and
$(\theta_D,\tau_D''')$ is almost textually the same. The only
difference is the form of the vectors $\frac{d}{dt}
(\theta_D''\circ\beta_{\zeta})(0) = (D+1)\zeta^D\sum_{1  \leq k \leq
D} k\zeta^k X^{k-1}$ and $\frac{d}{dt}
(\theta_D'''\circ\beta_{\zeta})(0)=(D+1)\zeta^D\sum_{0\leq k \leq D}
\frac{1}{k+1}  \zeta^k X^{k+1}$ for $\zeta\in\G_D$.
\end{proof}

Following \cite[\S 8.1]{Burgisser97}, any polynomial $f$ over $\C$
requires at least $\log\deg f$ essential multiplications to be
evaluated by an ordinary division--free arithmetic circuit. Let
$(\theta_D)_{D\in\N}$ be the sequence of families of univariate
polynomials considered before. Observe that for any
$t\in\mathcal{M}$, the univariate polynomial $\theta_D(t)$ can be
evaluated by an ordinary division--free arithmetic circuit using $4
\lceil\log D-2\rceil$ essential multiplications and that
$\deg\theta_D(t)\leq D$ holds. For all but finitely many
$t\in\mathcal{M}$ we have even $\deg\theta_D(t)=D$. In this sense,
$(\theta_D)_{D\in\N}$ is a sequence of families of univariate
polynomials which are \emph{easy to evaluate}. From Lemma \ref{lemma
G D} we deduce now the following less technical statement.
\begin{theorem}
\label{less technical theorem} There exists a sequence of families
of univariate polynomials which are easy to evaluate such that the
continuous interpolation of these polynomials, or their derivatives,
or their indefinite integrals, requires an amount of arithmetic
operations which is exponential in the number of essential
multiplications of the most efficient ordinary division--free
arithmetic circuits which evaluate these polynomials.
\end{theorem}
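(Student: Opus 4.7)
The plan is to take the sequence $(\theta_D)_{D\in\N}$ already introduced in the discussion preceding Lemma \ref{lemma G D} and to verify that it witnesses both properties claimed in the theorem: these polynomials are easy to evaluate, and any continuous interpolation of them (or of their derivatives or indefinite integrals) forces the quiz game abstraction function $\theta^*$ to have size exponential in the number of essential multiplications needed for evaluation.

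First I would record the evaluation bound. As noted in the paragraph just before the theorem statement, for every $t\in\mathcal{M}$ the polynomial $\theta_D(t)$ has degree at most $D$ (with equality outside a finite set of $t$) and can be computed by an ordinary division--free arithmetic circuit using $L := 4\lceil\log D - 2\rceil$ essential multiplications. Thus $L = O(\log D)$, and equivalently $D+1 = 2^{\Omega(L)}$. This settles the ``easy to evaluate'' half of the statement.

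Next I would recast continuous interpolation of $\theta_D$, $\theta_D''$ and $\theta_D'''$ as exact quiz game protocols with winning strategy for the three computation tasks $(\theta_D,\tau_D')$, $(\theta_D,\tau_D'')$ and $(\theta_D,\tau_D''')$ defined immediately above Lemma \ref{lemma G D}. The translation is the univariate analogue of what is done in the subsubsection ``Interpolation from an identification sequence'': by Proposition \ref{propo M}, $\mathcal{O}_D\times\mathcal{O}_D$ admits integer identification sequences $\gamma=(\gamma_1,\ldots,\gamma_m)$ with $m$ and the bit size of each $\gamma_i$ polynomial in $L$; setting $\widetilde{\sigma}(f):=(f(\gamma_1),\ldots,f(\gamma_m))$ and $\sigma:=\widetilde{\sigma}\circ\theta_D$ and invoking \cite[Corollary 3]{CaGiHeMaPa03} together with Theorem \ref{theo continuos} produces the unique geometrically robust constructible map $\mu^*$ that reconstructs, from the evaluation vector, the coefficients of $\theta_D'$, $\theta_D''$ or $\theta_D'''$ respectively. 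This yields the required exact quiz game with winning strategy, with $\theta^*=\omega^*\circ\mu^*$ encoding the continuous interpolant.

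Finally I would apply Lemma \ref{lemma G D} directly: any such winning strategy has abstraction function $\theta^*$ of size at least $D+1$. Since the size of $\theta^*$ is by definition the dimension of the ambient space of $\mathcal{N}^*$, and the player must in particular output the coordinates of a point of $\mathcal{N}^*$, this is a lower bound for the number of arithmetic operations any algorithm realizing the reconstruction has to perform. Combined with the estimate $D+1 = 2^{\Omega(L)}$ obtained in the first step, this delivers the exponential lower bound promised. The only nontrivial point is formalizing ``continuous interpolation'' for the derivative and integral tasks as an exact quiz game, but the framework of Section \ref{subsubsec: exact game} was designed precisely for this situation and the verification is routine given Theorem \ref{theo continuos}; I do not foresee any genuine obstacle beyond what has already been handled in Lemma \ref{lemma G D}.
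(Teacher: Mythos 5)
Your proposal is correct and follows essentially the same route as the paper: the paper deduces Theorem~\ref{less technical theorem} from Lemma~\ref{lemma G D} together with the degree lower bound $\log\deg f$ on essential multiplications and the observation that $\theta_D(t)$ has degree $D$ (for all but finitely many $t$) while being evaluable with $O(\log D)$ essential multiplications, exactly as you argue. Your additional step of exhibiting an explicit winning strategy via identification sequences and \cite[Corollary 3]{CaGiHeMaPa03} is not strictly required for the lower bound itself (Lemma~\ref{lemma G D} is conditional on such a protocol being given), but it is a sound way of making precise that ``continuous interpolation'' of $\theta_D$, $\theta_D''$, $\theta_D'''$ can indeed be cast as exact quiz game protocols, so the statement is not vacuous; the paper handles this point only for the multivariate case.
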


Theorem \ref{less technical theorem} extends \cite[Proposition
22]{GiHeMaSo11}, and simplifies its proof.
%
%
\subsection{Learning in neural networks with polynomial activation functions}
\label{subsec: learning in neural networks}
Let us analyze the following architectures of neural networks with
polynomial activation functions. Let $n\in\N$ be a discrete
parameter and $\mathcal{M}:=\A^{n+1}$. For $t\in\A^1$ and
$u=(u_1,\dots,u_n)\in\A^n$, let $\theta(t,u)$ be the polynomial
$$\theta(t,u):=t(u_1X_1+\dots+u_nX_n)^n.$$
Then the formula defining $\theta(t,u)$ describes the architecture
of a two layer network with two neurons, weight vector
$(t,u_1,\dots,u_n)$, inputs $X_1,\dots,X_n$, one output and two
activation functions, namely the polynomials $Y^n$ and $Y$, where
$Y$ is a new indeterminate. This network is evaluable by $2n+2
\lceil \log n \rceil$ arithmetic operations, including $2 \lceil
\log n \rceil$ essential multiplications. Hence the family of
polynomials $\theta$ can be represented by a robust arithmetic
circuit of size $2n+2 \lceil\log n\rceil$. Thus for
$\mathcal{O}:=\text{im\ }\theta$, $\mu:=\text{id\ }\mathcal{M}$ and
$\omega:=\theta$, we have that $\theta=\omega\circ\mu$ and
$\theta:\mathcal{M}\rightarrow\mathcal{O}$ is an abstraction
function associated with the geometrically robust constructible map
$\mu$ and the polynomial map $\omega$. By virtue of Proposition
\ref{propo M}, and the comment that follows it,
$(\theta,\mu,\omega)$ belongs to a compatible collection
$\mathcal{H}$ of abstraction functions.

More precisely, using $\big(4(n+2\lceil\log
n\rceil+1)^2+2\big)(4n+4\lceil\log n\rceil+1)$ arithmetic operations
in $\C$, we may check whether for given weight vectors $(t,u)$ and
$(t',u')$ of $\mathcal{M}$ the identity $\theta(t,u)=\theta(t',u')$
holds for the target functions $\theta(t,u)$ and $\theta(t',u')$ of
the neural networks given by $(t,u)$ and $(t',u')$.

The learning of a target function $\theta(t,u)$ can be interpreted
as the task of interpolating $\theta(t,u)$ in the manner prescribed
by the given network architecture. In view of \cite[Section
3.3.3]{GiHeMaSo11}, the continuous interpolation of the family of
polynomials $\theta$ can be performed, not necessarily efficiently,
using $m\geq 4(n+2\lceil\log n\rceil+1)^2$ random points of $\N^n$
of bit size at most $8 \lceil \log n\rceil + 4$ (see the end of this
subsection).

Since interpolation is a particular case of an exact quiz game
protocol, we are going to ask a more general question, namely
whether for the computation task given by $\theta$,
$\theta':=\theta$ and $\tau:=id_{\mathcal{O}}$ an approximative quiz
game protocol with winning strategy can be efficient. The answer
will be no. Hence our neural networks cannot be learned in a
continuous manner. We confirmed this theoretical conclusion by
computer experiments with the 8.3 (R2014a) Matlab version of the
standard backpropagation algorithm (see \cite[Chapter 6.1]{Hertz91})
which we adapted especially to the case of polynomial activation
functions. It is not worth to reproduce here the particular
experimental results because of the enormous errors they contain. In
particular, if we interpret the steps of the back propagation
algorithm as an infinite sequence of computations over $\C$, the
resulting algorithm cannot converge \emph{uniformly} to an exact
learning process of our neural networks.

Suppose that for the previous computation task there is given an
approximative quiz game protocol with winning strategy. Thus we have
a framed abstract data type carrier $\mathcal{O}^*$, framed data
structures $\mathcal{M}^*$ and $\mathcal{N}^*$, an abstraction
function $\theta^*:\mathcal{M}^*\rightarrow\mathcal{O}^*$ of the
compatible collection $\mathcal{H}$, associated with a geometrically
robust constructible map
$\mu^*:\mathcal{M}^*\rightarrow\mathcal{N}^*$ and a polynomial map
$\omega^*:\mathcal{N}^*\rightarrow\mathcal{O}^*$, and a surjective
geometrically robust constructible map
$\sigma:\mathcal{M}\rightarrow\mathcal{M}^*$ such that by Lemma
\ref{observation} the identities
$\theta=\omega^*\circ\mu^*\circ\sigma = \theta^*\circ\sigma$ and
$\mathcal{O}^*=\mathcal{O}$ hold.
\begin{lemma}
\label{lemma theta star size} The size of $\theta^*$ is at least
$\binom{2n-1}{n-1}\geq 2^{n-1}$.
\end{lemma}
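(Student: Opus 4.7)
The plan is to imitate, step by step, the argument of Lemma \ref{lemma order}, replacing the iterated power sum by the single power $(u_1X_1+\cdots+u_nX_n)^n$ and exploiting the fact that $\theta(t,u)$ is linear in $t$ and vanishes on $\{0\}\times\A^n$. This last property allows Corollary \ref{c:1} to turn a one-parameter family of fibres over $0$ into a cofinite set on which $\mu^*\circ\sigma$ takes the same value, after which the chain rule transfers linear independence from the derivatives of $\theta$ to the derivatives of $\mu^*\circ\sigma$.

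Concretely, fix $l_0:=\lceil\log(n+1)\rceil$ so that $2^{l_0}>n$, and define the affine curve
\[
\beta_\rho:\A^1\to\cM,\qquad \beta_\rho(t):=\bigl(t,\rho,\rho^{2^{l_0}},\ldots,\rho^{2^{(n-1)l_0}}\bigr).
\]
Then $(\theta\circ\beta_\rho)(t)=t\bigl(\rho X_1+\rho^{2^{l_0}}X_2+\cdots+\rho^{2^{(n-1)l_0}}X_n\bigr)^n$ is linear in $t$, and in particular $(\theta\circ\beta_\rho)(0)=0$ for every $\rho$. Since $\theta(0,u)=0$ for every $u\in\A^n$, Corollary \ref{c:1} implies that the set $\{(\mu^*\circ\sigma)(0,u):u\in\A^n\}$ is finite; as $\rho\mapsto(\mu^*\circ\sigma\circ\beta_\rho)(0)$ is polynomial (by Theorem \ref{theorem geometrically robust functions}$(iii)$) and takes only finitely many values, there exists a cofinite subset $\Gamma\subset\C$ on which it is constant, say equal to $v$.

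The key linear-algebra step is to show that the polynomials
\[
\frac{d}{dt}(\theta\circ\beta_\rho)(0)=(u(\rho)\cdot X)^n=\sum_{\underline\alpha}\binom{n}{\alpha_1,\dots,\alpha_n}\rho^{\alpha_1+\alpha_2\,2^{l_0}+\cdots+\alpha_n\,2^{(n-1)l_0}}X_1^{\alpha_1}\cdots X_n^{\alpha_n},
\]
indexed by $\underline\alpha$ with $\alpha_1+\cdots+\alpha_n=n$, span a $\C$-space of dimension $K:=\binom{2n-1}{n-1}$. Because each $\alpha_j\le n<2^{l_0}$, the exponents $\alpha_1+\alpha_2 2^{l_0}+\cdots+\alpha_n 2^{(n-1)l_0}$ are pairwise distinct (they are just the base-$2^{l_0}$ digits), so the usual Vandermonde argument (as invoked in Lemma \ref{lemma order} via \cite[Lemma 24]{GiHeMaSo11}) gives a non-empty Zariski open $\cU\subset\A^K$ such that for every $(\rho_1,\dots,\rho_K)\in\cU$ the vectors $\frac{d}{dt}(\theta\circ\beta_{\rho_j})(0)$, $j=1,\dots,K$, are $\C$-linearly independent.

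To conclude, pick $(\rho_1,\dots,\rho_K)\in\cU\cap\Gamma^K$ (non-empty because $\Gamma$ is cofinite). Applying the chain rule to the identity $\theta\circ\beta_{\rho_j}=\omega^*\circ\mu^*\circ\sigma\circ\beta_{\rho_j}$ at $t=0$ and using that $(\mu^*\circ\sigma\circ\beta_{\rho_j})(0)=v$ for every $j$, we obtain
\[
\tfrac{d}{dt}(\theta\circ\beta_{\rho_j})(0)=d\omega^*(v)\cdot\tfrac{d}{dt}(\mu^*\circ\sigma\circ\beta_{\rho_j})(0),
\]
so the images under the single linear map $d\omega^*(v)$ span a space of dimension $K$; hence the vectors $\frac{d}{dt}(\mu^*\circ\sigma\circ\beta_{\rho_j})(0)$ are themselves $\C$-linearly independent. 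This forces the ambient dimension of $\cN^*$, and therefore the size of $\theta^*$, to be at least $K=\binom{2n-1}{n-1}\ge 2^{n-1}$. The only subtle point is the same as in Lemma \ref{lemma order}, namely the combination of Corollary \ref{c:1} with the linearity of $\theta\circ\beta_\rho$ in $t$ to guarantee that $(\mu^*\circ\sigma\circ\beta_\rho)(0)$ is generically constant; everything else is the explicit Vandermonde computation.
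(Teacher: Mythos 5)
Your proof is correct and has the same skeleton as the paper's: use $\theta(0,u)=0$ together with Corollary \ref{c:1} to make $\rho\mapsto(\mu^*\circ\sigma\circ\beta_\rho)(0)$ constant (equal to some $v$) on a dense set, then push the linear independence of the derivatives $\frac{d}{dt}(\theta\circ\beta_{\rho_j})(0)$ through the single linear map $d\omega^*(v)$ to force $\dim\mathcal{N}^*\ge K=\binom{2n-1}{n-1}$. The one place where you genuinely diverge is in manufacturing the $K$ linearly independent derivatives. You restrict $u$ to the one-parameter curve $u(\rho)=(\rho,\rho^{2^{l_0}},\ldots,\rho^{2^{(n-1)l_0}})$ with $l_0:=\lceil\log(n+1)\rceil$, so that $\alpha_i\le n<2^{l_0}$ forces the exponents $\alpha_1+\alpha_2 2^{l_0}+\cdots+\alpha_n 2^{(n-1)l_0}$ to be pairwise distinct, and then invoke a univariate generalized Vandermonde argument over $(\rho_1,\dots,\rho_K)\in\A^K$; this is exactly the $2^{l_0}$-ary encoding trick of Lemma \ref{lemma order}. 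The paper instead takes $\beta_\rho(t):=(t,\rho)$ for arbitrary $\rho\in\A^n$ and exhibits a nonempty Zariski open $\mathcal{U}\subset\A^{K\times n}$ of tuples making the multivariate matrix $\big(\frac{n!}{\alpha_1!\cdots\alpha_n!}\,\rho_{i1}^{\alpha_1}\cdots\rho_{in}^{\alpha_n}\big)_{\underline{\alpha}\in M_n,\,1\le i\le K}$ nonsingular, relying only on $\C$-linear independence of the degree-$n$ monomials in $U_1,\dots,U_n$. Your route re-uses the Lemma \ref{lemma order} machinery verbatim at the cost of the extra encoding step; the paper's is marginally more direct. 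Both yield the identical bound $K$. One minor imprecision: a polynomial map $\A^1\to\mathcal{N}^*$ with finite image is in fact constant, so your $\Gamma$ is all of $\C$ rather than merely cofinite (the paper's own phrasing carries the same harmless slack), but nothing downstream depends on this.
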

\begin{proof}
The proof is similar to that of Lemma \ref{lemma order}. Observe
that
\[
\theta(t,u)=t\sum_{\stackrel{(\alpha_1,\dots,\alpha_n)\in\Z_{\geq
0}}{\scriptscriptstyle
\alpha_1+\dots+\alpha_n=n}}\frac{n!}{\alpha_1!\dots\alpha_n!}
\,u_1^{\alpha_1}\dots u_n^{\alpha_n} X_1^{\alpha_1}\dots
X_n^{\alpha_n}
\]
holds for any $t\in\A^1$ and $u=(u_1,\dots,u_n)\in\A^n$.

Let $U_1,\dots,U_n$ be new indeterminates and let
$M_n:=\{(\alpha_1,\dots,\alpha_n)\in\Z^n_{\geq
0}:\alpha_1+\dots+\alpha_n=n\}$. Since the $K:=\binom{2n-1}{n-1}$
monomials $U_1^{\alpha_1}\cdots U_n^{\alpha_n}$,
$\underline{\alpha}\in M_n$, are $\C$--linearly independent, we may
conclude that there exists a non--empty Zariski open subset
$\mathcal{U}$ of $\A^{K\times n}$ such that for any point
$(\rho_1,\dots,\rho_K)\in\mathcal{U}$ with
$\rho_i=(\rho_{i1},\dots,\rho_{in})\in\A^n$, $1\leq i \leq K$, the
$(K\times K)$--matrix
\[
{\tt M}_{\rho_1,\dots,\rho_K}:= \Big (
\frac{n!}{\alpha_1!\dots\alpha_n!}\,\rho_{i1}^{\alpha_1}\dots
\rho_{in}^{\alpha_n} \Big )
_{\underline{\alpha}=(\alpha_1,\dots,\alpha_n)\in M_n,\, 1 \leq i
\leq K}
\]
is nonsingular.

Let $\rho=(\rho_1,\dots,\rho_n)\in\A^n$ and let
$\beta_{\rho}:\A^1\rightarrow\mathcal{M}$ be defined by
$\beta_{\rho}(t):=(t,\rho)$. Then the map
$t\mapsto(\theta\circ\beta_{\rho})(t)$ is represented by the
polynomial
\[
(\theta\circ\beta_\rho)(t)=t
\sum_{\stackrel{(\alpha_1,\dots,\alpha_n)\in\Z_{\geq
0}}{\scriptscriptstyle
\alpha_1+\dots+\alpha_n=n}}\frac{n!}{\alpha_1!\dots\alpha_n!}
\rho_1^{\alpha_1}\dots \rho_n^{\alpha_n} X_1^{\alpha_1}\dots
X_n^{\alpha_n},
\]
and is therefore linear in $t$. On the other hand, the map
$\mu^*\circ\sigma\circ\beta_{\rho}:\A^1\to\cN^*$ is geometrically
robust and constructible, and hence polynomial by Theorem
\ref{theorem geometrically robust functions}$(iii)$. Thus
$\mu^*\circ\sigma\circ\beta_{\rho}$ is holomorphic. Since any
$u\in\A^n$ satisfies the condition $\theta(0,u)=0$ we conclude from
Corollary \ref{c:1} that the set $\{ (\mu^*\circ\sigma)(0,u):
u\in\A^n \}$ is finite. Hence the set
$\{(\mu^*\circ\sigma\circ\beta_{\rho})(0):\rho\in\A^n \}$ is also
finite. Therefore there exists a non--empty Zariski open subset
$\Gamma$ of $\A^n$ such that for any point $\rho\in\Gamma$, the
value $(\mu^*\circ\sigma\circ\beta_{\rho})(0)$ is the same, say $v$.
Observe now that the set $\mathcal{U}\cap \Gamma^K$ is non--empty,
because $\mathcal{U}$ and $\Gamma^K$ are non--empty Zariski open
subsets of $\A^{K\times n}$. Hence there exist elements
$\rho_1,\dots,\rho_K$ of $\Gamma$ such that the complex $(K\times
K)$--matrix ${\tt M}_{\rho_1,\dots,\rho_K}$ is nonsingular.

Therefore the vectors $\frac{d}{dt}
(\theta\circ\beta_{\rho_1})(0),\dots,\frac{d}{dt}
(\theta\circ\beta_{\rho_K})(0)$ are $\C$--linearly independent. The
assumption that the player has a winning strategy for the given
approximative quiz game implies
$$\theta\circ\beta_{\rho_j}=\omega^*\circ\mu^*\circ\sigma\circ\beta_{\rho_j}$$
for any $1 \leq j \leq K$. Since
$\mu^*\circ\sigma\circ\beta_{\rho_j}$ is holomorphic for $1 \leq j
\leq K$, we may apply the chain rule to this situation to conclude
\begin{align*}
\frac{d}{dt}(\theta\circ\beta_{\rho_j})(0) &
=d \omega^*\big((\mu^*\circ\sigma\circ\beta_{\rho_j})(0)\big)\cdot
\frac{d}{dt}(\mu^*\circ\sigma\circ\beta_{\rho_j})(0) \\
& = d\omega^*(v)\cdot \frac{d}{dt}
(\mu^*\circ\sigma\circ\beta_{\rho_j})(0)
\end{align*}
for $1 \leq j \leq K$. We infer that the vectors
$\frac{d}{dt}(\mu^*\circ\sigma\circ\beta_{\rho_1})(0),
\dots,\frac{d}{dt}(\mu^*\circ\sigma\circ\beta_{K})(0)$ generate a
$\C$--linear space of dimension $K$. This implies that the size of
$\mathcal{N}^*$, and hence the size of $\theta^*$, is at least
$K=\binom{2n-1}{n-1} \geq 2^{n-1}$.
\end{proof}

Suppose now that it is possible to learn continuously for $n>3$ the
neural networks corresponding to our network architecture. Then
there exists a geometrically robust constructible map
$\widetilde{\sigma}:\mathcal{O}\rightarrow\mathcal{M}$ such that the
identity $\theta=\theta\circ\widetilde{\sigma}\circ\theta$ holds.
This contradicts Lemma \ref{lemma theta star size} with
$\mathcal{O}^*:=\mathcal{O}$, $\mathcal{M}^*:=
\text{im}\left(\widetilde{\sigma}\circ \theta\right)$,
$\theta^*:=\theta\mid_{{\mathcal M}^*}$,
$\mu^*:=\text{id}\mid_{\mathcal{M}^*}=\mu\mid_{\mathcal{M}^*}$,
${\mathcal N}^* := {\mathcal M}^*$,
$\omega^*:=\theta\mid_{\mathcal{N}^*}=\omega\mid_{\mathcal{N}^*}$
and $\sigma:=\widetilde{\sigma}\circ\theta$, because in this case
the size of $\theta^*$ is at most $n+1$, which is strictly smaller
than $2^{n-1}$. Lemma \ref{lemma theta star size} implies therefore
the following learning result for neural network architectures.
\begin{theorem}\label{neural-networks:teor}
Let $n\in\N$ be a discrete parameter. There exists a two--layer
neural network architecture with two neurons, $n$ inputs, one output
and two polynomial activation functions which can be evaluated using
$2 \lceil \log n \rceil$ essential multiplications, such that there
is no continuous algorithm able to learn exactly the corresponding
neural networks.
\end{theorem}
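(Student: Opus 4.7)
The plan is to derive Theorem \ref{neural-networks:teor} as a direct consequence of Lemma \ref{lemma theta star size}, by showing that a continuous exact learning algorithm for the network architecture would produce an approximative quiz game with winning strategy whose abstraction function has size much smaller than the lower bound the lemma provides.

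First, I would pin down the architecture. The family $\theta(t,u) := t(u_1X_1+\dots+u_nX_n)^n$ with $\mathcal{M}:=\A^{n+1}$, already considered in Subsection \ref{subsec: learning in neural networks}, describes a two--layer network with two neurons, activation functions $Y^n$ and $Y$, weight vector $(t,u_1,\dots,u_n)$, inputs $X_1,\dots,X_n$ and one output. The count of $2\lceil \log n\rceil$ essential multiplications has already been checked there, so the architectural claim of the theorem is built in.

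Second, I would translate ``continuous exact learning'' into the language of the paper. An exact learning algorithm that is continuous with respect to the Euclidean topology must, by Theorem \ref{theo continuos}, be represented by a geometrically robust constructible map $\widetilde{\sigma}:\mathcal{O}\to\mathcal{M}$ recovering some weight vector realizing a given target function, i.e.\ satisfying $\theta\circ\widetilde{\sigma}=\text{id}_{\mathcal{O}}$, which is equivalent to $\theta=\theta\circ\widetilde{\sigma}\circ\theta$. Assuming such a $\widetilde{\sigma}$ exists, I would convert it into a winning strategy for the approximative quiz game associated with $\theta'=\theta$ and $\tau=\text{id}_{\mathcal{O}}$ by setting $\sigma:=\widetilde{\sigma}\circ\theta$, $\mathcal{M}^*:=\text{im}(\widetilde{\sigma}\circ\theta)\subset\mathcal{M}$, $\mathcal{N}^*:=\mathcal{M}^*$, $\mu^*:=\text{id}_{\mathcal{M}^*}$, $\omega^*:=\theta|_{\mathcal{M}^*}$ and $\theta^*:=\omega^*\circ\mu^*$. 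The identity $\theta^*\circ\sigma=\theta\circ\widetilde{\sigma}\circ\theta=\theta$ exhibits this as a winning strategy, and since $\mathcal{M}^*\subset\mathcal{M}=\A^{n+1}$ the size of $\theta^*$ is at most $n+1$.

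Third, I would invoke Lemma \ref{lemma theta star size} applied to this same quiz game, which forces the size of $\theta^*$ to be at least $\binom{2n-1}{n-1}\geq 2^{n-1}$. For $n>3$ one has $n+1<2^{n-1}$, so the two bounds are incompatible and the hypothetical $\widetilde{\sigma}$ cannot exist, yielding the theorem. The only genuinely subtle point in this plan is the translation step via Theorem \ref{theo continuos}; everything else is bookkeeping with the diagrams and commutative identities introduced in Subsections \ref{subsubsec: exact game} and \ref{subsec: approximative game}. Since the heavy lifting (producing the $\C$--linearly independent directional derivatives via the chain rule on $\omega^*$) is already carried out in Lemma \ref{lemma theta star size}, no further obstacle is expected.
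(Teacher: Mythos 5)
Your proposal reproduces the paper's argument essentially verbatim: the same choice of $\widetilde{\sigma}$, the same identity $\theta=\theta\circ\widetilde{\sigma}\circ\theta$ derived from continuity via Theorem \ref{theo continuos}, and the exact same instantiation $\mathcal{M}^*:=\text{im}(\widetilde{\sigma}\circ\theta)$, $\mathcal{N}^*:=\mathcal{M}^*$, $\mu^*:=\text{id}|_{\mathcal{M}^*}$, $\omega^*:=\theta|_{\mathcal{N}^*}$ that yields a winning strategy of size at most $n+1$, contradicting the lower bound $\binom{2n-1}{n-1}\geq 2^{n-1}$ of Lemma \ref{lemma theta star size} for $n>3$. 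This is the paper's own proof; no substantive difference.
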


To conclude, we discuss the question how many random points are
sufficient to interpolate the target functions of a general neural
network architecture with polynomial activation functions.

Let be given a neural network architecture with $n$ inputs
$X_1,\dots,X_n$ and $K$ neurons and let $r$ be the length of the
corresponding weight vector. We suppose that all activation
functions are given by univariate polynomials which can be evaluated
by ordinary division--free arithmetic circuits with parameters in
$\R$ using at most $L$ essential multiplications. Thus any weight
vector of $\A^r$ defines a (complex) neural network which can be
evaluated using at most $KL$ essential multiplications. Denote for
any $u\in\A^r$ the target function of the corresponding neural
network by $\theta(u)\in\C[X_1,\dots,X_n]$. Let $m\geq 4(KL+n+1)^2
+2$ and chose $m$ random points $\gamma_1,\dots,\gamma_m\in\N^n$ of
bit size at most $4(KL+1)$. Then $\mathcal{M}:=\{
\theta(u)(\gamma_1),\dots,\theta(u)(\gamma_m) : u\in\A^r \}$ is a
constructible subset of $\A^m$. From \cite[Section
3.3.3]{GiHeMaSo11} we deduce that the map which assigns to each
element $(\theta(u)(\gamma_1),\dots,\theta(u)(\gamma_m))$ of $\cM$
the coefficient vector of the polynomial $\theta(u)$ is
geometrically robust and constructible (over $\R$). This map solves
in continuous (but not necessarily efficient) manner the
interpolation problem given by $\gamma_1,\dots,\gamma_m$ of the
family of polynomial functions $\theta$.
\subsection{Elimination}
Finally we are going to analyze two series of examples of geometric
elimination problems from the point of view of approximative quiz
games. The first one is concerned with a parameterized family of
elimination problems defined on a fixed hypercube and the second one
is related to the computation of characteristic polynomials in
linear algebra.
%
%
\subsubsection{A parameterized family of projections
of a hypercube}
Let $n\in\N$ be a discrete parameter and let
$\mathcal{M}:=\A^{n+1}$. Let $X_1,\dots,X_n,Y$ be indeterminates
over $\C$. For $t\in\A^1$ and $u=(u_1,\dots,u_n)\in\A^n$, let
$\theta(t,u)$ and $\theta'(t,u)$ be the following polynomials:
\begin{align*}
\theta(t,u)&:=\sum_{1 \leq i\leq n} 2^{i-1} X_i +t \prod_{1 \leq
i\leq n} (1+ (u_i-1)X_i),\\
\theta'(t,u)&:=\prod_{\epsilon\in\{ 0,1
\}^n}\big(Y-\theta(t,u)(\epsilon)\big) =\prod_{0 \leq j< 2^n}
\bigg(Y-\Big(j+t \prod_{1 \leq i\leq n} u_i^{[j]_i}\Big)\bigg),
\end{align*}
where $[j]_i$ denotes the $i$--th digit of the binary representation
of the integer $j$ for $0\leq j< 2^n$ and $1 \leq i \leq n$. Let
$\mathcal{O}:=\text{im\ }\theta$, $\mathcal{O}':=\text{im\
}\theta'$, $\mathcal{N}:=\mathcal{M}$,
$\mu:=\text{id}_{\mathcal{M}}$, $\omega:=\theta$ and observe that
the family of polynomials $\theta$ is evaluable by a robust
arithmetic circuit of size $5n$. We may therefore deduce from
Proposition \ref{propo M}, and the comment following it, that
$(\theta,\mu,\omega)$ belongs to a suitable compatible collection
$\mathcal{H}$ of abstraction functions. We suppose that $\theta'$ is
an abstraction function associated with a geometrically robust
constructible map $\mu':\mathcal{M}\to\mathcal{N}'$ and a polynomial
map $\omega:\mathcal{N}'\to\mathcal{O}'$, such that
$(\theta',\mu',\omega')$ belongs to the compatible collection
$\mathcal{H}$.

Observe that for $t\in\A^1$ and $u\in\A^n$, the formulas $$(\exists
X_1)\dots(\exists X_n) (X_1^2-X_1=0 \wedge \dots \wedge X_n^2-X_n=0
\wedge Y- \theta(t,u)(X_1,\dots, X_n)=0 )$$ and $ \theta'(t,u)(Y)=0$
are logically equivalent. In fact,
$\theta'(t,u)=\prod_{\epsilon\in\{ 0,1
\}^n}(Y-\theta(t,u)(\epsilon))$ is the elimination polynomial of the
projection of the hypercube $\{ 0,1 \}^n$ along $\theta(t,u)$.
Therefore there exists a geometrically robust constructible map
$\tau:\mathcal{O}\to\mathcal{O}'$ such that
$\tau\circ\theta=\theta'$ holds.

Suppose now that for the computation task determined by $\theta$ and
$\tau$ there is given a winning strategy of the approximative  quiz
game protocol. Thus we have a framed abstract data type carrier
$\mathcal{O}^*$, framed data structures $\mathcal{M}^*$ and
$\mathcal{N}^*$, an abstraction function $\theta^*$ of the
compatible collection $\mathcal{H}$, associated with a geometrically
robust constructible map
$\mu^*:\mathcal{M}^*\rightarrow\mathcal{N}^*$ and a polynomial map
$\omega:\mathcal{N}^*\rightarrow\mathcal{O}^*$, and a surjective
geometrically robust constructible map
$\sigma:\mathcal{M}\to\mathcal{M}^*$ such that by Lemma
\ref{observation} the identities
$\theta'=\omega^*\circ\mu^*\circ\sigma=\theta^*\circ\sigma$ and
$\mathcal{O}'=\mathcal{O}^*$ hold.

\begin{lemma}
\label{lemma size of theta}
The size of $\theta^*$ is at least $2^n$.
\end{lemma}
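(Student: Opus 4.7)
The plan is to follow the strategy of Lemma \ref{lemma order} and Lemma \ref{lemma theta star size}, restricting to a family of curves $t \mapsto \beta_u(t) := (t,u)$ in $\cM = \A^{n+1}$ parameterized by $u \in \A^n$, and extracting $2^n$ linearly independent tangent vectors on the $\theta'$ side that must lift to linearly independent tangent vectors on the $\mu^* \circ \sigma$ side via the chain rule and the winning-strategy identity $\theta' = \omega^* \circ \mu^* \circ \sigma$.

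First I would differentiate $\theta'(t,u)(Y)$ at $t=0$, obtaining
\begin{equation*}
\frac{d}{dt}(\theta' \circ \beta_u)(0) = -\sum_{j=0}^{2^n-1} \bigg(\prod_{i=1}^n u_i^{[j]_i}\bigg)\, P_j(Y),
\end{equation*}
where $P_j(Y) := \prod_{k \neq j,\, 0\leq k\leq 2^n-1}(Y-k)$. The polynomials $P_0,\dots,P_{2^n-1}$ are (up to nonzero scalars) the Lagrange basis for interpolation at the $2^n$ distinct nodes $0,1,\dots,2^n-1$, hence $\C$--linearly independent; and the $2^n$ squarefree monomials $m_j(u) := \prod_i u_i^{[j]_i}$ are pairwise distinct and therefore also $\C$--linearly independent as functions on $\A^n$. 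A standard Vandermonde-type argument then produces a non-empty Zariski open subset $\mathcal{U}$ of $(\A^n)^{2^n}$ on which the matrix $\big(m_j(u^{(s)})\big)_{0\leq j\leq 2^n-1,\,1\leq s\leq 2^n}$ is nonsingular, so that the corresponding $2^n$ vectors $\frac{d}{dt}(\theta' \circ \beta_{u^{(s)}})(0)$ are $\C$--linearly independent in $\C[Y]$.

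Next I would observe that $\theta(0,u) = \sum_{i=1}^n 2^{i-1} X_i$ does not depend on $u$, so all points of $\{0\}\times\A^n$ have the same image under $\theta$. By Corollary \ref{c:1}, the set $\{(\mu^*\circ\sigma)(0,u): u \in \A^n\}$ is therefore finite; since $u \mapsto (\mu^*\circ\sigma)(0,u)$ is constructible, one of its fibres contains a non-empty Zariski open subset $\Gamma$ of $\A^n$ on which the map takes a constant value $v \in \cN^*$. The set $\mathcal{U} \cap \Gamma^{2^n}$ is a non-empty Zariski open subset of $(\A^n)^{2^n}$, so I may select the $u^{(1)},\dots,u^{(2^n)}$ from the previous step inside $\Gamma$, ensuring $(\mu^*\circ\sigma)(0,u^{(s)}) = v$ for all $s$.

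Finally, each map $\mu^*\circ\sigma\circ\beta_{u^{(s)}}\colon \A^1 \to \cN^*$ is geometrically robust and constructible on the smooth, hence normal, variety $\A^1$, and so polynomial by Theorem \ref{theorem geometrically robust functions}$(iii)$; in particular it is holomorphic at $0$. Applying the chain rule to $\theta' = \omega^* \circ \mu^* \circ \sigma$ at $t=0$ yields
\begin{equation*}
\frac{d}{dt}(\theta'\circ\beta_{u^{(s)}})(0) = d\omega^*(v)\cdot \frac{d}{dt}(\mu^*\circ\sigma\circ\beta_{u^{(s)}})(0)
\end{equation*}
for $s=1,\dots,2^n$. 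Since the left-hand sides span a $\C$--vector space of dimension $2^n$ and $d\omega^*(v)$ is a linear map, the vectors $\frac{d}{dt}(\mu^*\circ\sigma\circ\beta_{u^{(s)}})(0)$ must generate a $\C$--linear subspace of dimension $2^n$ inside the ambient space of $\cN^*$, proving that the size of $\cN^*$, and hence that of $\theta^*$, is at least $2^n$. The only delicate point in this plan is the Vandermonde-type nonsingularity for the squarefree-monomial evaluation matrix, but that reduces to the elementary principle that $\C$--linearly independent polynomial functions can be separated by finitely many evaluations at generic points.
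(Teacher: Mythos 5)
Your proof is correct and follows essentially the same route as the paper's: restrict to the lines $t\mapsto(t,u)$, use Corollary \ref{c:1} to pin down $(\mu^*\circ\sigma)(0,u)$ on a Zariski open $\Gamma$, exhibit $2^n$ parameters whose derivative vectors are $\C$--linearly independent, and then transfer this via the chain rule and $\theta'=\omega^*\circ\mu^*\circ\sigma$. The only difference is cosmetic: where the paper cites \cite[\S 4.2]{CaGiHeMaPa03} for the linear independence of the coefficient polynomials $L_1,\dots,L_{2^n}$, you derive it directly by writing $\frac{\partial\theta'}{\partial t}(0,u)$ in the Lagrange basis $P_j(Y)=\prod_{k\neq j}(Y-k)$ and observing that the multilinear monomials $\prod_i u_i^{[j]_i}$ are distinct, which is a transparent and entirely equivalent presentation of the same fact.
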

\begin{proof}
Let $T,U_1,\dots,U_n$ be new indeterminates, $U:=(U_1,\dots,U_n)$
and let $F:=\prod_{0\le j<2^n} (Y-(j + T \prod_{1\leq i\leq n}
U_i^{[j]_i} ))$. We write $F=Y^{2^n} + B_1 Y^{2^n -1} +\dots+
B_{2^n}$ with $B_k\in\C[T,U]$ for $1\leq k\leq 2^n$. Arguing as in
\cite[\S 4.2]{CaGiHeMaPa03} we deduce that for $1 \leq k \leq 2^n$,
the coefficient $B_k$ is of the form
\[
B_k=  (-1)^k \sum_{0\leq j_1<\dots<j_k<2^n} j_1\dots j_k + T L_k +
\text{\ terms\ of\ higher\ degree\ in\ $T$,}
\]
where $L_1,\dots,L_{2^n} \in\C[U]=\C[U_1,\dots,U_n]$ are
$\C$--linearly independent. Therefore there exists a non--empty
Zariski open subset $\mathcal{U}$ of $\A^{2^n\times n}$ such that
for any point $(u_1,\dots,u_{2^n})\in\mathcal{U}$ with
$u_1,\dots,u_{2^n}\in\A^n$, the $(2^n\times 2^n)$--matrix
\begin{equation}
{\tt M}_{L_1\klk L_{2^n}}:=\big(L_k(u_l)\big)_{1\leq k,l \leq 2^n}
\label{matrix}
\end{equation}
is nonsingular.

Since any $u\in\A^n$ satisfies the condition
$\theta(0,u)=\sum_{1\leq i\leq n} 2^{i-1} X_i$ and $\theta(0,u)$ is
therefore constant, we conclude from Corollary \ref{c:1} that the
set $\{(\mu^*\circ\sigma)(0,u):u\in\A^n \}$ is finite. Hence there
exists a non--empty Zariski open subset $\Gamma$ of $\A^n$ such that
for any point $u\in\Gamma$, the value $(\mu^*\circ\sigma)(0,u)$ is
the same, say $v$. On the other hand $\mathcal{U}\cap \Gamma^{2^n}$
is nonempty. Therefore there exist elements
$u_1,\dots,u_{2^n}\in\Gamma$ such that the $(2^n\times 2^n)$--matrix
${\tt M}_{L_1\klk L_{2^n}}$ of \eqref{matrix} is non--singular.

The maps $\mu^*\circ\sigma:\A^{n+1}\to\cN^*$ and
$\theta'(t,u):\A^{n+1}\to\cO^*$ are geometrically robust and
constructible, and hence polynomial by Theorem \ref{theorem
geometrically robust functions}$(iii)$. For $1\leq k\leq 2^n$ and
$t\in\A^1$, let $\varepsilon_k(t):=(\mu^*\circ\sigma)(t,u_k)$ and
$\delta_k(t):=\theta'(t,u_k)$. Observe that $\varepsilon_k$ and
$\delta_k$ are polynomial maps with domain of definition $\A^1$ and
therefore holomorphic. Moreover we have
$\varepsilon_k(0)=(\mu^*\circ\sigma)(0,u_k)=v$ for $1\leq k\leq
2^n$. Since $\theta'=\omega^*\circ\mu^*\circ\sigma$, we deduce
$\delta_k=\omega^*\circ\varepsilon_k$. To this composition we may
apply the chain rule to conclude that
\[
\frac{d\delta_k}{dt}(0)=d\omega^*(\varepsilon_k(0))\cdot
\frac{d\varepsilon_k}{dt}(0)=d\omega^*(v)\cdot\frac{d\varepsilon_k}{dt}(0).
\]

Observe now that $L_1(u),\dots,L_{2^n}(u)$ are the coefficients of
the polynomial $\frac{\partial F }{\partial T}(0,u)$. This implies
that
$$d\omega^*(v)\cdot \frac{d\varepsilon_k}{dt}(0)=
\frac{d\delta_k}{dt}(0)=\frac{\partial\theta'}{\partial t} (0,u_k)=
\big(L_1(u_k),\dots,L_{2^n}(u_k)\big)$$
for $1\leq k\leq 2^n$. From the non--singularity of the $(2^n\times
2^n)$--matrix ${\tt M}_{L_1\klk L_{2^n}}$ of \eqref{matrix} we
deduce that the vectors $\frac{d\varepsilon_1}{dt}(0), \dots,
\frac{d\varepsilon_{2^n}}{dt}(0)$ generate a $\C$--linear space of
dimension $2^n$. This implies that the size of $\mathcal{N}^*$, and
hence the size of $\theta^*$, is at least $2^n$.
\end{proof}

We are now going to elaborate an important aspect of this family of
examples. Observe that any polynomial $\theta(t,u)$ can be evaluated
by a division--free arithmetic circuit using $n-1$ essential
multiplications. From \cite[Section 3.3.3]{GiHeMaSo11} we deduce the
following facts: \\
there exist $K:=16n^2 + 2$ points $\xi_1,\dots,\xi_K \in\Z^n$ of bit
length at most $4n$ such that for any two polynomials
$f,g\in\ol{\mathcal{O}}$, the equalities $f(\xi_1)=g(\xi_1),\dots,
f(\xi_K)=g(\xi_K)$ imply $f=g$. Thus the polynomial map
$\tilde{\sigma}:\mathcal{O}\to\A^K$ defined by
$\tilde{\sigma}(f):=(f(\xi_1),\dots,f(\xi_K))$ is injective.
Moreover $\mathcal{M}^*:=\tilde{\sigma}(\mathcal{O})$ is an
irreducible constructible subset of $\A^K$. Finally the
constructible map $\Phi:=\tilde{\sigma}^{-1}$ which maps
$\mathcal{M}^*$ onto $\mathcal{O}$ is geometrically robust.

For $\epsilon\in\{0,1\}^n$, we denote by
$\Phi_{\epsilon}:\mathcal{M}^*\to\A^1$ the map
$\Phi_{\epsilon}(v):=\Phi(v)(\epsilon)$. One sees easily that
$\Phi_{\epsilon}$ is a geometrically robust constructible function.
Observe that the identities
$$\Phi_{\epsilon}(\tilde{\sigma}(\theta(t,u)))=\Phi(\tilde{\sigma}(\theta(t,u))(\epsilon))
=(\tilde{\sigma}^{-1}\circ\tilde{\sigma})(\theta(t,u))(\epsilon)=\theta(t,u)(\epsilon)$$
hold for any $t\in\A^1$ and $u\in\A^n$.

For a given point $v\in\mathcal{M}^*$, let
$$P(v,Y):=\prod_{\epsilon\in \{ 0,1 \}^n} (Y-\Phi_{\epsilon}(v)):=
\prod_{\epsilon\in \{ 0,1 \}^n} (Y-\Phi(v)(\epsilon))$$
and let $\mathcal{O}^*:=\{ P(v,Y):v\in\mathcal{M}^* \}$. Then the
coefficients of $P$ with respect to $Y$ are geometrically robust
constructible functions with domain of definition $\mathcal{M}^*$
and $\mathcal{O}^*$ is a framed abstract data type carrier. Hence
the map $\theta^*:\mathcal{M}^*\to\mathcal{O}^*$,
$\theta^*(v):=P(v,Y)$ is geometrically robust and constructible.
Suppose now that there is given a framed data structure
$\mathcal{N}^*$ and that $\theta^*$ is an abstraction function
associated with a geometrically robust constructible map
$\mu^*:\mathcal{M}^*\to\mathcal{N}^*$ and a polynomial map
$\omega^*:\mathcal{N}^*\to\mathcal{O}^*$, such that
$(\theta^*,\mu^*,\omega^*)$ belongs to the compatible collection
$\mathcal{H}$.

From the identities
$$P(\tilde{\sigma}(\theta(t,u)), Y)=\! \prod_{\epsilon\in\{ 0,1 \}^n}\!(Y
- \Phi_{\epsilon}(\tilde{\sigma}(\theta(t,u))))
=\!\prod_{\epsilon\in\{ 0,1 \}^n} \!(Y-\theta(t,u)(\epsilon))=
\theta'(t,u)$$
we deduce that $\omega^*\circ\mu^*\circ\tilde{\sigma}\circ\theta=
\theta^*\circ\tilde{\sigma}\circ\theta=\theta'$ holds. This means
that $\tilde{\sigma}\circ\theta$ and $\mu^*$ define a winning
strategy for the exact quiz game protocol of the computation task
given by $\theta$ and $\tau$. This implies by Lemma \ref{lemma size
of theta} that the size of $\mathcal{N}^*$ is at least $2^n$. In
other words, any representation of the polynomial $P$ has size
exponential in $n$.

We argue now that the polynomial $P$ is a natural elimination
object. Let $\Theta:=\sum_{1\leq i\leq n} 2^{i-1}X_i + T\prod_{1\leq
i\leq n}(1 + (U_i-1)X_i)$ and let $V_1,\dots,V_k$ be new
indeterminates. Observe that the existential first--order formula
\begin{align}
&(\exists X_1)\dots(\exists X_n)(\exists T)(\exists
U_1)\dots(\exists U_n)\nonumber\\  &\bigg(\bigwedge_{1\leq i\leq
n}X_i^2-X_i=0 \wedge \bigwedge_{1\leq k\leq K} V_k=\Theta(T,U,\xi_k)
\wedge Y=\Theta(T,U,X) \bigg) \label{formula}
\end{align}
describes the constructible subset
$\{(v,y)\in\A^{K+1}:v\in\mathcal{M}^*,y\in\C, P(v,y)=0 \}$ of
$\A^{K+1}$. Interpreting the coefficients of $P$ as elements of the
function field $\C(\ol{\mathcal{M}}^*)$ of the irreducible algebraic
variety $\ol{\mathcal{M}}^*$, one sees easily that $P$ is the
greatest common divisor in $\C(\ol{\mathcal{M}}^*)[Y]$ of all
polynomials of $\C[\ol{\mathcal{M}}^*][Y]$ which vanish identically
on the constructible subset of $\A^{K+1}$ defined by formula
\eqref{formula}. Hence $P$ is an elimination polynomial
parameterized by $\mathcal{M}^*$.

Observe that the polynomials contained in the formula
\eqref{formula} can be represented by a robust arithmetic circuit of
size $O(n^3)$. Therefore the formula \eqref{formula} is also of size
$O(n^3)$. Thus we have proved the following statement.
\begin{theorem}\label{elimination-exponential-size:teor}
Let $n\in\N$ be a discrete parameter. There exists a first--order
formula of size $O(n^3)$ of the elementary theory of $\C$
determining a framed data structure $\mathcal{M}^*$ of size $O(n^2)$
and a family $P$ of univariate elimination polynomials parameterized
by $\mathcal{M}^*$ such that the following holds. Let
$\mathcal{O}^*$ be the framed abstract data type carrier defined by
the family of polynomials $P$ and
$\theta^*:\mathcal{M}^*\rightarrow\mathcal{O}^*$ the encoding of
$\mathcal{O}^*$ given by the parameterization of $P$. Then
$\theta^*$ is a surjective geometrically robust constructible map.
Moreover, for any framed data structure $\mathcal{N}^*$ and any
geometrically robust constructible map $\mu^*:\mathcal{M}^*
\rightarrow \mathcal{N}^*$ and any polynomial map
$\omega^*:\mathcal{N}^* \rightarrow \mathcal{O}^*$, such that
$(\theta^*,\mu^*,\omega^*)$ determines an abstraction function
associated with $\mu^*$ and $\omega^*$, the size of $\mathcal{N}^*$
is at least $2^n$.
\end{theorem}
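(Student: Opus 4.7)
The plan is to assemble the ingredients developed in the preceding discussion and deduce the theorem from Lemma \ref{lemma size of theta}. First I would take $\theta(t,u)$ and $\theta'(t,u)$ as defined at the start of this subsection, note that $\theta$ is evaluated by a robust arithmetic circuit of size $5n$, and invoke Proposition \ref{propo M} (and the subsequent comment) to obtain an identification sequence $\xi_1,\dots,\xi_K \in \Z^n$ of length $K=16n^2+2$ and bit length $O(n)$ for $\overline{\mathcal{O}}\times\overline{\mathcal{O}}$. This gives an injective polynomial map $\tilde{\sigma}:\mathcal{O}\to\A^K$ whose image $\mathcal{M}^*$ is an irreducible constructible subset of $\A^K$; by construction $\mathcal{M}^*$ has size $K=O(n^2)$.

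Next I would define the parameterized family $P(v,Y):=\prod_{\epsilon\in\{0,1\}^n}(Y-\Phi_\epsilon(v))$ where $\Phi_\epsilon(v):=\tilde\sigma^{-1}(v)(\epsilon)$ is geometrically robust and constructible by Theorem \ref{theo continuos} (coordinate evaluation composed with the continuous inverse of an injective polynomial map on a constructible set). The coefficients of $P$ in $Y$ are symmetric polynomials in the $\Phi_\epsilon$'s, hence geometrically robust constructible functions on $\mathcal{M}^*$ by Theorem \ref{theorem geometrically robust functions}$(iv)$, so $\mathcal{O}^*:=\{P(v,Y):v\in\mathcal{M}^*\}$ is a framed abstract data type carrier and $\theta^*:\mathcal{M}^*\to\mathcal{O}^*$, $v\mapsto P(v,Y)$, is a surjective geometrically robust constructible map. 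The first-order description is then supplied by the existential formula (\ref{formula}); counting the circuit size of $\Theta(T,U,X)$, of the $K$ equations $V_k=\Theta(T,U,\xi_k)$, and of the $n$ boolean constraints $X_i^2-X_i=0$ yields total size $O(n^3)$.

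For the lower bound, suppose we are given any framed data structure $\mathcal{N}^*$, a geometrically robust constructible $\mu^*:\mathcal{M}^*\to\mathcal{N}^*$ and a polynomial $\omega^*:\mathcal{N}^*\to\mathcal{O}^*$ with $\theta^*=\omega^*\circ\mu^*$. I would then verify that $\tilde\sigma\circ\theta:\mathcal{M}\to\mathcal{M}^*$ together with $\mu^*$ constitutes a winning strategy for the exact quiz game associated with the task $(\theta,\tau)$: by the chain of identities already established in the paragraph above the theorem,
\[
\omega^*\circ\mu^*\circ\tilde\sigma\circ\theta=\theta^*\circ\tilde\sigma\circ\theta=\theta',
\]
which is precisely the winning condition $\theta'=\theta^*\circ\sigma$. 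Since every protocol of the exact quiz game gives rise to a protocol of the approximative quiz game with the same strategy, the hypothesis of Lemma \ref{lemma size of theta} is satisfied, and the lemma forces $|\mathcal{N}^*|\ge 2^n$.

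The main conceptual obstacle, which is already overcome in Lemma \ref{lemma size of theta}, is the derivation of the $2^n$ bound from the $\C$--linear independence of the first-order Taylor coefficients $L_1,\dots,L_{2^n}$ of $F$ in $T$; the remaining work is essentially bookkeeping to confirm that the circuit size of formula (\ref{formula}) stays $O(n^3)$ and that all the maps $\Phi_\epsilon$, $\tilde\sigma$, $\theta^*$ appearing in the construction are genuinely geometrically robust constructible, which follows from Theorems \ref{theo continuos} and \ref{theorem geometrically robust functions}.
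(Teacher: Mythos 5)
Your proposal follows the paper's own proof step for step: the same construction of $\tilde{\sigma}$, $\mathcal{M}^*$, $\Phi_\epsilon$, $P$ and $\theta^*$, the same size count of formula~\eqref{formula}, and the same reduction to Lemma~\ref{lemma size of theta} via the observation that $\tilde\sigma\circ\theta$ and $\mu^*$ form an exact (hence approximative) winning strategy. The only minor discrepancy is that the paper obtains the $K=16n^2+2$ integer identification points of bit length $O(n)$ and the geometric robustness of $\Phi=\tilde\sigma^{-1}$ from cited prior work on interpolation rather than directly from Proposition~\ref{propo M}, but the substance of the argument is identical.
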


In conclusion, polynomial size formulas may produce elimination
polynomials of exponential size for any (reasonable) representation.
This conclusion is essentially the content of \cite[Theorem
4]{CaGiHeMaPa03} (see also \cite[Theorem 5]{GiHe01}, \cite[Theorem
15]{HeKuRo} and \cite[Theorem 8]{HKRPSantalo}).

The leading idea of the proof of Theorem
\ref{elimination-exponential-size:teor} was to use an existential
formula, namely \eqref{formula}, to encode a suitable interpolation
problem whose solution is the interpolation polynomial
$P\in\C(\ol{\mathcal{M}}^*)[Y]$. This encoding relates elimination
with interpolation in the sense of Section \ref{subsubsec:
multivariate polynomial interpolation} and explains the similarity
of the proofs of Theorem \ref{theorem lower bound} and Theorem
\ref{elimination-exponential-size:teor}. In fact, the example
exhibited in the proof of Theorem
\ref{elimination-exponential-size:teor} entails nothing but a
hardness result for a suitable interpolation of a polynomial family,
given by $\Theta$, on the constructible set $\mathcal{M}^*$. Using
formula \eqref{formula} we obtain finally our hardness result for
elimination, namely Theorem \ref{elimination-exponential-size:teor}.
%
%
\subsubsection{Quiz games in elementary linear algebra: Characteristic polynomials}
For every positive integer $n\in\N$, we first consider the ring
$(\mathbf{M}_n(\C), +, \times)$ of all square $n\times n$ matrices
with complex entries, with their usual addition and multiplication
operations. Let $\mathfrak{R}$ be the disjoint union of all these
rings.
\[
\mathfrak{R}:= \bigcup_{n\in \N} {\boldface{M}}_n(\C)
\]
A framed abstract data type carrier of \emph{square matrices} is now
a constructible subset of some $\mathbf{M}_n(\C)$, $n\in\N$,
contained in $\mathfrak{R}$. We are now going to introduce  two
binary elementary operations on $\mathfrak{R}$, namely the Kronecker
sum ($\oplus$) and Kronecker product ($\otimes$). However,
$(\mathfrak{R},\oplus, \otimes)$ will not be a ring, because
Kronecker sum and product are not distributive.

Recall that for two square matrices $A:=(a_{i,j})_{1\leq i,j \leq
n}\in \mathbf{M}_n(\C)$ and $B:=(b_{k,\ell})_{1\leq k,\ell \leq
m}\in \mathbf{M}_m(\C)$, seen as elements of $\mathfrak{R}$, their
\emph{Kronecker product} $A\otimes B$ is the square $(mn\times
mn)$--matrix given by the following rule:
\[ A\otimes B:=
\left(\begin{matrix}
 a_{1,1}B & a_{1,2}B & \cdots & a_{1,n}B\\
 a_{2,1}B & a_{2,2}B & \cdots & a_{2,n}B\\
 \vdots   &          &        & \vdots  \\
 a_{n,1}B & a_{m,2}B & \cdots & a_{n,n}B
\end{matrix}\right)\in \boldface{M}_{nm}(\C).
\]
Here $a_{i,j}B\in\mathbf{M}_m(\C)$ is the matrix obtained by
multiplying each entry of the matrix $B$ by the scalar $a_{i,j}$.
The \emph{Kronecker sum} $A \oplus B$ of the matrices $A$ and $B$ is
the square $(mn\times mn)$--matrix defined in terms of the Kronecker
product as follows:
\[
A\oplus B:= A \otimes \mathrm{Id}_m + \mathrm{Id}_n\otimes B\in
\mathbf{M}_{nm}(\C).
\]
Here $\mathrm{Id}_n$ and $\mathrm{Id}_m$ denote the $n\times n$ and
$m\times m$ identity matrices and $+$ is the usual addition of $nm$
square matrices.

Using the Kronecker sum and product of square matrices we are now
going to introduce a suitable abstraction function. Let $k$ be a
non-negative integer and $n:=2^k$. The image of the abstraction
function we are going to define will be a constructible subset of
the $\C$--vector space $\mathbf{M}_n(\C)$ contained in
$\mathfrak{R}$. Let us consider the framed data structure
$\mathcal{M}:=\A^{k+1}$. We represent the elements of $\mathcal{M}$
by $(s, u_1,\ldots, u_k)\in \A^{k+1}$. Let ${\mathcal N}:= \C\times
(\mathbf{M}_2(\C))^k$ and observe that it is a $\C$--vector space of
dimension $1+4k=4\log n+1$ and hence constructible.

Consider the mapping
\begin{align*}
\mu:\quad{\mathcal{M}}&\longrightarrow \mathcal N\\
 (s,u_1,\ldots, u_k) & \longmapsto  \left(s,\left(\begin{matrix}1 & 0 \\
0 & u_1\end{matrix}\right), \ldots, \left(\begin{matrix}1 & 0 \\ 0 &
u_k\end{matrix}\right)  \right),
\end{align*}
which is polynomial and hence geometrically robust and
constructible. For $s\in\C$ and $A_1,\dots,A_k\in \mathbf{M}_2(\C)$,
we define a polynomial mapping $\omega:{\mathcal N}\to
\mathbf{M}_n(\C)$ by means of the Kronecker sum and product and
$\C$--linear operations in $\mathbf{M}_n(\C)$ as follows:
\[
\omega(s, A_1, \ldots, A_k):= \left(
\bigoplus_{i=1}^{k}\left(\begin{matrix} 0 & 0 \\0 & 2^{k-i}
\end{matrix}\right)\right)+ s A_1\otimes \cdots \otimes A_k.
\]
Finally, let $\theta:=\omega\circ\mu$ and $\mathcal{O}$ be the image
of $\theta$. Then $\mathcal{O}$ is a constructible subset of
${\boldface{M}}_{n}(\C)$ contained in $\mathfrak{R}$. Moreover,
$\mu:\cM\to\cN$, $\omega:\cN\to\cO$ and hence $\theta:\cM\to\cO$ are
polynomial, the surjective mapping $\theta$ being an abstraction
function associated with $\mu$ and $\omega$. The abstraction
function $\theta:\mathcal{M}\longrightarrow {\mathcal O}$ can be
made explicit by means of the following identity:
\[
\theta(s,u_1,\dots,u_k)=\left(
\bigoplus_{i=1}^{k}\left(\begin{matrix} 0 & 0 \\0 & 2^{k-i}
\end{matrix}\right)\right)+ s \left(\begin{matrix}1 & 0 \\ 0 &
u_k\end{matrix}\right)\otimes \cdots \otimes \left(\begin{matrix}1 &
0 \\ 0 & u_1\end{matrix}\right).
\]
This means that $\theta$ can be expressed in terms of the Kronecker
sum and product using only $2k$ matrix and Kronecker operations.

Let $\chi$ be the map which associates to each element of
$\mathfrak{R}$ its characteristic polynomial. Notice that the
restriction of $\chi$ to $\mathbf{M}_n(\C)$ is a polynomial map. Let
$\mathcal{O}':=\chi(\mathcal{O})$, $\mathcal{N}':=\mathcal{M}$,
$\mu':=\text{id}_{\mathcal{M}}$, $\omega':=\chi\circ\theta$. Then
$\mathcal{O}'$ is a constructible subset of the $\C$--vector space
of univariate polynomials of degree at most $n$ and $\omega'$ is
polynomial. Hence $\mathcal{O}'$ is a framed abstract data type
carrier and $\theta':\mathcal{M}\to \mathcal{O}'$ is an abstraction
function associated with $\mu'$ and $\omega'$.
We have therefore the following commutative diagram of polynomial
maps:
\begin{center}
\begin{tikzpicture}[scale=0.5]


\node[ shape=rectangle] (v5) at (12,0)   {$\mathcal{N}$};
\node[ shape=rectangle] (v6) at (16,0)   {$\mathcal{M}$};
\node[ shape=rectangle] (v7) at (20,0)   {$\mathcal{N}'$};

\node[ shape=rectangle] (v8) at (16,4)   {$\mathcal{O}$};
\node[ shape=rectangle] (v9) at (20,4)   {$\mathcal{O}'$};

\path[color=black,->,font=\scriptsize,>=angle 90]

(v5) edge[->] node[above] {$\omega$}  (v8)
(v6) edge[->] node[below] {$\mu$}  (v5)
(v6) edge[->] node[left] {$\theta$}  (v8)

(v6) edge[->] node[left] {$\theta'$}  (v9)

(v6) edge[->] node[below] {$\mu'$}  (v7)
(v8) edge[->] node[above] {$\chi$}  (v9);


\path[color=black,->,font=\scriptsize,>=angle 90]
(v7) edge[->] node[right] {$\omega'$}  (v9);

\end{tikzpicture}
\end{center}

\begin{lemma} With these notations, the following properties hold:
\[ \scalebox{0.9}[0.9]{$\displaystyle
\left( \bigoplus_{i=1}^{k}\left(\begin{matrix} 0 & 0 \\0 & 2 \end{matrix}\right)^{k-i}\right) =
\left(\begin{matrix} 0 & 0 \\0 & 2^{k-1} \end{matrix}\right)
\oplus \cdots \oplus
\left(\begin{matrix} 0 & 0 \\0 & 2^{1} \end{matrix}\right)
\oplus
\left(\begin{matrix} 0 & 0 \\0 & 1 \end{matrix}\right)
= \left(\begin{matrix} 0 & 0 & \cdots & 0 \\
0 & 1 & \cdots & 0\\
0 & 0 & \cdots & \vdots\\
0 & 0 & \cdots & n-1\end{matrix}\right).$}
\]
Additionally,
\[
\left(\begin{matrix}1 & 0 \\ 0 & u_k\end{matrix}\right)\otimes
\cdots \otimes \left(\begin{matrix}1 & 0 \\ 0 &
u_1\end{matrix}\right)= {\rm Diag}\left(\prod_{i=1}^{k}u_i^{[j]_i}\;
:\; 0\leq j \leq n-1\right),
\]
where $\mathrm{Diag}$ denotes the diagonal matrix. Moreover, we have
\[
\theta'(s,u)=\chi_{\theta(s,u_1,\dots,u_n)}(Y)=\prod_{0 \leq j\leq
n-1} \bigg(Y-\Big(j+s \prod_{1 \leq i\leq k}
u_i^{[j]_i}\Big)\bigg)\in {\mathcal O}'.
\]
\end{lemma}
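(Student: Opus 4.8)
The plan is to establish the three displayed identities one at a time, each by a direct computation with the defining properties of the Kronecker sum and product, and then assemble the last identity about $\theta'(s,u)$ from the first two together with the definition of the characteristic polynomial.

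First I would verify the identity for the Kronecker-sum term $\bigoplus_{i=1}^{k}\left(\begin{smallmatrix}0&0\\0&2\end{smallmatrix}\right)^{k-i}$. The key observation is that for diagonal matrices $A=\mathrm{Diag}(a_0,\dots,a_{p-1})$ and $B=\mathrm{Diag}(b_0,\dots,b_{q-1})$ the Kronecker product $A\otimes B$ is the diagonal matrix whose $(qj+\ell)$-th entry is $a_j b_\ell$, and consequently $A\oplus B=A\otimes \mathrm{Id}_q+\mathrm{Id}_p\otimes B$ is the diagonal matrix with $(qj+\ell)$-th entry $a_j+b_\ell$. Since $\left(\begin{smallmatrix}0&0\\0&2\end{smallmatrix}\right)^{k-i}=\left(\begin{smallmatrix}0&0\\0&2^{k-i}\end{smallmatrix}\right)=\mathrm{Diag}(0,2^{k-i})$, an easy induction on $k$ (unwinding the iterated Kronecker sum with the convention that the index $i$ runs from $1$ to $k$, so the last summand carries the exponent $2^0=1$) shows that the $j$-th diagonal entry of the resulting $n\times n$ matrix is $\sum_{i=1}^{k}[j]_i\,2^{k-i}$, where $[j]_i$ is the $i$-th binary digit of $j$. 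That sum is precisely the binary expansion of $j$, so the diagonal entries are $0,1,\dots,n-1$ as claimed.

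Next I would treat the Kronecker-product identity. Each factor $\left(\begin{smallmatrix}1&0\\0&u_i\end{smallmatrix}\right)=\mathrm{Diag}(1,u_i)$ is diagonal, and by the same rule for Kronecker products of diagonal matrices, the product $\left(\begin{smallmatrix}1&0\\0&u_k\end{smallmatrix}\right)\otimes\cdots\otimes\left(\begin{smallmatrix}1&0\\0&u_1\end{smallmatrix}\right)$ is the diagonal matrix whose $j$-th entry is the product of the chosen diagonal entries, namely $\prod_{i=1}^{k}u_i^{[j]_i}$ (taking the first entry $1$ of the $i$-th factor when $[j]_i=0$ and the entry $u_i$ when $[j]_i=1$, keeping careful track of the ordering of the tensor factors so that the exponent of $u_i$ is read off from digit $i$). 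This gives the stated formula $\mathrm{Diag}\big(\prod_{i=1}^{k}u_i^{[j]_i}:0\le j\le n-1\big)$.

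Finally, for the third identity I would combine the two: by the explicit form of $\theta$ recorded just above the lemma, $\theta(s,u_1,\dots,u_k)$ equals the sum of the diagonal matrix $\mathrm{Diag}(0,1,\dots,n-1)$ and $s$ times the diagonal matrix $\mathrm{Diag}\big(\prod_{i}u_i^{[j]_i}\big)$, hence is itself the diagonal matrix with $j$-th entry $j+s\prod_{1\le i\le k}u_i^{[j]_i}$. The characteristic polynomial of a diagonal matrix is the product of $Y$ minus its diagonal entries, so $\chi_{\theta(s,u)}(Y)=\prod_{0\le j\le n-1}\big(Y-(j+s\prod_{1\le i\le k}u_i^{[j]_i})\big)$, which is the asserted value of $\theta'(s,u)$ and lies in $\mathcal{O}'$ by definition. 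I expect the only real bookkeeping obstacle to be keeping the index conventions consistent—correlating the position $j$ in the $n\times n$ block with its binary digits $[j]_i$ under the specific (reversed) ordering $A_k\otimes\cdots\otimes A_1$ used in the definition of $\theta$—but this is a routine matter of writing $j=\sum_i [j]_i 2^{k-i}$ and tracing which tensor slot contributes which power.
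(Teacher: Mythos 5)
Your proof is correct and takes essentially the same route as the paper, which likewise just cites induction for the two Kronecker identities and then reads off the characteristic polynomial of the resulting diagonal matrix. One small bookkeeping correction: from the paper's earlier use $j=\sum_{1\le i\le n}2^{i-1}\epsilon_i$ the convention is that $[j]_i$ is the coefficient of $2^{i-1}$, i.e.\ $j=\sum_i[j]_i\,2^{i-1}$ rather than $j=\sum_i[j]_i\,2^{k-i}$ as you write; under that convention the summand $\mathrm{Diag}(0,2^{k-i})$ occupies the tensor slot weighted by $2^{k-i}$ in $\bigoplus_{i=1}^k$ (so the $j$-th diagonal entry is $j$), while in the reversed product $A_k\otimes\cdots\otimes A_1$ the factor $\mathrm{Diag}(1,u_i)$ occupies the slot weighted by $2^{i-1}$ (so the exponent of $u_i$ is $[j]_i$), making the two displays consistent with each other and with the definition of $\theta'$.
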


\begin{proof}
The first equality holds by induction, observing that
\[
\left( \bigoplus_{i=1}^{k}\left(\begin{matrix} 0 & 0 \\0 & 2
\end{matrix}\right)^{k-i}\right) = \left(\begin{matrix} 0 & 0 \\0 &
2^{k-1} \end{matrix}\right)\oplus \cdots \oplus \left(\begin{matrix}
0 & 0 \\0 & 2^{1} \end{matrix}\right)\oplus \left(\begin{matrix} 0 &
0 \\0 & 1 \end{matrix}\right)
\]
holds. The second equality also follows by induction. As for the
third one, it just follows combining the previous equalities with
the definition of $\theta$ and the definition of the characteristic
polynomial.
\end{proof}

Suppose now that for the computational task determined by $\theta$
and $\chi$ there is given a winning strategy of the approximative
quiz game protocol. Thus we have a framed abstract data type carrier
$\mathcal{O}^*$, framed data structures $\mathcal{M}^*$ and
$\mathcal{N}^*$, an abstraction function $\theta^*$ of the
compatible collection $\mathcal{H}$ associated with a geometrically
robust constructible map $\mu^*:\mathcal{M}^* \to \mathcal{N}^*$ and
a polynomial map $\omega^*:\mathcal{N}^* \to \mathcal{O}^*$, and a
surjective  geometrically robust constructible map
$\sigma:\mathcal{M}\to\mathcal{M}^*$, such that by Lemma
\ref{observation} the identities
$\theta'=\omega^*\circ\mu^*\circ\sigma=\theta^*\circ\sigma$ and
$\mathcal{O}'=\mathcal{O}^*$ hold. This situation is depicted by the
following commutative diagram:

\begin{center}
\begin{tikzpicture}[scale=0.5]
\node[ shape=rectangle] (v1) at (0,3)   {$\mathcal{M}$};
\node[ shape=rectangle] (v2) at (3,3)   {{${\mathcal O}$}};
\node[ shape=rectangle] (v3) at (3,0)   {$\mathcal{O}'$};
\node[ shape=rectangle] (v4) at (0,0)   { {${\mathcal N}'$}};
\node[ shape=rectangle] (v5) at (0,6)   {{${\mathcal N}$}};
\node[ shape=rectangle] (v6) at (9,3)   {$\mathcal{M}^*$};%
\node[ shape=rectangle] (v7) at (9,0)   {{${\mathcal N}^*$}};%
\node[ shape=rectangle] (v8) at (6,0)   {{${\mathcal O}^*$}};%
\node[ shape=rectangle] (v9) at (4.5,0)   {$=$};

\path[color=black,->,font=\scriptsize,>=angle 90]
(v6) edge[->] node[right] {$\mu^*$}  (v7)
(v7) edge[->] node[below] {$\omega^*$}  (v8)
(v6) edge[->] node[left] {$\theta^*$}  (v8)

;

\path[color=black,->,font=\scriptsize,>=angle 90]
(v1) edge[->] node[left]{$\theta'$}  (v3)
(v1) edge[->] node[left] {$\mu'$}  (v4)
(v4) edge[->] node[below] {$\omega'$}  (v3)
;

\path[color=black,->,font=\scriptsize,>=angle 90]
(v1) edge[->] node[above] {$\theta$}  (v2)
(v1) edge[->] node[left] {$\mu$}  (v5)
(v5) edge[->] node[right] {$\omega$}  (v2)

;

\path[color=black,->,font=\scriptsize,>=angle 90]
(v2) edge[->] node[right] {$\chi$}  (v3)
(v1) edge[->, bend left] node[above right] {$\sigma$}  (v6)

;

\end{tikzpicture}
\end{center}

By similar arguments as in the proof of Lemma \ref{lemma size of
theta} we obtain the following statement.
\begin{lemma}
The size of $\theta^*$ is at least $n=2^k$.
\end{lemma}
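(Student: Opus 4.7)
The plan is to mimic closely the proof of Lemma \ref{lemma size of theta}, exploiting the explicit product formula
\[
\theta'(s,u_1,\dots,u_k)=\prod_{0\le j\le n-1}\Big(Y-\big(j+s\prod_{i=1}^{k}u_i^{[j]_i}\big)\Big)
\]
just established in the preceding lemma. First I would expand this polynomial as $\theta'(s,u)=Y^n+B_1Y^{n-1}+\dots+B_n$ with $B_j\in\C[s,u_1,\dots,u_k]$, and write $B_j=c_j+sL_j(u)+(\text{terms of higher degree in }s)$ with $L_1,\dots,L_n\in\C[u_1,\dots,u_k]$.

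The key preliminary step is to show that $L_1,\dots,L_n$ are $\C$--linearly independent. For this I would compute
\[
\frac{\partial\theta'}{\partial s}(0,u)=-\sum_{j=0}^{n-1}p_j(u)\,Q_j(Y),
\]
where $p_j(u):=\prod_{i=1}^{k}u_i^{[j]_i}$ and $Q_j(Y):=\prod_{j'\ne j}(Y-j')$. The $n$ monomials $p_0,\dots,p_{n-1}$ are pairwise distinct (the binary expansions $[j]_i$ are distinct) and therefore $\C$--linearly independent, while the polynomials $Q_0,\dots,Q_{n-1}$ form a Lagrange--type basis of the space of polynomials in $Y$ of degree $<n$, as $Q_j(j')=0$ for $j\ne j'$ and $Q_j(j)\ne 0$. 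Reading off the coefficients of $Y^{n-k}$ we obtain $L_1,\dots,L_n$ as the image of $p_0,\dots,p_{n-1}$ under an invertible change of basis, and hence linearly independent. In particular there exists a nonempty Zariski open subset $\mathcal{U}\subset\A^{n\times k}$ such that the matrix $\mathtt{M}_{L_1,\dots,L_n}:=(L_l(u_j))_{1\le l,j\le n}$ is nonsingular for every $(u_1,\dots,u_n)\in\mathcal{U}$.

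Next I would use Corollary \ref{c:1} to locate a large set on which $\mu^*\circ\sigma$ at $s=0$ is constant. Indeed, $\theta(0,u_1,\dots,u_k)$ equals the fixed diagonal matrix $\mathrm{Diag}(0,1,\dots,n-1)$ (by the first displayed identity of the lemma preceding the statement), whose characteristic polynomial $\theta'(0,u)=\prod_{j=0}^{n-1}(Y-j)$ does not depend on $u$. Hence by Corollary \ref{c:1} the set $\{(\mu^*\circ\sigma)(0,u):u\in\A^k\}$ is finite, so there exists a nonempty Zariski open subset $\Gamma\subset\A^k$ on which $(\mu^*\circ\sigma)(0,u)$ takes a single value $v\in\cN^*$. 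Since $\mathcal{U}\cap\Gamma^n$ is a nonempty Zariski open subset of $\A^{n\times k}$, I can pick $u_1,\dots,u_n\in\Gamma$ with $\mathtt{M}_{L_1,\dots,L_n}$ nonsingular.

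For $1\le l\le n$ define $\varepsilon_l(t):=(\mu^*\circ\sigma)(t,u_l)$ and $\delta_l(t):=\theta'(t,u_l)$. Both maps are geometrically robust constructible on $\A^1$ and hence polynomial by Theorem \ref{theorem geometrically robust functions}$(iii)$, so both are holomorphic; moreover $\varepsilon_l(0)=v$ for every $l$. The winning strategy hypothesis and Lemma \ref{observation} give $\delta_l=\omega^*\circ\varepsilon_l$, so the chain rule yields
\[
\frac{d\delta_l}{dt}(0)=d\omega^*(v)\cdot\frac{d\varepsilon_l}{dt}(0).
\]
But $\frac{d\delta_l}{dt}(0)=\frac{\partial\theta'}{\partial s}(0,u_l)$ has coefficient vector $(L_1(u_l),\dots,L_n(u_l))$, and by nonsingularity of $\mathtt{M}_{L_1,\dots,L_n}$ the vectors $\frac{d\delta_1}{dt}(0),\dots,\frac{d\delta_n}{dt}(0)$ are $\C$--linearly independent. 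It follows that $\frac{d\varepsilon_1}{dt}(0),\dots,\frac{d\varepsilon_n}{dt}(0)$ span a $\C$--linear subspace of dimension $n$, so the ambient space of $\cN^*$, and hence the size of $\theta^*$, is at least $n=2^k$. The main (and only non-routine) obstacle is the linear independence of $L_1,\dots,L_n$; once that is settled via the Lagrange basis observation, the rest is the same chain-rule argument used for Lemmas \ref{lemma order}, \ref{lemma G D}, \ref{lemma theta star size} and \ref{lemma size of theta}.
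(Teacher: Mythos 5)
Your proof is correct and takes essentially the same approach the paper intends: the paper's proof of this lemma consists solely of the sentence ``By similar arguments as in the proof of Lemma~\ref{lemma size of theta} we obtain the following statement,'' and your argument is precisely a careful instantiation of that template (differentiate at $s=0$, use Corollary~\ref{c:1} to freeze the value of $\mu^*\circ\sigma$ along the slice $s=0$, pick base points from a dense open $\mathcal U\cap\Gamma^n$, apply the chain rule via Theorem~\ref{theorem geometrically robust functions}$(iii)$). The one place you diverge from the cited template is welcome: Lemma~\ref{lemma size of theta} delegates the $\C$--linear independence of $L_1,\dots,L_{2^n}$ to \cite[\S4.2]{CaGiHeMaPa03}, whereas you give a short self--contained argument via the Lagrange basis $Q_j(Y)=\prod_{j'\neq j}(Y-j')$ at the nodes $0,1,\dots,n-1$ together with the $n$ distinct monomials $p_j(u)=\prod_i u_i^{[j]_i}$. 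That is a clean and correct way to see the required non--degeneracy, and it makes the proof more transparent than a bare reference.
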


This yields the following result, which may be seen as a linear
algebra avatar of Theorem \ref{elimination-exponential-size:teor}.
\begin{theorem}\label{characteristic-polynomial-exponential-size:teor}
Let $k\in\N$ be a discrete parameter and let $n=2^k$. There exists a
term of size $O(k)$ in $k+1$ variables in the first--order language
of $(\mathfrak{R}, \oplus, \otimes)$ which involves also
$\C$--linear operations in $\mathbf{M}_n(\C)$ and has the following
properties.

Let $\mathcal{M}:=\A^{k+1}$. The term describes a polynomial mapping
from $\mathcal{M}$ onto a constructible subset $\mathcal{O}$ of
$\mathbf{M}_n(\C)$ and represents therefore an abstraction function
$\theta:\mathcal{M}\to\mathcal{O}$. Let $\mathcal{O}'$ be the set of
characteristic polynomials of the elements of $\mathcal{O}$ and
$\chi:\mathcal{O}\to\mathcal{O}'$ the corresponding polynomial map.
Then, by means of $\chi\circ\theta$, the mentioned term determines a
family of univariate monic polynomials of degree $n$ parameterized
by $\mathcal{M}$ and converts $\mathcal{O}'$ into a framed abstract
data type carrier together with a surjective polynomial mapping
$\theta':\mathcal{M}\to\mathcal{O}'$ representing an abstraction
function.

Moreover, any approximative quiz game with winning strategy giving
rise to framed data structures $\mathcal{M}^*$ and $\mathcal{N}^*$,
geometrically robust constructible maps
$\sigma:\mathcal{O}\to\mathcal{M}^*$,
$\mu^*:\mathcal{M}^*\to\mathcal{N}^*$ and
$\theta^*:\mathcal{M}^*\to\mathcal{O}'$, and a polynomial map
$\omega^*:\mathcal{N}^*\to\mathcal{O}'$, such that $\sigma$ is
surjective and the diagram
\begin{center}
\begin{tikzpicture}[scale=0.5]
\node[ shape=rectangle] (v1) at (0,4)   {$\mathcal{M}$};
\node[ shape=rectangle] (v2) at (0,8)   {{${\mathcal O}$}};
\node[ shape=rectangle] (v3) at (4,8)   {$\mathcal{O}'$};
\node[ shape=rectangle] (v4) at (8,4)   { {${\mathcal N}^*$}};
\node[ shape=rectangle] (v5) at (4,4)   {{${\mathcal M}^*$}};

\path[color=black,->,font=\scriptsize,>=angle 90]
(v1) edge[->] node[left] {$\theta$}  (v2)
(v2) edge[->] node[above] {$\chi$}  (v3)
(v1) edge[->] node[below] {$\theta'$}  (v3)
(v2) edge[->] node[above] {$\sigma$}  (v5)
(v4) edge[->] node[right] {$\omega^*$}  (v3)
(v5) edge[->] node[below] {$\mu^*$}  (v4)
(v4) edge[->] node[right] {$\omega^*$}  (v3)
(v5) edge[->] node[right] {$\theta^*$}  (v3)

;

\end{tikzpicture}
\end{center}
commutes, requires that $\mathcal{N}^*$ has size at least $n=2^k$.
\end{theorem}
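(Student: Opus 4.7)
The plan is to reduce the theorem to the immediately preceding lemma stating that the size of $\theta^*$ is at least $n = 2^k$. All other assertions of the theorem — the existence of a term of size $O(k)$ describing $\theta$, the interpretation of $\theta$ and $\theta' = \chi \circ \theta$ as abstraction functions onto framed abstract data type carriers, and the surjectivity of $\theta'$ — are explicit consequences of the construction already carried out in the subsection. What actually requires proof is only the lower bound on the size of $\mathcal{N}^*$.

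To establish this lower bound I would mimic the argument of Lemma \ref{lemma size of theta}, exploiting the explicit identity $\theta'(s, u_1, \dots, u_k) = \prod_{j=0}^{n-1}(Y - (j + s \prod_{i=1}^k u_i^{[j]_i}))$ proved in the preceding lemma. Introducing indeterminates $T, U_1, \dots, U_k$, consider $F := \prod_{j=0}^{n-1}(Y - (j + T \prod_{i=1}^k U_i^{[j]_i})) = Y^n + B_1 Y^{n-1} + \dots + B_n$ with $B_{k'} \in \C[T, U]$, and write $B_{k'} = c_{k'} + T \cdot L_{k'}(U) + (\text{higher-order terms in } T)$ with $c_{k'} \in \C$ and $L_{k'} \in \C[U_1, \dots, U_k]$. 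The algebraic heart of the proof is to establish that $L_1,\dots,L_n$ are $\C$--linearly independent. For this I would compute $\frac{\partial F}{\partial T}(0, U, Y) = -\sum_{j=0}^{n-1} M_j(U)\, P_j(Y)$, where $M_j(U) := \prod_{i=1}^k U_i^{[j]_i}$ and $P_j(Y) := \prod_{\ell \neq j}(Y - \ell)$ are the Lagrange polynomials at $0, 1, \dots, n-1$; since the $P_j$ are $\C$--linearly independent in $\C[Y]$ and the $n$ distinct monomials $M_0, \dots, M_{n-1}$ are $\C$--linearly independent in $\C[U]$, the $n$ coefficients $L_{k'}$ of $-\sum_j M_j P_j$ as a polynomial in $Y$ inherit $\C$--linear independence.

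With this in hand, the rest parallels the proof of Lemma \ref{lemma size of theta}. I would choose a nonempty Zariski open subset $\mathcal{U} \subset \A^{n \times k}$ on which the $(n \times n)$--matrix $(L_{k'}(u_\ell))_{1 \leq k', \ell \leq n}$ is nonsingular. Since $\theta'(0, u) = \prod_{j=0}^{n-1}(Y - j)$ does not depend on $u$, Corollary \ref{c:1} forces the set $\{(\mu^* \circ \sigma)(0, u) : u \in \A^k\}$ to be finite; hence there is a nonempty Zariski open $\Gamma \subset \A^k$ on which $(\mu^* \circ \sigma)(0, \cdot)$ takes a constant value $v$. As $\mathcal{U} \cap \Gamma^n$ is nonempty, pick $(u_1, \dots, u_n) \in \mathcal{U} \cap \Gamma^n$ and define $\varepsilon_\ell(t) := (\mu^* \circ \sigma)(t, u_\ell)$ and $\delta_\ell(t) := \theta'(t, u_\ell)$; by Theorem \ref{theorem geometrically robust functions}$(iii)$ both are polynomial, hence holomorphic, maps of $t$, with $\varepsilon_\ell(0) = v$. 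The winning-strategy identity $\theta' = \omega^* \circ \mu^* \circ \sigma$ yields $\delta_\ell = \omega^* \circ \varepsilon_\ell$, so the chain rule gives $\frac{d\delta_\ell}{dt}(0) = d\omega^*(v) \cdot \frac{d\varepsilon_\ell}{dt}(0)$. Because the coordinate vector of $\frac{d\delta_\ell}{dt}(0)$ is $(L_1(u_\ell), \dots, L_n(u_\ell))$ and the matrix $(L_{k'}(u_\ell))$ is nonsingular, the vectors $\frac{d\varepsilon_\ell}{dt}(0)$ must span a $\C$--linear space of dimension $n$, which forces the ambient space of $\mathcal{N}^*$ to have dimension at least $n = 2^k$.

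The main obstacle is the $\C$--linear independence of $L_1, \dots, L_n$; the Lagrange-basis computation above is the cleanest route, although one can alternatively adapt the computation in \cite[\S 4.2]{CaGiHeMaPa03}. Everything else — Corollary \ref{c:1}, Theorem \ref{theorem geometrically robust functions}$(iii)$, and the chain rule — is used exactly as in Lemma \ref{lemma size of theta}, the only bookkeeping change being the reduction from $n$ weight variables there to $k = \log_2 n$ here, so the matrix is of size $n \times n$ rather than $2^n \times 2^n$.
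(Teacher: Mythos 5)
Your proposal is correct and takes the same route as the paper, which proves the lower bound by stating only that it follows ``by similar arguments as in the proof of Lemma~\ref{lemma size of theta}''. Your Lagrange-basis observation ($\partial F/\partial T\,(0,U,Y) = -\sum_j M_j(U)\,\prod_{\ell\neq j}(Y-\ell)$, so the $L_{k'}$ are the image of the linearly independent square-free monomials $M_0,\dots,M_{n-1}$ under an invertible change of basis) is a cleaner and more self-contained way to obtain the $\C$-linear independence of $L_1,\dots,L_n$ than the reference the paper gives to \cite[\S 4.2]{CaGiHeMaPa03}, but the remaining structure (Corollary~\ref{c:1}, Theorem~\ref{theorem geometrically robust functions}$(iii)$, the chain rule at $t=0$) matches the paper exactly.
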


%
%
\appendix
\section{Further facts on geometrically robust constructible maps}
\label{section: futher facts on geom rob maps}
In the proof of Proposition \ref{p:1} we make use of the following
statement, which is also a key ingredient of the proof of Theorem
\ref{theorem geometrically robust functions}. It is a slight
extension of \cite[Lemma 2]{CaGiHeMaPa03}.
\begin{lemma}
\label{lemma module} Let $\mathcal{M}$ and $\mathcal{N}$ be
constructible subsets of suitable affine spaces over $\C$ and let
$\Phi:\mathcal{M}\rightarrow\mathcal{N}$ be a surjective polynomial
map (thus $\Phi$ induces a structure of
$\C[\ol{\mathcal{N}}]$--module on $\C[\ol{\mathcal{M}}]$).

Suppose that $\Phi$ satisfies the following condition: each sequence
$(x_k)_{k\in\N}$ of elements of $\mathcal{M}$, such that
$(\Phi(x_k))_{k\in\N}$ converges in the Euclidean topology of
$\mathcal{N}$ to a point of $\mathcal{N}$, is bounded. Then for any
point $y\in\mathcal{N}$ with maximal vanishing ideal
$\mathfrak{m}_y$ in $\C[\ol{\mathcal{N}}]$, the
$\C[\ol{\mathcal{N}}]_{\mathfrak{m}_y}$--module
$\C[\ol{\mathcal{M}}]_{\mathfrak{m}_y}$ is finite.
\end{lemma}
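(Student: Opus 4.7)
My plan is to establish, for a given $y\in\mathcal{N}$, that each coordinate function $\lambda$ of $\overline{\mathcal{M}}$ satisfies an integral dependence relation
\[
a_0\lambda^s+a_1\lambda^{s-1}+\cdots+a_s=0 \quad\text{in }\C[\overline{\mathcal{M}}],
\]
with $a_0,\ldots,a_s\in\C[\overline{\mathcal{N}}]$ and $a_0\notin\mathfrak{m}_y$. Such relations force $\C[\overline{\mathcal{M}}]_{\mathfrak{m}_y}$ to be integral over $\C[\overline{\mathcal{N}}]_{\mathfrak{m}_y}$; since it is generated as a $\C[\overline{\mathcal{N}}]_{\mathfrak{m}_y}$-algebra by the finitely many coordinate functions of $\overline{\mathcal{M}}$, it is then finite as a module. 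As a first reduction I would pass to the irreducible case: writing $\overline{\mathcal{M}}=V_1\cup\cdots\cup V_r$ for the irreducible decomposition, the localization $\C[\overline{\mathcal{M}}]_{\mathfrak{m}_y}$ splits as a finite product of the localizations of the $\C[V_i]$ at the pullback of $\mathfrak{m}_y$, and the sequential boundedness hypothesis restricts to each piece $V_i\cap\mathcal{M}$ lying over a neighborhood of $y$. So I may assume $\overline{\mathcal{M}}$ irreducible with $\mathcal{M}$ Zariski dense in it.

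The decisive step is geometric: to convert the sequential boundedness hypothesis into algebro-geometric properness at $y$. I would embed the ambient affine space of $\overline{\mathcal{M}}$ into a projective space $\mathbb{P}^N$, form the Zariski closure $\widetilde{\mathcal{M}}\subset\mathbb{P}^N\times\overline{\mathcal{N}}$ of the graph of the polynomial extension $\overline{\Phi}:\overline{\mathcal{M}}\to\overline{\mathcal{N}}$, and consider the proper second projection $\pi:\widetilde{\mathcal{M}}\to\overline{\mathcal{N}}$. The claim is that the fibre $\pi^{-1}(y)$ lies entirely in the affine chart $\A^N\times\overline{\mathcal{N}}$. Suppose not; then a point $(p_\infty,y)\in\widetilde{\mathcal{M}}$ with $p_\infty$ at infinity would, by the curve selection lemma, be the limit of a real analytic arc $\gamma:(0,\epsilon)\to\widetilde{\mathcal{M}}$ whose first component in $\A^N$ tends to infinity while the second tends to $y$. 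Since the irreducible constructible set $\mathcal{M}$ contains a Zariski-open Euclidean-dense subset of $\overline{\mathcal{M}}$, one can approximate the first component of $\gamma$ by points $x_k\in\mathcal{M}$ with $\|x_k\|\to\infty$ and $\Phi(x_k)\to y$, contradicting the hypothesis.

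Once $\pi^{-1}(y)$ is confined to the affine chart, the restriction of $\pi$ over a suitable Zariski-open neighborhood of $y$ becomes a proper morphism between affine varieties, hence finite; localizing at $\mathfrak{m}_y$ and reading off integrality of each coordinate function of $\overline{\mathcal{M}}$ produces the required relations, with leading coefficients $a_0$ non-vanishing at $y$. The main obstacle I foresee is the approximation step: the constructible set $\mathcal{M}$ need not contain any Euclidean neighborhood of the arc $\gamma$, so one must combine the curve selection lemma with Chevalley's theorem and the structure of $\mathcal{M}$ to ensure that the points $x_k$ can actually be chosen inside $\mathcal{M}$ while controlling both $\|x_k\|\to\infty$ and $\Phi(x_k)\to y$ simultaneously.
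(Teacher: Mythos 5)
Your proposal is correct in outline and takes a genuinely different route from the paper's. The paper first shows the function--field extension $\C(\ol{\mathcal{N}})\hookrightarrow\C(\ol{\mathcal{M}})$ is finite (via a dimension/unboundedness argument using \cite[Lemma 1]{CaGiHeMaPa03}), then introduces the integral closure $\mathcal{A}$ of $\C[\ol{\mathcal{N}}]$ in $\C[\ol{\mathcal{M}}]$, which gives an intermediate affine variety $V$ with a \emph{finite} projection $\Psi:V\to\ol{\mathcal{N}}$, and finally uses two sequence arguments (nonemptiness and finiteness of $\widetilde\Phi^{-1}(z)$) together with Zariski's Main Theorem to conclude $\C[V]_{\mathfrak{m}_y}=\C[\ol{\mathcal{M}}]_{\mathfrak{m}_y}$. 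You instead compactify the graph of $\ol\Phi$ inside $\mathbb{P}^N\times\ol{\mathcal{N}}$, use the sequential boundedness hypothesis to show the fibre of the proper projection $\pi$ over $y$ avoids the hyperplane at infinity, and then extract a Zariski--open neighborhood $U\ni y$ (which you should shrink to a basic open $D(f)$ so that $\pi^{-1}(U)$ is honestly affine) over which $\pi$ is proper and affine, hence finite; localizing at $\mathfrak{m}_y$ finishes the argument. Your route replaces ZMT and the finiteness-of-normalization theorem by the elementary fact that a proper affine morphism is finite, at the cost of a projective compactification; both use essentially the same density-plus-continuity trick to produce sequences in $\mathcal{M}$ rather than merely in $\ol{\mathcal{M}}$. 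Two simplifications to your write-up: the reduction to the irreducible case is unnecessary (the compactified graph and the finiteness argument work for reducible $\ol{\mathcal{M}}$ verbatim), and the curve selection lemma is overkill --- since $\widetilde{\mathcal{M}}$ is the Euclidean closure of the graph, any point at infinity in $\pi^{-1}(y)$ is already the Euclidean limit of a \emph{sequence} $(x_k,\ol\Phi(x_k))$ with $x_k\in\ol{\mathcal{M}}$, $\|x_k\|\to\infty$ and $\ol\Phi(x_k)\to y$; the obstacle you flag is then resolved exactly as in the paper's Claim 2, by density of $\mathcal{M}$ in $\ol{\mathcal{M}}$ and continuity of the polynomial map $\ol\Phi$, choosing $x_k'\in\mathcal{M}$ with $\|x_k'-x_k\|<1/k$ and $\|\Phi(x_k')-\ol\Phi(x_k)\|<1/k$ simultaneously.
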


Since Lemma \ref{lemma module} is of its own interest, we are going
to give a simple geometric proof of it.

\begin{proof}
The surjective polynomial map
$\Phi:\mathcal{M}\rightarrow\mathcal{N}$ induces embeddings of
$\C[\ol{\mathcal{N}}]$ into $\C[\ol{\mathcal{M}}]$ and, if $\cM$ is
irreducible, of $\C(\ol{\mathcal{N}})$ into $\C(\ol{\mathcal{M}})$.
In this case, we claim that $\C(\ol{\cM})$ is a finite extension of
$\C(\ol{\cN})$. Indeed, let $r\ge 0$ be the transcendence degree of
$\C(\ol{\cM})$ over $\C(\ol{\cN})$. Then there exists a nonempty
Zariski open subset $U$ of $\ol{\cN}$ such that $\dim\Phi^{-1}(y)=r$
for any $y\in U$. Let $y\in U\cap\cN$. If $r>0$, then $\Phi^{-1}(y)$
is unbounded by \cite[Lemma 1]{CaGiHeMaPa03}. In particular, there
exists an unbounded sequence $(x_k)_{k\in\N}$ in $\cM$ with
$\Phi(x_k)=y$, contradicting the assumptions of the lemma. We
conclude that $r=0$, and hence $\C(\ol{\cM})$ is a finite extension
of $\C(\ol{\cN})$.

Let $\mathcal{A}$ be the integral closure of $\C[\ol{\mathcal{N}}]$
in $\C[\ol{\mathcal{M}}]$. If $\mathcal{M}$ is irreducible, as
$\C(\ol{\cM})$ is a finite extension of $\C(\ol{\cN})$, by, e.g.,
\cite[Corollary 13.13]{Eisenbud95}, we deduce that $\mathcal{A}$ is
a finite $\C[\ol{\mathcal{N}}]$--module. Since
$\C[\ol{\mathcal{N}}]$ is a reduced noetherian ring, one concludes
now easily that the same is true in the general case, when
$\mathcal{M}$ is not necessarily irreducible. Therefore there exists
an affine variety $V$ over $\C$ with $\mathcal{A}\cong \C[V]$. The
embeddings of $\mathcal{A}$ in $\C[\ol{\mathcal{M}}]$ and of
$\C[\ol{\mathcal{N}}]$ in $\mathcal{A}$  induce a commutative
diagram of morphisms of affine varieties
\[
\xymatrix{
     &  V \ar[dd]^{\Psi} \\
\ol{\mathcal{M}} \ar[ur]^{\widetilde{\Phi}} \ar[dr]_{\Phi}   &  \\
     & \ol{\mathcal{N}}}
\]
where the image of $\widetilde{\Phi}$, and hence
$\widetilde{\Phi}(\mathcal{M})$, are Zariski dense in $V$ and $\Psi$
is finite.

Consider now an arbitrary point $y\in\mathcal{N}$ with maximal
vanishing ideal $\mathfrak{m}_y$ in $\C[\ol{\mathcal{N}}]$ and let
$z\in V$ an arbitrary point of $V$ with $\Psi(z)=y$. Observe that
such a point $z$ exists because $\Psi$ is a finite morphism.
Moreover, let $\mathfrak{M}_z$ be the maximal vanishing ideal of $z$
in $\C[V]$.
\begin{claim}
\label{claim *} $\widetilde{\Phi}^{-1}(z)$ is nonempty.
\end{claim}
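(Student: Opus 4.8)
The plan is to reduce the claim to a compactness argument fueled by the boundedness hypothesis of Lemma \ref{lemma module}, using that the map $\widetilde{\Phi}$ has Zariski dense image in $V$ and that $\Psi$, being finite, is in particular a morphism and hence continuous for the Euclidean topology. The whole point is that, for complex constructible sets, density, closedness and the relevant continuity may all be read off in the Euclidean topology, and this is precisely what turns the (a priori purely metric) boundedness assumption into a usable tool.

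First I would note that $\mathcal{M}$ is a constructible subset of $\overline{\mathcal{M}}$, so by Chevalley's theorem $\widetilde{\Phi}(\mathcal{M})$ is a constructible subset of $V$. By construction it is Zariski dense in $V$, and since the Zariski and Euclidean closures of a constructible subset of a complex affine variety coincide, $\widetilde{\Phi}(\mathcal{M})$ is also Euclidean dense in $V$. As $z\in V$, there is therefore a sequence $(w_k)_{k\in\N}$ of points of $\mathcal{M}$ such that $\big(\widetilde{\Phi}(w_k)\big)_{k\in\N}$ converges to $z$ in the Euclidean topology of $V$.

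Next I would transport this convergence down via $\Psi$. Since $\Psi\circ\widetilde{\Phi}=\Phi$ on $\overline{\mathcal{M}}$ and $\Psi$ is Euclidean continuous, the sequence $\big(\Phi(w_k)\big)_{k\in\N}=\big(\Psi(\widetilde{\Phi}(w_k))\big)_{k\in\N}$ converges to $\Psi(z)=y$. Because $w_k\in\mathcal{M}$ we have $\Phi(w_k)\in\mathcal{N}$, and $y\in\mathcal{N}$ by hypothesis, so this is a sequence in $\mathcal{N}$ converging in $\mathcal{N}$. The hypothesis of Lemma \ref{lemma module} then forces $(w_k)_{k\in\N}$ to be bounded. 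Consequently it has a subsequence converging, in the Euclidean topology of the ambient affine space, to some point $w$; and since $\overline{\mathcal{M}}$ is Euclidean closed, $w\in\overline{\mathcal{M}}$, where $\widetilde{\Phi}$ is everywhere defined. By continuity of $\widetilde{\Phi}$ the corresponding subsequence of $\big(\widetilde{\Phi}(w_k)\big)_{k\in\N}$ converges to $\widetilde{\Phi}(w)$; but it also converges to $z$, so $\widetilde{\Phi}(w)=z$ and hence $\widetilde{\Phi}^{-1}(z)\neq\emptyset$, which proves the claim. No step presents a genuine obstacle once the topological bookkeeping is set up; the only delicate point is keeping careful track of which assertions hold in the Euclidean and which in the Zariski topology, so that the boundedness assumption applies exactly where needed.
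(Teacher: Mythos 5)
Your proof is correct and follows essentially the same route as the paper's: approximate $z$ by $\widetilde{\Phi}(w_k)$ using density of the constructible image, push down through $\Psi$ to conclude $\Phi(w_k)\to y$ in $\mathcal{N}$, invoke the boundedness hypothesis to extract a convergent subsequence with limit $w\in\overline{\mathcal{M}}$, and use continuity of $\widetilde{\Phi}$ to conclude $\widetilde{\Phi}(w)=z$. The only difference is that you spell out the Chevalley and ``Zariski closure equals Euclidean closure'' justifications that the paper leaves implicit.
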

\begin{proof}
Since $\widetilde{\Phi}(\mathcal{M})$ is Zariski dense in $V$, it is
also dense in $V$ with respect to the Euclidean topology of $V$.
Therefore there exists a sequence $(x_k)_{k\in\N}$ of points of
$\mathcal{M}$ such that $(\widetilde{\Phi}(x_k))_{k\in\N}$ converges
to $z$. Since $\Psi$ is continuous with respect to the Euclidean
topologies of $V$ and $\ol{\mathcal{N}}$, the sequence
$(\Phi(x_k))_{k\in\N}= ((\Psi\circ\widetilde{\Phi})(x_k))_{k\in\N}$
converges to $y=\Psi(z)$.

By assumption $\Phi$ satisfies the condition of Lemma \ref{lemma
module}. Therefore the sequence $(x_k)_{k\in\N}$ is bounded. Without
loss of generality we may assume that $(x_k)_{k\in\N}$ converges
with respect to the Euclidean topology of $\ol{\mathcal{M}}$ to a
point $x\in\ol{\mathcal{M}}$. Then the continuity of
$\widetilde{\Phi}$ implies $\widetilde{\Phi}(x)= \lim_{k \to \infty}
\widetilde{\Phi}(x_k) = z$. Hence $\widetilde{\Phi}^{-1}(z)$ is
nonempty.
\end{proof}

\begin{claim}
\label{claim **} $\widetilde{\Phi}^{-1}(z)$ is finite.
\end{claim}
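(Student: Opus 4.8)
The plan is to prove finiteness of $\widetilde{\Phi}^{-1}(z)$ by showing that it is \emph{bounded} in the Euclidean topology of the affine ambient space of $\overline{\mathcal{M}}$. This suffices: $\widetilde{\Phi}^{-1}(z)$ is Zariski closed in $\overline{\mathcal{M}}$, being the preimage of the closed point $z$ under the morphism $\widetilde{\Phi}$, hence it is a closed algebraic subset of the ambient affine space; by \cite[Lemma 1]{CaGiHeMaPa03} a positive--dimensional closed algebraic set is unbounded, so a bounded such set must be zero--dimensional, i.e.\ finite. (Equivalently, being Zariski closed and bounded it is Euclidean compact, and a compact complex affine variety is finite.) By Claim \ref{claim *} the set $\widetilde{\Phi}^{-1}(z)$ is nonempty, and this does not affect the argument.

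To prove boundedness I would argue by contradiction, assuming that $(x_k)_{k\in\N}$ is a sequence of points of $\widetilde{\Phi}^{-1}(z)$ with $\|x_k\|\to\infty$. The main obstacle is that the $x_k$ lie in $\overline{\mathcal{M}}$ but a priori not in $\mathcal{M}$, so the boundedness hypothesis of Lemma \ref{lemma module}, which only concerns sequences in $\mathcal{M}$, cannot be applied to $(x_k)_{k\in\N}$ directly. I would overcome this by a perturbation: since $\mathcal{M}$ is Zariski dense in $\overline{\mathcal{M}}$ and, being constructible, therefore Euclidean dense, and since $\widetilde{\Phi}$ is continuous with respect to the Euclidean topologies, for each $k$ one may choose a point $x_k'\in\mathcal{M}$ with $\|x_k'-x_k\|<1/k$ and $\|\widetilde{\Phi}(x_k')-z\|<1/k$ (the first condition by density, the second by continuity of $\widetilde{\Phi}$ at $x_k$, using $\widetilde{\Phi}(x_k)=z$). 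Then $\|x_k'\|\to\infty$, whereas $\widetilde{\Phi}(x_k')\to z$ and hence, by continuity of the finite morphism $\Psi$, $\Phi(x_k')=(\Psi\circ\widetilde{\Phi})(x_k')\to\Psi(z)=y$.

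Since $y\in\mathcal{N}$, the sequence $(\Phi(x_k'))_{k\in\N}$ converges in $\mathcal{N}$; the hypothesis of Lemma \ref{lemma module} then forces the sequence $(x_k')_{k\in\N}$ of points of $\mathcal{M}$ to be bounded, which contradicts $\|x_k'\|\to\infty$. Hence $\widetilde{\Phi}^{-1}(z)$ is bounded, and therefore finite. I expect the perturbation step to be the only delicate point; everything else is of the same nature as the reasoning already used in Claim \ref{claim *} and in the proof of Lemma \ref{lemma module}.
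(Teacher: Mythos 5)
Your proof is correct and takes essentially the same route as the paper's: argue by contradiction, extract an unbounded sequence in $\widetilde{\Phi}^{-1}(z)$ (via \cite[Lemma 1]{CaGiHeMaPa03}), perturb it into $\mathcal{M}$ using Euclidean density of $\mathcal{M}$ in $\ol{\mathcal{M}}$ and the continuity of $\widetilde{\Phi}$ and $\Psi$, and then invoke the boundedness hypothesis of Lemma \ref{lemma module} to derive a contradiction. The only cosmetic difference is that you phrase the conclusion as ``bounded and Zariski closed $\Rightarrow$ finite'' and build the perturbed sequence $(x_k')$ directly, whereas the paper starts from ``infinite $\Rightarrow$ contains an unbounded sequence'' and uses a double-indexed sequence with a diagonal selection to arrange the same estimates.
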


\begin{proof}
Suppose on the contrary that $\widetilde{\Phi}^{-1}(z)$ is infinite.
By \cite[Lemma 1]{CaGiHeMaPa03} there exists an unbounded sequence
$(v_k)_{k\in\N}$ in $\widetilde{\Phi}^{-1}(z)$. Since $\mathcal{M}$
is dense in $\ol{\mathcal{M}}$ in the Euclidean topology, there
exists for each $k\in\N$ a sequence $(x_{k_i})_{i\in\N}$ of points
of $\mathcal{M}$ which converges to $v_k$ in the Euclidean topology.
From the continuity of $\widetilde{\Phi}$ and the fact that
$v_k\in\widetilde{\Phi}^{-1}(z)$, we deduce that the sequence
$(\widetilde{\Phi}(x_{k_i}))_{i\in\N}$ converges to $z$ in the
Euclidean topology. Without loss of generality we may assume $ \|
v_k - x_{k_i}\| < \frac{1}{i}$ and $ \| z -
\widetilde{\Phi}(x_{k_i}) \| < \frac{1}{i}$ for any $k,i\in\N$,
where $\| \cdot \|$ denotes the Euclidean norm of $\ol{\mathcal{M}}$
and $V$, respectively. Therefore we have $ \| v_k - x_{k_k}\| <
\frac{1}{k}$ for any $k\in\N$, and
$(\widetilde{\Phi}(x_{k_k}))_{k\in\N}$ converges to $z$ in the
Euclidean topology of $V$. This implies that the sequence
$(\Phi(x_{k_k}))_{k\in\N}=(\Psi\circ\widetilde{\Phi}(x_{k_k}))_{k\in\N}$
converges to $y=\phi(z)$ in the Euclidean topology of $\mathcal{N}$.
Since $\Phi$ satisfies the condition of Lemma \ref{lemma module} we
conclude that the sequence $(x_{k_k})_{k\in\N}$ is bounded. From the
inequality $\| v_k - x_{k_k}\| < \frac{1}{k}$ for every $k\in\N$ we
infer now that the sequence $(v_k)_{k\in\N}$ is also bounded,
contrary to our assumption that $(v_k)_{k\in\N}$ is unbounded.
\end{proof}

From Claims \ref{claim *} and \ref{claim **} we deduce that the
fibre $\widetilde{\Phi}^{-1}(z)$ is a zero--dimensional affine
variety and that there exists a prime ideal $\mathfrak{p}$ of
$\C[\ol{\mathcal{M}}]$ with $\mathfrak{p} \cap
\C[V]=\mathfrak{M}_z$, which is maximal and minimal under this
condition. Considering $\C[\ol{\mathcal{M}}]$ as a $\C[V]$--module
and taking into account that $\C[V]$ is integrally closed in
$\C[\ol{\mathcal{M}}]$ we may now deduce from Zariski's Main Theorem
(see, e.g., \cite[IV.2]{Iversen73}) that
$\C[V]_{\mathfrak{M}_z}=\C[\ol{\mathcal{M}}]_{\mathfrak{M}_z}$
holds. Since $z$ was an arbitrary point of $V$ with $\Psi(z)=y$ we
conclude
$\C[V]_{\mathfrak{m}_y}=\C[\ol{\mathcal{M}}]_{\mathfrak{m}_y}$.
Finally $\C[\ol{\mathcal{M}}]_{\mathfrak{m}_y}$ is a finite
$\C[\ol{\mathcal{N}}]_{\mathfrak{m}_y}$--module, because $\C[V]$ is
finite over $\C[\ol{\mathcal{N}}]$.
\end{proof}

We finally state the following result, which is used in Appendix
\ref{sec: approximative representations} below.
\begin{lemma}
\label{lemma Phi} Let $K \subset \mathcal{M}$ and $\mathcal{N}$
constructible subsets of suitable affine spaces over $\C$ and let
$\Phi:\mathcal{M}\rightarrow \mathcal{N}$ be a geometrically robust
constructible map. Then the following assertions hold.
\begin{itemize}
    \item[$(i)$] $K$ is closed in the Zariski topology of $\mathcal{M}$
    if and only if it is closed in the Euclidean topology of $\mathcal{M}$.
    \item[$(ii)$] $\Phi$ is continuous with respect to the Zariski
    topologies of $\mathcal{M}$ and $\mathcal{N}$.
    \item[$(iii)$] If $\mathcal{M}$ is irreducible, then the
    constructible set $\Phi(\mathcal{M})$ is also irreducible.
\end{itemize}
\end{lemma}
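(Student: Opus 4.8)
The plan is to deduce all three assertions from two facts already available in the excerpt: the coincidence of the Zariski and Euclidean closures of a constructible subset of $\A^n$ (Section \ref{subsec: geom robust const maps}), and the Euclidean continuity of geometrically robust constructible maps (Theorem \ref{theo continuos}); Chevalley's theorem on images and preimages of constructible sets will do the remaining bookkeeping. For part $(i)$, one implication is immediate, since the Zariski topology of $\mathcal{M}$ is coarser than its Euclidean topology. For the converse I would argue as follows. The closure of $K$ inside the subspace $\mathcal{M}$, computed in either topology, equals the corresponding closure of $K$ in the ambient affine space intersected with $\mathcal{M}$. Since $K$ is constructible, its Zariski and Euclidean closures in the ambient space coincide; intersecting with $\mathcal{M}$ shows that the Zariski and Euclidean closures of $K$ in $\mathcal{M}$ agree. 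Hence if $K$ coincides with its Euclidean closure in $\mathcal{M}$, it coincides with its Zariski closure in $\mathcal{M}$, i.e. $K$ is Zariski closed in $\mathcal{M}$.

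For part $(ii)$, let $C$ be a Zariski closed subset of $\mathcal{N}$, say $C = Z\cap\mathcal{N}$ with $Z$ Zariski closed in the ambient space. Then $C$ is Euclidean closed in $\mathcal{N}$, and since $\Phi$ is Euclidean continuous by Theorem \ref{theo continuos}, the preimage $\Phi^{-1}(C)$ is Euclidean closed in $\mathcal{M}$. On the other hand $\Phi$ is constructible, so its graph is constructible, and $\Phi^{-1}(C)$ is the projection onto the first factor of the intersection of $\mathrm{graph}(\Phi)$ with $\mathcal{M}\times C$; by Chevalley's theorem (equivalently, by quantifier elimination over $\C$) this is a constructible subset of the ambient space of $\mathcal{M}$, contained in $\mathcal{M}$. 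A constructible subset of $\mathcal{M}$ which is Euclidean closed is Zariski closed by part $(i)$; therefore $\Phi^{-1}(C)$ is Zariski closed in $\mathcal{M}$, proving $(ii)$.

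For part $(iii)$, first note that $\Phi(\mathcal{M})$ is constructible by Chevalley's theorem, and that $\Phi$, regarded as a map $\mathcal{M}\to\Phi(\mathcal{M})$, is still geometrically robust and constructible, since neither its graph nor the conditions of Definition \ref{geom robust def} are affected by shrinking the range. Suppose $\Phi(\mathcal{M}) = A\cup B$ with $A$ and $B$ closed in the Zariski topology of $\Phi(\mathcal{M})$. Then $\mathcal{M} = \Phi^{-1}(A)\cup\Phi^{-1}(B)$, and by part $(ii)$ both $\Phi^{-1}(A)$ and $\Phi^{-1}(B)$ are Zariski closed in $\mathcal{M}$. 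Since $\mathcal{M}$ is irreducible, one of them, say $\Phi^{-1}(A)$, must equal $\mathcal{M}$; then $\Phi(\mathcal{M}) = \Phi(\mathcal{M})\cap A = A$, so the decomposition is trivial and $\Phi(\mathcal{M})$ is irreducible. I expect no real obstacle here: the only genuinely substantial input is Theorem \ref{theo continuos}, used in $(ii)$, and the one point deserving care is keeping the ambient and subspace topologies straight in the proof of $(i)$ so that part $(i)$ applies verbatim to $K := \Phi^{-1}(C)$ in part $(ii)$; everything else is formal.
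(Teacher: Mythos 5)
Your proof is correct and follows the same approach as the paper: the nontrivial direction of $(i)$ uses the coincidence of Zariski and Euclidean closures of the constructible set $K$ in the ambient space, then $(ii)$ follows from $(i)$ together with the Euclidean continuity given by Theorem \ref{theo continuos} and the constructibility of preimages, and $(iii)$ follows from $(ii)$. You have merely spelled out the Chevalley/quantifier-elimination steps that the paper leaves implicit in its one-line derivations of $(ii)$ and $(iii)$.
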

\begin{proof}
$(iii)$ is an immediate consequence of $(ii)$ and $(ii)$ follows
from $(i)$ and Theorem \ref{theo continuos}. The only if part of
$(i)$ is trivial. Let us show the if part.

Suppose that the constructible set $K$ is closed in the Euclidean
topology of $\mathcal{M}$. Then the Euclidean closure $\ol{K}$ of
$K$ satisfies the condition $\ol{K} \cap \mathcal{M} = K$. Since
$\ol{K}$ is closed with respect to the Zariski topology we see that
$K$ is also Zariski closed.
\end{proof}
%
%
\section{Approximative representations and computational models}
\label{sec: approximative representations}
Complexity models dealing with objects of approximative nature have
been used in computer algebra for several purposes. For example, the
concept of border rank has been one of the keys to the fastest known
matrix multiplication algorithms (see, e.g., \cite[Chapter
15]{Burgisser97}). The concepts of approximative complexity has also
been applied to arbitrary polynomials and rational functions (see,
e.g., \cite{Ald84}, \cite{Griesser86} or \cite{Lic90}).

In this appendix we show that the approximative quiz game protocol
of Section \ref{subsec: approximative game} models the standard
concept of approximative complexity.
%
For this purpose, we introduce a representation of polynomials by
means of approximative information which is directly inspired in
that of the references above, namely by a certain meromorphic map
germ. Then we prove that a polynomial can be represented by an
approximative parameter instance in this sense if and only if it can
be represented in a model closed to that of Section \ref{subsec:
approximative game}, namely by a (not necessarily convergent)
sequence of parameter instances.

Let $\mathcal{O}$ be a framed abstract data type carrier,
$\mathcal{M}$ and $\mathcal{N}$ framed data structures, where
$\mathcal{M}$ is irreducible,
$\mathcal{\theta}:\mathcal{M}\rightarrow\mathcal{O}$ a surjective
geometrically robust constructible map,
$\mu:\mathcal{M}\rightarrow\mathcal{N}$ a geometrically robust
constructible map and $\omega:\mathcal{N}\rightarrow\mathcal{O}$ a
polynomial map such that $\theta=\omega\circ\mu$ holds. Thus
$\theta$ is an abstraction function associated with $\mu$ and
$\omega$. Suppose that the elements of $\mathcal{O}$ are polynomials
of $\C[X_1,\dots,X_n]$, where $X_1\klk X_n$ are indeterminates.

Let $U_1,\dots,U_r$ be new indeterminates, where $r$ is the size of
$\mathcal{M}$, and let $U:=(U_1,\dots,U_r)$ and
$X:=(X_1,\dots,X_n)$. Let $\mathfrak{a}$ be the vanishing ideal of
$\ol{\mathcal{M}}$ in $\C[U]$ and let us fix a polynomial
$P\in\C[U]$ such that $\ol{\mathcal{M}}_{P}:=
\{u\in\ol{\mathcal{M}}:P(u)\neq 0 \}$ is a Zariski open and dense
subset of $\mathcal{M}$, and such that $\theta$ as a rational
function is everywhere defined on $\overline{\mathcal{M}}_P$ (see
Remark \ref{remark piecewise rational}). Let $\epsilon$ be a new
indeterminate.

\begin{definition}
An approximative parameter instance for $\theta$ is a vector
$u(\epsilon)= (u_1(\epsilon),\dots,u_r(\epsilon)) \in
\C{(\!(\epsilon)\!)}^r$ which constitutes a meromorphic map germ at
the origin such that any polynomial of $\mathfrak{a}$ vanishes at
$u(\epsilon)$ and $P(u(\epsilon))\neq 0$ holds.
\end{definition}

Let $u(\epsilon)$ be an approximative parameter instance for
$\theta$. Then there exists an open disc $\Delta$ around $0$ such
that for any complex number $c\in \Delta\setminus \{ 0 \}$ the germ
$u(\epsilon)$ is holomorphic at $c$ and such that $P(u(c))\neq 0$
holds. This implies that any polynomial of $\mathfrak{a}$ vanishes
at $u(c)$. In particular, $u(c)$ belongs to $\mathcal{M}$ for any
$c\in\Delta\setminus\{0\}$.

For technical reasons we need the following result.
\begin{lemma}
\label{lemma open disc} Let $u(\epsilon)$ be an approximative
parameter instance for $\theta$. Then there exists an open disc
$\Delta$ of $\C$ around the origin and a germ $\psi$ of meromorphic
functions at the origin such that $u(\epsilon)$ and $\psi$ are
holomorphic on $\Delta \setminus \{ 0 \}$ and such that any complex
number $c\in\Delta \setminus \{ 0 \}$ satisfies the conditions
$P(u(c))\neq 0$ and $\psi(c)=\mu(u(c))$.
\end{lemma}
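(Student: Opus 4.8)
The plan is to read the whole situation along the smallest algebraic variety passing through the germ $u(\epsilon)$, and there to exploit that a constructible map is piecewise rational (Remark~\ref{remark piecewise rational}). First I would invoke the discussion immediately preceding the lemma to fix an open disc $\Delta_0$ about $0$ on which $u(\epsilon)$ is holomorphic away from $0$ and with $P(u(c))\neq 0$, hence $u(c)\in\ol{\mathcal M}_P\subseteq\mathcal M$, for every $c\in\Delta_0\setminus\{0\}$. Let $\mathfrak p\subseteq\C[U]$ be the kernel of the evaluation homomorphism $G\mapsto G(u(\epsilon))$ into the field of germs of meromorphic functions at the origin; thus $\mathfrak p$ is a prime ideal with $\mathfrak a\subseteq\mathfrak p$ and $P\notin\mathfrak p$. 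Set $Z:=V(\mathfrak p)\subseteq\ol{\mathcal M}$, an irreducible affine variety, and $Z_P:=Z\setminus V(P)$, a nonempty Zariski open and dense subset of $Z$ contained in $\ol{\mathcal M}_P\subseteq\mathcal M$. Since $\mathfrak p$ is finitely generated, after shrinking $\Delta_0$ I may assume $u(c)\in Z_P$ for all $c\in\Delta_0\setminus\{0\}$.

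Next I would apply Remark~\ref{remark piecewise rational} to the total constructible map $\mu|_{Z_P}\colon Z_P\to\mathcal N$ on the irreducible constructible set $Z_P$: there is a nonempty Zariski open subset $\mathcal U\subseteq Z_P$ on which the components $\mu_1,\dots,\mu_s$ of $\mu$ coincide with rational functions $\phi_1,\dots,\phi_s$ of $Z$. Writing $\phi_i=F_i/G_i$ with $F_i,G_i\in\C[U]$ and $G_i\notin I(Z)=\mathfrak p$, and shrinking $\mathcal U$ — legitimate since $G_i\notin\mathfrak p$ — so that none of the $G_i$ vanishes on $\mathcal U$, one gets $\mu(z)=\big(F_1(z)/G_1(z),\dots,F_s(z)/G_s(z)\big)$ for every $z\in\mathcal U$.

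Finally I would produce $\psi$ and the disc. Because $G_i\notin\mathfrak p$ we have $G_i(u(\epsilon))\neq 0$, so $\psi_i(\epsilon):=F_i(u(\epsilon))/G_i(u(\epsilon))$ is a germ of meromorphic functions at $0$; put $\psi:=(\psi_1,\dots,\psi_s)$. As $Z\setminus\mathcal U$ is a proper Zariski closed subset of the irreducible variety $Z$, there is $H\in\C[U]$ with $H\notin\mathfrak p$ vanishing on $Z\setminus\mathcal U$; then $H(u(\epsilon))\neq 0$, i.e.\ $H\circ u\not\equiv 0$ near $0$, so the holomorphic function $c\mapsto H(u(c))$ has only isolated zeros. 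Shrinking $\Delta_0$ to a disc $\Delta$, I may assume $H(u(c))\neq 0$, hence $u(c)\in Z_P$ and $u(c)\notin Z\setminus\mathcal U$, that is $u(c)\in\mathcal U$, for all $c\in\Delta\setminus\{0\}$. For such $c$ this gives $\mu(u(c))=\big(F_1(u(c))/G_1(u(c)),\dots,F_s(u(c))/G_s(u(c))\big)=\psi(c)$; in particular $\psi$ is holomorphic on $\Delta\setminus\{0\}$, since the denominators are nonzero there, and $P(u(c))\neq 0$ by the choice of $\Delta_0$. This is exactly the assertion.

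The key point — and the only place where care is really needed — is that the germ $u(\epsilon)$ is Zariski dense in its own Zariski closure $Z$, so that no polynomial which is not identically zero on $Z$ (in particular none of the denominators $G_i$, nor the auxiliary $H$) vanishes at the germ; this simultaneously makes $\psi$ well defined and forces $u(c)$ into the rationality locus $\mathcal U$ for $c$ near $0$. The only routine technicality is the passage from germ-wise identities to identities valid on a fixed punctured disc, which is harmless because only finitely many polynomials enter (a set of generators of $\mathfrak p$, the $G_i$, and $H$), so one sufficiently small disc $\Delta$ serves all of them.
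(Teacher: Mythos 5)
Your proof is correct, and it takes a genuinely cleaner route at two points while agreeing with the paper on the crucial structural step (restrict to the Zariski closure of the germ and invoke Remark~\ref{remark piecewise rational}). First, the paper proves irreducibility of the Zariski closure $\mathcal{N}_0$ of $u(\Delta'\setminus\{0\})$ by a separate analytic claim, decomposing the punctured disc into preimages of the irreducible components of $\mathcal{N}_0$ and applying the Identity Theorem twice; you get the same variety $Z=V(\mathfrak p)$ (note $I(\mathcal N_0)=\mathfrak p$ precisely by the Identity Theorem, so the two constructions agree) with irreducibility for free, since the kernel of a ring homomorphism into a field is prime. Second, the paper defines $\psi(c):=\mu(u(c))$ and then argues that $\psi$ is holomorphic on $\Delta\setminus\{0\}$ via a somewhat indirect boundedness/continuity argument, whereas you extract from the rationality of $\mu|_{\mathcal U}$ an explicit presentation $\mu=(F_i/G_i)$ with $G_i\notin\mathfrak p$ and define $\psi_i:=F_i(u(\epsilon))/G_i(u(\epsilon))$ outright as a meromorphic germ; holomorphy away from $0$ is then immediate once the finitely many functions $G_i\circ u$ and $H\circ u$ (with $H\notin\mathfrak p$ vanishing on $Z\setminus\mathcal U$) are cleared on a small enough disc, which is exactly your isolated-zeros observation. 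Your version avoids the paper's auxiliary claim entirely and makes the meromorphy of $\psi$ self-evident; what you lose is the short self-contained continuity argument, but you replace it with the explicit rational formula, which is more informative. The one point worth flagging explicitly, which you do note, is that passing from germ identities to identities on a fixed punctured disc is legitimate precisely because only finitely many polynomials (generators of $\mathfrak p$, the $G_i$, and $H$) are involved.
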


\begin{proof}
There exists an open disc $\Delta'$ of $\C$ around the origin such
that $u(\epsilon)$ is everywhere defined on $\Delta' \setminus \{ 0
\}$ and such that any $c \in \Delta' \setminus \{ 0 \}$ satisfies
the condition $P(u(c))\neq 0$. Let $\mathcal{N}_0$ be the Zariski
closure of the image of $\Delta' \setminus \{ 0 \}$ under
$u(\epsilon)$.

\begin{claim}${\mathcal N}_0$ is irreducible.
\end{claim}
\begin{proof}[Proof of the claim]
Let $W_1, \ldots, W_l$ be the irreducible components of ${\mathcal
N}_0$ and let $C_1,\ldots, C_l$ be the pre-images of $W_1,\ldots,
W_l$ under the restriction of $u(\varepsilon)$ to
$\Delta'\setminus\{0\}$. Then $C_1,\ldots, C_l$ are analytic subsets
of the (connected) complex domain $\Delta'\setminus\{ 0\}$ which
satisfy the condition
\begin{equation}\label{irreducibilidad:eqn}
\Delta'\setminus\{0\}=C_1\cup \cdots \cup C_l.
\end{equation}
Each $C_j$ is defined by means of a finite number of equalities and
inequalities of holomorphic functions on $\Delta'\setminus\{0\}$.
The Identity Theorem for holomorphic functions implies that either
each point of $C_j$ is isolated or there exists a nonempty open
subset of $\Delta'\setminus\{0\}$ contained in $C_j$. Since
$\Delta'\setminus\{0\}$ is an open set and equality
(\ref{irreducibilidad:eqn}) holds, there exists an index $j$, $1\leq
j \leq l$, which satisfies the latter condition. Because
$\Delta'\setminus\{0\}$ is connected, the Identity Theorem shows
that $C_j=\Delta'\setminus\{0\}$. Therefore,
$u(C_j)=u(u^{-1}(W_j))\subseteq W_j$ and, thus, $W_j$ contains the
image of $\Delta'\setminus\{0\}$ under $u(\varepsilon)$. This
implies $W_j={\mathcal N}_0$.
\end{proof}

Since ${\mathcal N}_0$ is irreducible, by Remark \ref{remark
piecewise rational} there exists  a Zariski open and dense subset
$\mathcal{U}$ of $\mathcal{N}_0$ with
$\mathcal{U}\subset\mathcal{M}$ such that $\mu$ is rational and
everywhere defined on $\mathcal{U}$. Moreover there exists a
non--zero polynomial $Q\in\C[U]$ such that $\mathcal{N}_Q$ is
contained in $\mathcal{U}$ and Zariski dense in $\mathcal{N}_0$.
Therefore there exists a complex number $c_0\in \Delta'\setminus\{ 0
\}$ with $Q(u(c_0))\neq 0$. Replacing $\Delta'$ by a smaller open
disc whose closure is contained in $\Delta'$ we may use the same
arguments as in the proof of the claim above to show that without
loss of generality $K:=\{ c\in \Delta'\setminus\{ 0 \}: Q(u(c))=0
\}$ is finite. Thus we may choose an open disc $\Delta$ around the
origin with $\Delta \subset \Delta'$ and $\Delta \cap K =\emptyset$
such that the image of $\Delta$ under $u(\epsilon)$ is contained in
$\mathcal{N}_Q$. We conclude now that $u(\epsilon)$ is everywhere
defined on $\Delta\setminus\{ 0 \}$ and that every $c\in\Delta
\setminus \{ 0 \}$ satisfies the condition $u(c)\in\mathcal{U}$. For
$c\in\Delta\setminus\{ 0 \}$, let $\psi(c):=\mu(u(c))$. Then
$\psi:\Delta\setminus\{ 0 \} \to \C^m$ is a well--defined
meromorphic function. Let $c\in\Delta\setminus\{ 0 \}$ and let
$(c_k)_{k\in\N}$ be a sequence in $\Delta \setminus\{ 0 \}$ which
converges to $c$. Then the sequence $(u(c_k))_{k\in\N}$ of points of
$\mathcal{U}$ converges to $u(c)$ and hence the sequence
$(\mu(u(c_k)))_{k\in\N}$ converges to $\mu(u(c))$ and is therefore
bounded. This implies that $\psi$ is holomorphic at $c$. Thus $\psi$
is holomorphic on $\Delta \setminus\{ 0 \}$ and for any $c\in \Delta
\setminus\{ 0 \}$ we have $\psi(c)=\mu(u(c))$ and $P(u(c))\neq 0$.
Therefore we may interpret $\psi$ as a meromorphic map germ at the
origin which is holomorphic on $\Delta \setminus \{ 0 \}$.
\end{proof}

Let $u(\epsilon)$ be an approximative parameter instance for
$\theta$. Then following Lemma \ref{lemma open disc} there exists an
open disc $\Delta$ of $\C$ around the origin such that
$\mu(u(\epsilon))$ is meromorphic on $\Delta$ and holomorphic on
$\Delta \setminus \{ 0 \}$. Therefore the coefficients of the
polynomial $\theta(u(\epsilon)):=\omega(\mu(u(\epsilon)))$ with
respect to $X$ have the same property. Thus $\theta(u(\epsilon))$
can be interpreted as an element of $\C(\!(\epsilon)\!)[X]$.

We say that the approximative parameter instance $u(\epsilon)$ for
$\theta$ \emph{encodes} a polynomial $H\in\C[X]$ if there exists a
polynomial $H'\in\C[\![\epsilon]\!][X]$, whose coefficient vector
with respect to $X$ constitutes a germ of functions which are
holomorphic at the origin, such that $\theta(u(\epsilon))$ can be
written in $\C(\!(\epsilon)\!)[X]$ as
\[
\theta(u(\epsilon))=H + \epsilon H'.
\]
The mere existence of an encoding of a given polynomial by an
approximative parameter instance for $\theta$ becomes characterized
as follows.

\begin{theorem}
\label{aproximative theorem} Let notations and assumptions be as
before and let $H\in\C[X]$. Then the following conditions are
equivalent:
\begin{itemize}
\item[$(i)$] There exists an approximative parameter instance
for $\theta$ that encodes the polynomial $H$.
\item[$(ii)$] There exists a sequence $(u_k)_{k\in\N}$ in $\mathcal{M}$
such that $(\theta(u_k))_{k\in\N}$ converges to $H$ in $\C[X]$.
\item[$(iii)$] $H$ belongs to $\ol{\mathcal{O}}$.
\end{itemize}
\end{theorem}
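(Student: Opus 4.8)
The plan is to establish the cycle $(i)\Rightarrow(ii)\Rightarrow(iii)\Rightarrow(i)$; the first two implications are short, and the substance lies in $(iii)\Rightarrow(i)$.

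For $(i)\Rightarrow(ii)$ I would start from an approximative parameter instance $u(\epsilon)$ for $\theta$ encoding $H$ and invoke Lemma \ref{lemma open disc} to get an open disc $\Delta$ around $0$ on which $u(\epsilon)$ and $c\mapsto\mu(u(c))$ are holomorphic off the origin and $P(u(c))\neq 0$ for $c\in\Delta\setminus\{0\}$. For such $c$ the vanishing of $\mathfrak a$ at $u(c)$ together with $P(u(c))\neq 0$ gives $u(c)\in\overline{\mathcal M}_P\subseteq\mathcal M$, and there $\theta(u(c))=\omega(\mu(u(c)))$. Since $u(\epsilon)$ encodes $H$, $\theta(u(\epsilon))=H+\epsilon H'$ with $H'$ holomorphic at $0$, so $\theta(u(c))\to H$ coefficientwise; taking any $c_k\to 0$ in $\Delta\setminus\{0\}$ and $u_k:=u(c_k)$ yields the desired sequence. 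For $(ii)\Rightarrow(iii)$ I would simply note that $\mathcal O$ is a constructible subset of a finite--dimensional $\C$--vector space, so the limit of the sequence $(\theta(u_k))_{k\in\N}$ of its elements lies in the Euclidean closure of $\mathcal O$, which equals its Zariski closure $\overline{\mathcal O}$.

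The heart of the proof is $(iii)\Rightarrow(i)$. First, since $\theta=\omega\circ\mu$ is geometrically robust and constructible (Theorem \ref{theorem geometrically robust functions}), it is Euclidean continuous on $\mathcal M$ by Theorem \ref{theo continuos}; as $\overline{\mathcal M}_P$ is dense in $\mathcal M$, it follows that $\mathcal O=\theta(\mathcal M)\subseteq\overline{\theta(\overline{\mathcal M}_P)}$, so $H\in\overline{\mathcal O}=\overline{\theta(\overline{\mathcal M}_P)}$, the closure of the image of the \emph{morphism} $\theta|_{\overline{\mathcal M}_P}$. I would then run a curve selection argument inside the irreducible variety $\overline{\mathcal M}$: pick an irreducible affine curve $C\subseteq\overline{\mathcal O}$ with $H\in C$ and $C\cap\theta(\overline{\mathcal M}_P)$ Zariski dense in $C$; choose an irreducible curve $D\subseteq\overline{\mathcal M}_P\cap\theta^{-1}(C)$ dominating $C$; replace $D$ by its closure in $\overline{\mathcal M}$ and let $\nu\colon\widetilde D\to D$ be the normalization. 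The composite $\theta\circ\nu$ is a rational map of $\widetilde D$ into the curve $C$; lifting it to the normalization of $C$ and extending to the smooth projective model of $\widetilde D$ gives a non--constant, hence surjective, morphism, so some point $\widetilde D_0$ of that model maps to the point lying over $H$. Expanding the coordinate functions of $\nu$ in a local parameter $\epsilon$ at $\widetilde D_0$ produces $u(\epsilon)\in\C((\epsilon))^r$. By construction $u(\epsilon)$ satisfies $\mathfrak a$ identically (it traces out a punctured--disc worth of points of $D\subseteq\overline{\mathcal M}$), $P\circ\nu$ is a nonzero rational function on $\widetilde D$ so $P(u(\epsilon))\neq 0$ in $\C((\epsilon))$, and $\theta\circ\nu$ extends \emph{holomorphically} to $\widetilde D_0$ with value $H$ because $H$ is a finite point of $\overline{\mathcal O}$. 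Hence $u(\epsilon)$ is an approximative parameter instance, $\mu(u(\epsilon))$ is meromorphic at $0$ by Lemma \ref{lemma open disc}, and $\theta(u(\epsilon))=\omega(\mu(u(\epsilon)))$ is holomorphic at $0$ with constant term $H$, i.e.\ $\theta(u(\epsilon))=H+\epsilon H'$ with $H'$ holomorphic at the origin, so $u(\epsilon)$ encodes $H$.

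I expect the curve selection step to be the only genuine obstacle. The delicate points are that $D$ must actually meet $\overline{\mathcal M}_P$ so that $P$ does not vanish identically along it, and above all that the base point $\widetilde D_0$ can be chosen to lie over $H\in\overline{\mathcal O}$ --- forcing $\theta$ along the arc to converge, with value exactly $H$, rather than escaping to infinity --- while permitting $u(\epsilon)$ itself to acquire a pole, which is harmless since the definition allows arbitrary meromorphic map germs. Once $\widetilde D_0$ is in place, checking each clause of the definitions of ``approximative parameter instance'' and ``encodes'' for the resulting germ is routine.
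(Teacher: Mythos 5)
Your overall architecture matches the paper's: prove $(i)\Rightarrow(ii)$, observe that $(ii)\Leftrightarrow(iii)$ is just a restatement, and put the work into $(iii)\Rightarrow(i)$ by selecting a curve $C\ni H$ in $\ol{\mathcal O}$, lifting it to a curve in $\ol{\mathcal M}$ dominating $C$, passing to smooth projective models so that the rational map extends to a finite (hence surjective) morphism, and reading off a meromorphic germ $u(\epsilon)$ from a local parameter at a preimage $\widetilde D_0$ of $H$. The paper's proof also hinges on $P\circ\nu\not\equiv 0$ (so $P(u(\epsilon))\neq 0$) and on holomorphy of $\theta\circ\nu$ at $\widetilde D_0$, and your final verification via Lemma \ref{lemma open disc} and $H-\theta(u(\epsilon))$ lying in the maximal ideal at $H$ agrees with the paper.

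The genuine gap is exactly the one you flag as ``the only genuine obstacle'': the two curve-selection steps are asserted but not established, and they are not freebies. You cannot simply ``pick'' an irreducible curve $C\subseteq\ol{\mathcal O}$ through $H$ with $C\cap\theta(\ol{\mathcal M}_P)$ Zariski dense --- if $H$ lies in the bad locus $B:=\ol{\ol{\mathcal O}\setminus\theta(\ol{\mathcal M}_P)}$, a carelessly chosen curve could lie entirely inside $B$, and then $P\circ\nu$ would vanish identically. The paper handles this with Noether normalization $\lambda:\ol{\mathcal O}\to\A^q$: choose $z\notin\lambda(B)$, take the line $L$ through $\lambda(H)$ and $z$, and let $C$ be an irreducible component of $\lambda^{-1}(L)$ containing $H$; finiteness of $\lambda$ makes $C$ a curve meeting $B$ in finitely many points. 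The same issue recurs when lifting: a curve $D$ inside $\theta^{-1}(C)\cap\ol{\mathcal M}_P$ ``dominating $C$'' must be shown to exist, which the paper does with a \emph{second} Noether normalization of a suitable irreducible component $V$ of $\ol{\theta^{-1}(C)}$, choosing the line to avoid $\lambda^*(B^*)$ with $B^*:=\ol{\theta^{-1}(\theta(u))}\cap V$, precisely so that $\theta$ restricted to the lifted curve is non-constant. Without some argument of this kind (Noether normalization, or a Bertini-type generic hyperplane section that you would still need to show produces an \emph{irreducible} curve avoiding the bad loci), the proof is incomplete at its central step. Once that is supplied, the rest of your write-up is correct and essentially identical to the paper's, which in turn adapts \cite{Ald84}.
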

\begin{proof}
The conditions $(ii)$ and $(iii)$ are obviously equivalent because
one is only a restatement of the other. It suffices therefore to
show the implications $(i)\Rightarrow (ii)$ and
$(ii)+(iii)\Rightarrow (i)$. We first prove $(i)\Rightarrow (ii)$.

Suppose that there exists an approximative parameter instance
$u(\epsilon)$ for $\theta$ such $u(\epsilon)$ encodes $H\in\C[X]$ by
means of a polynomial $H'\in\C[\![\epsilon]\!][X]$ whose coefficient
vector constitutes a holomorphic map germ at the origin. Thus we
have $\theta(u(\epsilon))= H + \epsilon H'$ in
$\C(\!(\epsilon)\!)[X]$. We may choose a sequence
$(\epsilon_k)_{k\in\N}$ of non--zero complex numbers converging to
zero such that for any $k\in\N$ the germ $u(\epsilon)$ is
holomorphic at $\epsilon_k$ and satisfies the condition
$P(u(\epsilon_k))\neq 0$. Without loss of generality we may suppose
that the coefficients of $H'$ are holomorphic at $\epsilon_k$ for
any $k\in\N$.

For $k\in\N$ let $u_k:=u(\epsilon_k)$. Then $u_k$ belongs to
$\mathcal{M}$ and $(\theta(u_k))_{k\in\N}=(H(X)+\epsilon_k
H'(\epsilon_k , X))_{k\in\N}$ converges to $H$. Therefore condition
$(ii)$ is satisfied.

The remaining implication is more cumbersome. In the proof below we
adapt the argumentation of \cite[Lemmas 1 and 2]{Ald84} to our
context.

By assumption $\ol{\mathcal{M}}$ is an irreducible affine variety.
Observe that $\theta(\ol{\mathcal{M}}_{P})$ is Zariski dense in
$\ol{\mathcal{O}}$, because $\theta$ is continuous with respect to
the Euclidean and the Zariski topologies of $\mathcal{M}$ and
$\mathcal{O}$ (see Theorem \ref{theo continuos} and Lemma \ref{lemma
Phi} $(ii)$). Thus $B:=\ol{\ol{\mathcal{O}}
\setminus\theta(\ol{\mathcal{M}}_P)}$ is a proper Zariski closed
subset of $\ol{\mathcal{O}}$. Let $q$ be the dimension of the
irreducible affine variety $\ol{\mathcal{O}}$. By assumption we have
$H\in\ol{\mathcal{O}}$. If $q=0$, then $H\in \mathcal{O}$ and we are
done. Therefore we may suppose without loss of generality $q>0$.

By Noether's Normalization Lemma there exists a surjective finite
morphism of irreducible affine varieties
$\lambda:\ol{\mathcal{O}}\to\A^q$. Since $B$ is a proper Zariski
closed subset of $\ol{\mathcal{O}}$ we have $\lambda(B)\subsetneqq
\A^q$. We may therefore choose a point $z\in\A^q \setminus
\lambda(B)$. Let $L$ be a straight line of $\A^q$ which passes
through $\lambda(H)$ and $z$. Then $\lambda(H)$ belongs to $L$ and
$\lambda(B)\cap L$ is a finite set. Since the morphism $\lambda$ is
finite, the irreducible components of $\lambda^{-1}(L)$ are all
closed curves of $\ol{\mathcal{O}}$ which become mapped onto $L$ by
$\lambda$. In particular, there exists an irreducible component $C$
of $\lambda^{-1}(L)$ which contains $H$. Since $\lambda(B)\cap L$ is
finite, we have $C\nsubseteq B$. Therefore $C\cap B$ is also finite.
Suppose that $\theta(\ol{\mathcal{M}}_{P}) \cap C$ is finite. Then
we may conclude that $C \cap B$ is infinite, a contradiction.
Therefore $\theta(\ol{\mathcal{M}}_P) \cap C$ is infinite. Hence the
Zariski closure of $\theta^{-1}(C)$ in $\ol{\mathcal{M}}$ contains
an irreducible component $V$ such that the constructible set
$\theta(V_P)$ is Zariski dense in $C$. Let $q^*:=\dim V$ and $u \in
V_P$. Observe that $q^* >0$ and, by the continuity of $\theta$,
$B^*:=\ol{\theta^{-1}(\theta(u))} \cap V$ is a proper Zariski closed
subset of $V$.

Again by Noether's Normalization Lemma there exists a surjective
finite morphism of irreducible affine varieties $\lambda^*:V \to
\A^{q^*}$. Since $B^*$ is a proper Zariski closed subset of $V$ we
have $\lambda^*(B^*) \subsetneqq \A^{q^*}$. Therefore we may choose
again a point $z^* \in\A^{q^*} \setminus \lambda^*(B^*)$ and a
straight line $L^*$ of $\A^{q^*}$ which passes through
$\lambda^*(u)$ and $z^*$. Thus $\lambda^*(B^*) \cap L^*$ is a finite
set and $\lambda^*(u)$ belongs to $L^*$. The irreducible components
of $(\lambda^*)^{-1}(L^*)$ are all closed curves of $V$ which become
mapped onto $L^*$ by $\lambda^*$. In particular, there exists an
irreducible component $C^*$ of $(\lambda^*)^{-1}(L^*)$ which
contains $u$. Since $\lambda^*(B^*) \cap L^*$ is finite we conclude
$C^* \nsubseteq B^*$. Moreover, as $C^*$ is irreducible, $u\in C^*$
and $u\in V_P$, we infer that $C^*_P$ is Zariski dense in $C^*$.
Hence $C^* \nsubseteq B^*$ implies that there exists a point $u^*\in
C^*_P \setminus B^*$. By definition of $B^*$, $\theta(u^*) \neq
\theta(u)$. Moreover, we deduce from Lemma \ref{lemma Phi}$(iii)$
that the constructible set $\theta(C^*_P)$ is irreducible. Therefore
$\theta(C^*_P)$ is Zariski dense in $C$.

In this way we have found two irreducible closed curves
$C^*\subseteq\ol{\mathcal{M}}$ and $C\subseteq\ol{\mathcal{O}}$ with
$C^*_P$ nonempty such that $\theta$ maps $C^*_P$ into $C$ and
$\theta(C^*_P)$ is Zariski dense in $C$. Moreover, $H\in C$. The
restriction of $\theta$ to $C^*_P$ is, by assumption on $P$, a
well--defined rational map with Zariski dense image in $C$.
Therefore, the geometrically robust constructible map $\theta$
induces a finite field extension $\C(C) \subset \C(C^*)$.

Let $D^*$ be the normalization of the projective closure of $C^*$
and let $D$ be the projective closure of $C$. Then $\theta$ induces
a rational map $\theta^*:D^*\dashrightarrow D$ whose image is dense
in $D$. Since $\theta^*$ is a smooth curve and $D$ is projective,
$\theta^*$ is a regular map (see, e.g., \cite[\S II.3, Corollary
1]{Shafa94}). The situation is depicted in the following diagram:
\begin{center}
\begin{tikzpicture}
 \node[shape=rectangle] (v1) at (0,2) {$C^*$};
 \node[shape=rectangle] (v2) at (0,0) {$D^*$};
 \node[shape=rectangle] (v3) at (2,2) {$C$};
 \node[shape=rectangle] (v4) at (2,0) {$D$};
 \path[color=black]
 (v1) edge[dashed,->] (v2)
 (v1) edge[dashed,->] node[above] {$\theta$} (v3)
 (v3) edge[->] (v4)
 (v2) edge[->] node[above] {$\theta^*$} (v4);
\end{tikzpicture}
\end{center}

As $\mathrm{im}(\theta^*)$ is dense in $D$, we conclude that
$\theta^*$ is surjective. Hence there exists a point $\eta$ of $D^*$
with $\theta^*(\eta)=H$.
Let 
$\mathcal{S}:=\mathcal{O}_{D^*,\eta}$ be the local ring of $D^*$ at
$\eta$. Thus $\mathcal{S}$ is a regular $\C$--algebra of dimension
one and therefore there exists an embedding of $\mathcal{S}$ into a
power series ring $\C[\![\epsilon]\!]$ which maps any generator of
the maximal ideal of $\mathcal{S}$ onto a power series of order one.
Moreover, by the Jacobian criterion and the Implicit Function
Theorem, the elements of $\mathcal{S}$ become mapped onto power
series which constitute holomorphic function germs at the origin.
Hence the coordinate functions of $C^*$ determined by the
restrictions of the canonical projections $\A^r\rightarrow \A^{1}$
to $C^*$ can be represented by Laurent series
$u_1(\epsilon),\dots,u_r(\epsilon)$ of $\C(\!(\epsilon)\!)$ which
constitute meromorphic function germs at the origin. Let
$u(\epsilon):=(u_1(\epsilon),\dots,u_r(\epsilon))$. Then we deduce
$P(u(\epsilon))\neq 0$ from the fact that $C^*_{P}$ is Zariski dense
in $C^*$ and $\theta(u(\epsilon))$ is a well--defined meromorphic
map germ at the origin by the same argument as in the proof of Lemma
\ref{lemma open disc}.

Furthermore, we have $\theta^*=\theta(u(\epsilon))$, which implies
that $\theta(u(\epsilon))$ admits a holomorphic extension to
$\epsilon=0$. In particular, the entries of the vector
$\theta(u(\epsilon))$ are power series of $\C[\![\epsilon]\!]$ which
constitute holomorphic function germs at the origin. Moreover $H -
\theta(u(\epsilon))$ belongs to the maximal ideal of the local ring
of $\C[D]$ at $H$ and hence to that of $\mathcal{S}$. This means
that $\epsilon$ divides the entries of the coefficient vector of $H
-\theta(u(\epsilon))$ in $\C[\![\epsilon]\!]$. We conclude now that
there exists a polynomial $H'\in\C[\![\epsilon]\!][X]$, whose
coefficients constitute holomorphic function germs at the origin,
such that the equality $\theta(u(\epsilon))=H + \epsilon H'$ holds
in $\C(\!(\epsilon)\!)[X]$. Since $C^*$ is contained in
$\ol{\mathcal{M}}$ we have finally $A(u(\epsilon))=0$ for any
polynomial $A\in\mathfrak{a}$. Thus $u(\epsilon)$ is an
approximative parameter instance for $\theta$ that encodes the
polynomial $H$.
\end{proof}

Theorem \ref{aproximative theorem} suggests that we may consider a
(not necessarily convergent) sequence $(u_k)_{k\in\N}$ in
$\mathcal{M}$ such that $(\theta(u_k))_{k\in\N}$ converges in the
Euclidean topology to a polynomial $H\in\C[X_1,\dots,X_n]$ as an
\emph{approximative encoding} of $H$ with respect to $\theta$. This
motivates the approximative quiz game of Section \ref{subsec:
approximative game}.

A symbolic variant of Theorem \ref{aproximative theorem} for the
representation of polynomials by robust arithmetic circuits with
parameter domain $\A^r$ is the main technical contribution of
\cite{Ald84} (see also \cite[\S A]{Lic90}).
%
%

\begin{thebibliography}{CGH{\etalchar{+}}03}

\bibitem[Ald84]{Ald84}
A.~Alder, \emph{Grenzrang und {G}renzkomplexit{\"a}t aus
algebraischer und
  topologischer sicht}, Ph.D. thesis, Universit{\"a}t Z{\"u}rich, 1984.

\bibitem[BCS97]{Burgisser97}
P.~B\"urgisser, M.~Clausen, and M.A. {Shokrollahi}, \emph{Algebraic
complexity
  theory}, Grundlehren Math. Wiss., vol. 315, Springer, Berlin, 1997.

\bibitem[CGH{\etalchar{+}}03]{CaGiHeMaPa03}
D.~Castro, M.~Giusti, J.~Heintz, G.~Matera, and L.M. Pardo,
\emph{The hardness
  of polynomial equation solving}, Found. Comput. Math. \textbf{3} (2003),
  no.~4, 347--420.

\bibitem[Eis95]{Eisenbud95}
D.~Eisenbud, \emph{Commutative algebra with a view toward algebraic
geometry},
  Grad. Texts in Math., vol. 150, Springer, New York, 1995.

\bibitem[FJ05]{FrJa05}
M.~{Fried} and M.~{Jarden}, \emph{Field arithmetic}, 2nd ed.,
Springer, Berlin,
  2005.

\bibitem[GH01]{GiHe01}
M.~Giusti and J.~Heintz, \emph{Kronecker's smart, little
black--boxes},
  Proceedings of Foundations of Computational Mathematics, FoCM'99, Oxford 1999
  (Cambridge) (A.~Iserles R.~Devore and E.~S\"uli, eds.), London Math. Soc.
  Lecture Note Ser., vol. 284, Cambridge Univ. Press, 2001, pp.~69--104.

\bibitem[GHMS11]{GiHeMaSo11}
N.~Gim\'enez, J.~Heintz, G.~Matera, and P.~Solern\'o, \emph{Lower
complexity
  bounds for interpolation algorithms}, J. Complexity \textbf{27} (2011),
  no.~2, 151--187.

\bibitem[Gri86]{Griesser86}
B.~Griesser, \emph{Lower bounds for the approximative complexity},
Theoret.
  Comput. Sci. \textbf{46} (1986), 329--338.

\bibitem[HDB96]{Hagan}
M.~Hagan, H.~Demuth, and M.~Beale, \emph{Neural network design}, PWS
Publishing
  Company, Boston, MA, 1996.

\bibitem[HKP91]{Hertz91}
J.~Hertz, A.~Krogh, and R.~Palmer, \emph{Introduction to the theory
of neural
  computation}, Addison--Wesley, Reading, Massachusetts, 1991.

\bibitem[HKP99]{Haykin}
S.~Haykin, A.~Krogh, and R.~Palmer, \emph{Neural networks, a
comprehensive
  foundation}, second ed., Prentice Hall, Upper Saddle River, NJ, 1999.

\bibitem[HKR13a]{HKRPSantalo}
J.~Heintz, B.~Kuijpers, and A.~{Rojas Paredes}, \emph{On the
intrinsic
  complexity of elimination problems in effective algebraic geometry}, Recent
  advances in real complexity and computation (Providence, RI) (J.~L.
  {Monta\~na} and L.~M. {Pardo}, eds.), Contemp. Math., vol. 604, Amer. Math.
  Soc., 2013, pp.~129--150.

\bibitem[HKR13b]{HeKuRo}
\bysame, \emph{Software engineering and complexity in effective
algebraic
  geometry}, J. Complexity \textbf{29} (2013), no.~1, 92--138.

\bibitem[Ive73]{Iversen73}
B.~Iversen, \emph{Generic local structure of the morphisms in
commutative
  algebra}, Lecture Notes in Math., vol. 310, Springer, New York, 1973.

\bibitem[Kun85]{Kunz85}
E.~Kunz, \emph{Introduction to commutative algebra and algebraic
geometry},
  Birkh{\"{a}}user, Boston, 1985.

\bibitem[Lic90]{Lic90}
T.~M. Lickteig, \emph{On semialgebraic decision complexity},
Technical Report
  {TR}-90-052, {I}nt. {C}omput. {S}ci. {I}nst., {B}erkeley, 1990.

\bibitem[Mey97]{Meyer97}
B.~Meyer, \emph{Object-oriented software construction}, second ed.,
  Prentice-Hall, Inc., Upper Saddle River, NJ, 1997.

\bibitem[Mum88]{Mumford88}
D.~Mumford, \emph{The red book of varieties and schemes}, 1st ed.,
Lecture
  Notes in Math., vol. 1358, Springer, New York, 1988.

\bibitem[Sha94]{Shafa94}
I.~R. Shafarevich, \emph{Basic algebraic geometry: Varieties in
projective
  space}, Springer, Berlin Heidelberg New York, 1994.

\end{thebibliography}

\newcommand{\etalchar}[1]{$^{#1}$}
\providecommand{\bysame}{\leavevmode\hbox
to3em{\hrulefill}\thinspace}
\providecommand{\MR}{\relax\ifhmode\unskip\space\fi MR }
\providecommand{\MRhref}[2]{%
  \href{http://www.ams.org/mathscinet-getitem?mr=#1}{#2}
} \providecommand{\href}[2]{#2}

\end{document}